\newtheorem{theorem}{Theorem}[section]
\newtheorem{definition}{Definiton}[section]
\newtheorem{lemma}{Lemma}[section]
\newtheorem{proposition}{Proposition}[section]
\newtheorem{corollary}{Corollary}[section]
\theoremstyle{remark}
\newtheorem{remark}{Remark}[section]
\begin{document}
\title[Equivariant CR minimal immersions from $S^3$ into $\mathbb CP^n$]
{Equivariant CR minimal immersions from $S^3$\\ into $\mathbb CP^n$}

\author{Zejun Hu, Jiabin Yin}
\address{%
School of Mathematics and Statistics, Zhengzhou University, Zhengzhou 450001, People's Republic of China}
\email{huzj@zzu.edu.cn; welcomeyjb@163.com}
\author{Zhenqi Li}
\address{%
Department of Mathematics, Nanchang University, Nanchang 330047, People's Republic of China}
\email{zhenqili@263.net}

\thanks{2010 {\it Mathematics Subject Classification.} Primary 53C24; Secondary 53C42, 53C55.}
\thanks{The first two authors were supported by grants of NSFC-11371330 \& 11771404, and the third author
was supported by grants of NSFC-11361041.}

\date{}

\keywords{Complex projective space, equivariant immersion, minimal
immersion, CR type immersion, Berger sphere.}

\begin{abstract}
The equivariant CR minimal immersions from the round $3$-sphere
$S^3$ into the complex projective space $\mathbb CP^n$ have been
classified by the third author explicitly (J London Math Soc 68:
223-240, 2003). In this paper, by employing the equivariant
condition which implies that the induced metric is left-invariant,
and that all geometric properties of $S^3={\rm SU}(2)$ endowed with
a left-invariant metric can be expressed in terms of the structure
constants of the Lie algebra $\mathfrak{su}(2)$, we establish an
extended classification theorem for equivariant CR minimal
immersions from the $3$-sphere $S^3$ into $\mathbb CP^n$ without the
assumption of constant sectional curvatures.
\end{abstract}

\maketitle
\numberwithin{equation}{section}
\section{Introduction}\label{sect:1}
\abovedisplayskip3pt plus2pt minus2pt \belowdisplayskip3pt plus2pt minus2pt\parskip1pt

Let $\mathbb CP^n$ denote the $n$-dimensional complex projective space with
almost complex structure $J$ and constant holomorphic sectional
curvature $4$. It is known that there are considerable researches
for the submanifolds of $\mathbb CP^n$, e.g. we would mention the study of
its holomorphic, or (parallel) Lagrangian submanifolds in
\cite{C-D-V-V-1,DLVW,Na,N1,N2,O}, among a great many others. In
particular, the interesting and important article of
Bolton-Jensen-Rigoli-Woodward \cite{B-J-R-W}, which, by drawing upon
Wolfson's notion of a harmonic sequence and also exhibiting a number
of prominent features of the related foundational work of Boruvka,
Calabi, Chern, Eells, Wood and Lawson (see references as cited
therein), had deeply investigated the conformal minimal immersions
of the $2$-sphere ${S}^2$ into $\mathbb CP^n$ with constant
curvature and constant K\"ahler angle. As a matter of fact, the
paper \cite{B-J-R-W} had brought enormous impact for subsequent
research.

To extend the research of \cite{B-J-R-W}, and as a counterpart of
Mashimo \cite{Ma} studying the immersions from $S^3$ into
$S^n$, a variety of immersions of $S^3$ into $\mathbb CP^n$ have been
considered. For related references we refer to
\cite{C-D-V-V,F-P,H-Li,Li,L-H,L-P,L-T,L-V-W}, among which the paper
of the third author \cite{Li} is of fundamental importance for us.

Following \cite{Be}, an immersed submanifold $\varphi:{M}\rightarrow{\mathbb CP^n}$
is called a {\it CR-submanifold} if $TM$ can be decomposed into an
orthogonal direct sum $TM=V_1\oplus V_2$ such that $JV_1\subset
T^\perp M$ and $JV_2=V_2$. In this case the immersion $\varphi$ is said
to be {\it CR type}.

Recall that $S^3=\{(z,w)\in\mathbb C^2|z\bar{z}+w\bar{w}=1\}$ can be
identified with the Lie group $SU(2)$ in a standard way. By
definition in \cite{Li}, a mapping $\varphi:{S^3=SU(2)}\rightarrow\mathbb CP^n$ is
said to be {\it equivariant} if $\varphi$ is compatible with the
structure of Lie group $SU(2)$. For details see Sect.\ \ref{sect:3}.

In \cite{Li}, among many others, the third author studied
equivariant CR minimal $S^3$ immersed in $\mathbb CP^n$ with constant
sectional curvature $c$ (Theorem 6.1 in \cite{Li}). It was proved
that if such an immersion $\varphi:S^3\rightarrow\mathbb CP^n$ is linearly full,
then $n=2m^2-3$ for some integer $m\geq 2$ and $c=1/(m^2-1)$.
Moreover, up to an isometry of $S^3$ and a holomorphic isometry
of $\mathbb CP^n$, the immersion $\varphi$ is exactly the immersion defined in
Example 4.4 of \cite{Li}. In \cite{L-H}, Li and Huang made an
interesting advance by showing that any minimal CR $3$-sphere in
$\mathbb CP^n$ is actually equivariant if the induced metric of $S^3$ has
constant sectional curvature.

In this paper, following closely \cite{Li}, we continue with the
study of equivariant CR minimal immersion from $S^3$ into $\mathbb CP^n$
but without the assumption that the induced metric on $S^3$ has
constant sectional curvatures. By employing the equivariant
condition, Proposition 4.2 of \cite{Li} which implies that the
induced metric is left-invariant, and that all geometric properties
of $S^3={\rm SU}(2)$ endowed with a left-invariant metric can be
expressed in terms of the structure constants of the Lie algebra
$\mathfrak{su}(2)$, we can finally overcome the difficulty brought
by the missing of constant sectional curvatures condition to achieve
the goal of a new classification theorem. Our main results can be
summarized as follows:
\begin{theorem}\label{thm:1.1}
Let $\varphi: {S^3}\rightarrow{\mathbb C}P^{n}\ (n\ge2)$ be
a linearly full equivariant CR minimal immersion with induced
metric $ds^2$. Then, up to an inner
automorphism of ${\rm SU}(2)$ and a holomorphic isometry of
$\mathbb{C}P^n$, the immersion $\varphi$ can be expressed as
as one of the following three immersions:
\begin{enumerate}
\item[(1)]
$n=2$, and $(S^3,ds^2)$ is not a Berger sphere with
$$
\varphi(z,w)=[\cos\tfrac{\pi}8(z^2,\sqrt{2}zw,w^2)
+\sqrt{-1}\sin\tfrac{\pi}8(\bar{w}^2,-\sqrt{2}\bar{z}\bar{w},\bar{z}^2)].
$$
\item[(2)]
$n\ge2$, and $(S^3,ds^2)$ is a Berger sphere with
$$
\varphi(z,w)=\big[\cos t \sum_{\alpha=0}^k\sqrt{C_{\alpha}^k}z^{k-\alpha}w^\alpha\varepsilon_\alpha+
\sqrt{-1}\sin t\sum_{\alpha'=0}^{\ell}\sqrt{C_{\alpha'}^\ell}z^{\ell-\alpha'}w^{\alpha'}\varepsilon'_{\alpha'}\big],
$$
where $t\in (0,\pi/2)$ is determined by, for nonnegative integer
$k,\ell$ satisfying $k-\ell>0$ and $k+\ell+1=n$, $ \tan^2
t=2k/\big[3(k-\ell)+\sqrt{(k+\ell)^2+8(k-\ell)^2}\big],$
and
$\{\varepsilon_0,\ldots,\varepsilon_k,\varepsilon_0',\ldots,\varepsilon_{\ell}'\}$
is the natural basis of $\mathbb{C}^{n+1}=\mathbb
C^{k+1}\oplus\mathbb C^{\ell+1}$. In particular, if
$\tan^2t=k/(2k-\ell+4)$, $(S^3,ds^2)$ has constant sectional
curvature $c=1/(m^2-1)$ for some integer $m=(k-\ell)/2\geq2$.
\end{enumerate}
\end{theorem}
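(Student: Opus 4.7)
The plan is as follows. The starting point is equivariance, which by Proposition 4.2 of \cite{Li} forces the induced metric $ds^2$ to be left-invariant on $SU(2)$, so $ds^2$ is characterized by three positive eigenvalues $\lambda_1,\lambda_2,\lambda_3$ in a suitable orthogonal basis of $\mathfrak{su}(2)$. At the same time, equivariance provides a lift $\tilde\varphi\colon SU(2)\to S^{2n+1}\subset\mathbb C^{n+1}$ of the form $\tilde\varphi(g)=\rho(g)v_0$ for a unitary representation $\rho$ of $SU(2)$ on $\mathbb C^{n+1}$ and a fixed unit vector $v_0$. Linear fullness means the $\rho$-orbit of $v_0$ spans $\mathbb C^{n+1}$, so $v_0$ has a nonzero component in every irreducible summand of the decomposition $\mathbb C^{n+1}=\bigoplus_j V_{k_j}$, where $V_k$ denotes the irreducible $SU(2)$-module of degree-$k$ homogeneous polynomials in $z,w$.

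Next I would pick a left-invariant orthonormal frame $\{e_1,e_2,e_3\}$ of $S^3$ adapted to the CR splitting $TM=V_1\oplus V_2$: take $e_3$ spanning $V_1$ (so $Je_3\in T^\perp M$) and $\{e_1,e_2\}$ spanning $V_2$ with $Je_1=e_2$. The commutation relations $[e_i,e_j]=c_{ij}^k e_k$ in $\mathfrak{su}(2)$ and the unknowns $\lambda_i$ encode both the structure equations and whether $ds^2$ is Berger. I would then compute the differential $\varphi_*(e_i)$, the second fundamental form $h$, and the mean curvature vector $H$ as algebraic expressions in $v_0$ and $d\rho$, all indexed by weights of $\rho$. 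The CR identity $J\varphi_*(e_1)=\varphi_*(e_2)$ on the tangential part, together with $J\varphi_*(e_3)\perp\varphi_*(TM)$ on the normal part, becomes a system of linear identities on $v_0$ relative to the weight-space decomposition of $\rho$.

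The key algebraic consequence to extract from the CR condition is that the cyclic vector $v_0$ can be supported in at most two irreducible summands of $\mathbb C^{n+1}$; more precisely, $v_0=\cos t\cdot u_k+\sqrt{-1}\sin t\cdot u_\ell$ with $u_k\in V_k$ and $u_\ell\in V_\ell$ of the highest-weight form displayed in the theorem, where $k>\ell$ and the imaginary factor is forced by the orthogonality $\langle J\varphi_*(e_3),\varphi_*(TM)\rangle=0$. Imposing minimality $H=0$ then produces equations coupling $\tan^2t$, the weights $k,\ell$, and the metric eigenvalues $\lambda_i$. The generic branch yields $\lambda_1=\lambda_2$ (Berger) together with the stated formula for $\tan^2t$, giving family (2); the exceptional branch is compatible only when $k+\ell=2$, i.e.\ $n=2$, producing the explicit immersion in (1). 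The final specialization $\tan^2t=k/(2k-\ell+4)$ for $m=(k-\ell)/2\geq 2$ reduces to the constant-curvature case of \cite{Li}, providing a consistency check.

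The main obstacle I anticipate is carrying through the algebra without the simplification afforded by constant sectional curvature. In \cite{Li} a single curvature parameter collapses the structure equations, whereas here $\lambda_1,\lambda_2,\lambda_3$ are a priori independent and the system arising from $H=0$ and the CR identities is substantially larger. The heart of the argument is to show that this enlarged system is nonetheless rigid enough to permit only the two branches above; concretely, one must rule out (i) representations with three or more irreducible summands in the support of $v_0$, and (ii) solutions in which none of $\lambda_1=\lambda_2$, $k+\ell=2$ hold. Both exclusions should emerge from comparing the weight-by-weight expansion of $H$ with the constraints dictated by the CR splitting.
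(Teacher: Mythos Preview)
Your representation-theoretic strategy is a genuinely different route from the paper's. The paper works entirely with moving frames: it writes the structure equations \eqref{eqn:3.7} for an adapted unitary frame $\{e_0,e_A\}$, extracts from the CR and minimality conditions explicit constraints on the Lie-algebra structure constants $c_{ij}^k$ (Lemma~\ref{lem:4.1}, Corollary~\ref{cor:4.1}, Proposition~\ref{prop:4.1}, Lemma~\ref{lem:4.2}), and obtains a trichotomy indexed by whether the distinguished normal vector $\xi_0=J\varphi_*(X_1)$ is parallel (Theorem~\ref{thm:4.1}). Only at the very end does it invoke a representation-theoretic fact (Lemma~\ref{lem:6.1}) to translate the differential data $\bar Z'e_0=0$, $X'_1e_0=ise_0$ into the explicit polynomial form of $e_0$. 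Your plan reverses the order, starting from the $SU(2)$-module decomposition of $\mathbb C^{n+1}$ and reading off constraints weight-by-weight.

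There is, however, a genuine gap in your ansatz that would cause you to miss case~(1). You posit that $v_0=\cos t\,u_k+i\sin t\,u_\ell$ with $u_k\in V_k$ and $u_\ell\in V_\ell$ both of \emph{highest-weight} type in \emph{distinct} irreducible summands, and you describe the exceptional branch as ``$k+\ell=2$, i.e.\ $n=2$'' (note also $k+\ell+1=n$, so $k+\ell=2$ gives $n=3$, not $2$). But the non-Berger example does not fit this shape at all: there $\mathbb C^3$ is the \emph{single} irreducible $V_2$, and
\[
e_0=\cos\tfrac\pi8\,f_0+i\sin\tfrac\pi8\,f_2
\]
combines the highest-weight vector $f_0$ with the \emph{lowest}-weight vector $f_2=\tfrac12 Z'^2 f_0$ of the same module (see \eqref{eqn:5.4} and Lemma~\ref{lem:5.1}). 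So the CR condition does not force $v_0$ onto highest-weight lines, and the correct dichotomy is not ``two irreducible summands versus one'' but rather ``$v_0$ supported on the highest-weight lines of $V_k\oplus V_\ell$ (Berger)'' versus ``$v_0$ a specific mix of extremal weight vectors inside a single $V_2$ (non-Berger).'' The paper detects this split not through representation theory but through the structure-constant invariant $\tau=c_{21}^2+\tfrac i2(c_{31}^2-c_{12}^3)$ of Proposition~\ref{prop:3.1}: $\tau=0$ characterizes the Berger case, and when $\tau\neq0$ the equations \eqref{eqn:4.5} force $n=2$ and pin down $e_0$ uniquely. Your outline has no analogue of this mechanism, and without it the algebra you propose will not close.
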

This paper is organized as follows. In Sect. \ref{sect:2}, we
consider the left-invariant metric $ds^2$ on $S^3=SU(2)$ with Lie
algebra $\mathfrak{su}(2)$ and recall that the connection and
curvature of $ds^2$ can be expressed precisely by the structure
constants of $\mathfrak{su}(2)$. In particular, in terms of the
structure constants, we state the criterion of $ds^2$ being a Berger
metric (Proposition \ref{prop:2.2}) and the criterion for a
left-invariant metric $ds^2$ on $S^3$ to have constant sectional
curvatures $c$ (Proposition \ref{prop:2.3}). In Sect.\ \ref{sect:3},
we introduce basic formulae for equivariant CR immersion (Lemma
\ref{lem:3.1}) and derive the classical Gauss-Weingarten formulae
and Gauss-Codazzi equations. In Sect.\ \ref{sect:4}, a roughly
classification for all equivariant CR minimal immersions is proved.
It turns out that there are only three possibilities of such
immersions: a non-Berger sphere and $n=2$; a Berger sphere with
$\nabla^\bot\xi_0=0$ for each $n\ge2$; Berger spheres with
$\nabla^\bot\xi_0\ne0$ for each $n\ge3$ (see Theorem \ref{thm:4.1}).
Here $\xi_0=J(\varphi_{\ast}(X_1))$ is a normal vector and $X_1$ is
an unitary vector field that spans $W_1$. Then in Sect.\
\ref{sect:5}, it is shown that all these possibilities do occur and
their explicitly expressions in polynomial are presented
(Propositions \ref{prop:5.1} and \ref{prop:5.2}). Finally in Sect.\
\ref{sect:6}, we demonstrate the uniqueness for such immersions
(Theorems \ref{thm:6.1} and \ref{thm:6.2}).

\vskip 2mm

\noindent{\bf Acknowledgements}. The authors are greatly indebted to
the referee for his/her very helpful suggestions and valuable
comments.

\section{The Lie group $S^3$ and its canonical
structures}\label{sect:2}

In this section, we shall review some fundamental results of
$S^3$ which are given in \cite{Li} and will be needed in later
sections. For completeness, we also give several new results which
are of independent meaning.

\subsection{The standard metric of $S^3$}\label{sect:2.1}~

Let $S^3=\{(z,w)\in\mathbb C^2|z\bar{z}+w\bar{w}=1\}$. It is
identified with $SU(2)$ by
\begin{equation}\label{eqn:2.1}
{\rm id}:{S^3}\rightarrow {SU(2)}:{(z,w)}\mapsto{
\begin{pmatrix}
z & -\bar{w}\\  w & \ \bar{z}
\end{pmatrix}}.
\end{equation}

Denote by $\mathfrak{su}(2)$ and $\mathfrak{su}(2)^*$ the Lie
algebra consisting of all left-invariant vector fields and the
vector space of all left-invariant $1$-forms on $S^3$,
respectively. The collection of left-invariant $1$-forms
$\{\omega'_1,\omega'_2,\omega'_3\}$, defined by
\begin{equation}\label{eqn:2.2}
\begin{pmatrix}
i\omega'_1 & -\omega'_2+i\omega'_3\\  \omega'_2+i\omega'_3 &-i\omega'_1
\end{pmatrix}
=
\begin{pmatrix}
\bar z & \bar w\\  -w & z
\end{pmatrix}
\begin{pmatrix}
dz & -d\bar w\\ dw & d\bar z
\end{pmatrix},
\end{equation}
forms a global frame of $T^*S^3$. Here, and throughout this
paper, we adopt the usual notation $i=\sqrt{-1}$. Exterior
differentiation of \eqref{eqn:2.2} leads to the equations
\begin{equation}\label{eqn:2.3}
d\omega'_1=2\,\omega'_2\wedge \omega'_3,\ \ d\omega'_2=2\,\omega'_3 \wedge \omega'_1,\
\ d\omega'_3=2\,\omega'_1\wedge \omega'_2.
\end{equation}

From \eqref{eqn:2.2} we get
\begin{equation}\label{eqn:2.4}
\left\{
\begin{aligned}
\omega'_1&=-\tfrac{i}2(\bar{z}dz+\bar{w}dw-zd\bar{z}-wd\bar{w}),\\
\omega'_2&=-\tfrac 12(wdz-zdw+\bar{w}d\bar{z}-\bar{z}d\bar{w}),\\
\omega'_3&=\tfrac{i}2(wdz-zdw-\bar{w}d\bar{z}+\bar{z}d\bar{w}).
\end{aligned}
\right.
\end{equation}
Thus the dual frame $\{X'_1,X'_2,X'_3\}$ of $\{\omega'_1,\omega'_2,\omega'_3\}$ is given by
\begin{equation}\label{eqn:2.5}
\begin{aligned}
X'_1(z,w)=i(z,w),\ X'_2(z,w)=(-\bar{w},\bar{z}),\
X'_3(z,w)=i(-\bar{w},\bar{z}).
\end{aligned}
\end{equation}

The standard metric on $S^3$ is defined by
$ds_0^2=\sum_i\omega'_i\otimes\omega'_i$, which is bi-invariant and has
constant sectional curvature $1$. Choose an orientation of $S^3$
such that $\omega'_1\wedge\omega'_2\wedge\omega'_3=*1$, where $*1$ is the
volume element with respect to $ds_0^2$. An orthonormal frame
$\{\tilde\omega'_1,\tilde\omega'_2,\tilde\omega'_3\}$ of $T^*S^3$ is called {\it
oriented frame} if
$\tilde{\omega}'_1\wedge\tilde{\omega}'_2\wedge\tilde{\omega}'_3=*1$. If otherwise,
$\tilde{\omega}'_1\wedge\tilde{\omega}'_2\wedge\tilde{\omega}'_3=-*1$, it is called
the {\it oppositive oriented frame}.

With respect to $ds_0^2$, the Hodge star operator
$*:{\mathfrak{su}(2)^*}\rightarrow{\mathfrak{su}(2)^*\wedge
\mathfrak{su}(2)^*}$ is a linear isomorphism, which maps the basis
$\{\omega'_1,\omega'_2,\omega'_3\}$ of $\mathfrak{su}(2)^*$ into a basis
$\{\omega'_2\wedge\omega'_3,\omega'_3\wedge\omega'_1, \omega'_1\wedge\omega'_2\}$ of
$\mathfrak{su}(2)^*\wedge \mathfrak{su}(2)^*$. Thus, by
\eqref{eqn:2.3}, $d=2*$ and it is also a linear isomorphism.

The following lemma can be proved easily.
\begin{lemma}\label{lem:2.1}
Suppose $\{\tilde\omega'_1,\tilde\omega'_2,\tilde\omega'_3\}$ is a basis of
$\mathfrak{su}(2)^*$. Then it is an oriented frame of $T^*S^3$
{\rm (}related to $ds_0^2${\rm)} if and only if it satisfies
\eqref{eqn:2.3}.
\end{lemma}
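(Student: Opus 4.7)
The plan is to reduce the statement to a linear-algebra condition on the change-of-basis matrix. Write $\tilde\omega_i'=\sum_{j=1}^{3}a_{ij}\omega_j'$ with $A=(a_{ij})\in GL(3,\mathbb{R})$. Since $\{\omega_i'\}$ is itself an oriented orthonormal basis of $\mathfrak{su}(2)^*$ for $ds_0^2$, the basis $\{\tilde\omega_i'\}$ is an oriented orthonormal frame if and only if $A\in SO(3)$, so the lemma reduces to showing that \eqref{eqn:2.3} holds in the tilded basis if and only if $A\in SO(3)$.

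For the forward direction, I would first recast \eqref{eqn:2.3} as the intrinsic identity $d\omega_i'=2*\omega_i'$, a rewriting already observed in the paper, where $*$ denotes the Hodge star determined by $ds_0^2$ and the prescribed orientation. Since both $d$ and $*$ are $\mathbb{R}$-linear on $\mathfrak{su}(2)^*$ and do not depend on any choice of basis, linearity immediately gives $d\tilde\omega_i'=2*\tilde\omega_i'$ in the tilded frame. If $\{\tilde\omega_i'\}$ is oriented orthonormal, the defining property of the Hodge star yields $*\tilde\omega_1'=\tilde\omega_2'\wedge\tilde\omega_3'$ together with its two cyclic permutations, and \eqref{eqn:2.3} is recovered verbatim.

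For the converse, I would substitute $\tilde\omega_i'=\sum_j a_{ij}\omega_j'$ into \eqref{eqn:2.3} and compare coefficients on the basis $\{\omega_2'\wedge\omega_3',\,\omega_3'\wedge\omega_1',\,\omega_1'\wedge\omega_2'\}$ of $\mathfrak{su}(2)^*\wedge\mathfrak{su}(2)^*$. The left hand sides reproduce the entries $a_{ij}$, while the right hand sides produce precisely the $(i,j)$-cofactors of $A$; the three relations in \eqref{eqn:2.3} therefore collapse into the single matrix identity $A^T=\mathrm{adj}(A)$. Multiplying on the right by $A$ and using $\mathrm{adj}(A)\cdot A=(\det A)I$ gives $A^T A=(\det A)I$; taking determinants yields $(\det A)^2=(\det A)^3$, and the invertibility of $A$ forces $\det A=1$. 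Hence $A^T A=I$ and $A\in SO(3)$, as required.

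The main (but mild) obstacle is the coefficient bookkeeping in the converse: the natural basis $\{\omega_2'\wedge\omega_3',\omega_3'\wedge\omega_1',\omega_1'\wedge\omega_2'\}$ is written in cyclic rather than lexicographic order, so one must track signs carefully to recognize the right-hand sides as the $(i,j)$-cofactors of $A$. Once that identification is in place, the adjugate/determinant argument cleanly delivers $A\in SO(3)$.
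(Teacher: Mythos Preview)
Your argument is correct. The forward direction via the basis-free identity $d=2*$ on $\mathfrak{su}(2)^*$ is exactly the right observation, and in the converse your identification of the wedge-product coefficients with the cofactors of $A$, leading to $A^T=\mathrm{adj}(A)$ and hence $A\in SO(3)$, is clean and accurate.

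As for comparison: the paper does not actually prove this lemma---it only remarks that it ``can be proved easily'' and moves on. Your write-up therefore supplies what the paper omits. The Hodge-star formulation you use for the forward direction is precisely the viewpoint the paper adopts in the surrounding discussion (it notes $d=2*$ just before stating the lemma), so that half is in the spirit of the text; the adjugate/determinant trick for the converse is your own contribution and is an efficient way to close the argument.
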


The adjoint representation ${\rm Ad}$ of $SU(2)$ induces an action
of the Lie group $SU(2)$ on $\mathfrak{su}(2)^*$ by
$$
{\rm Ad}^*:{SU(2)\times
\mathfrak{su}(2)^*}\rightarrow{\mathfrak{su}(2)^*}:{(\varrho,\theta)}\mapsto{{\rm
Ad}_{\varrho}^*(\theta)=a_{\varrho}^*\theta},
$$
where $a_\varrho:SU(2)\rightarrow SU(2)$ defined by
$a_\varrho(A)=\varrho^{-1}A\varrho$ is an inner automorphism. Since
$d\circ a_\varrho^*=a_\varrho^*\circ d$, by virtue of Lemma
\ref{lem:2.1}, it is easily seen that $a_\varrho^*$ maps an oriented
frame $\{\tilde\omega'_1,\tilde\omega'_2, \tilde\omega'_3\}$ into an
oriented frame
$\{a_\varrho^*\tilde\omega'_1,a_\varrho^*\tilde\omega'_2,
a_\varrho^*\tilde\omega'_3\}$. Conversely, one can prove the
following lemma.
\begin{lemma}\label{lem:2.2}
For any given oriented frame
$\{\tilde\omega'_1,\tilde\omega'_2,\tilde\omega'_3\}$ of $T^*S^3$,
there exists an element $\varrho\in SU(2)$ such that
$a_\varrho^*\tilde\omega'_1=\omega'_1$,
$a_\varrho^*\tilde\omega'_2=\omega'_2$ and
$a_\varrho^*\tilde\omega'_3=\omega'_3$.
\end{lemma}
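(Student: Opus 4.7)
The plan is to reduce the problem to the classical fact that the adjoint representation $\mathrm{Ad}: SU(2) \to SO(\mathfrak{su}(2))$ is surjective (indeed, a double cover of $SO(3)$). Dualising, $\mathrm{Ad}^*$ realises every orientation-preserving isometry of $(\mathfrak{su}(2)^*, ds_0^2)$ as $a_\varrho^*$ for some $\varrho \in SU(2)$, and Lemma \ref{lem:2.1} lets us rephrase the hypothesis on $\{\tilde\omega'_1,\tilde\omega'_2,\tilde\omega'_3\}$ purely in linear-algebraic terms.

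First, I would expand the given oriented frame in the basis $\{\omega'_1,\omega'_2,\omega'_3\}$, writing $\tilde\omega'_i = \sum_j a_{ij}\omega'_j$ for a unique matrix $A = (a_{ij})$. Since $ds_0^2 = \sum_i \omega'_i \otimes \omega'_i$, orthonormality of $\{\tilde\omega'_i\}$ forces $A A^T = I$, while the orientation condition $\tilde\omega'_1 \wedge \tilde\omega'_2 \wedge \tilde\omega'_3 = \omega'_1 \wedge \omega'_2 \wedge \omega'_3$ forces $\det A = 1$. Hence $A \in SO(3)$.

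Next, I would invoke the surjectivity of $\mathrm{Ad}^*: SU(2) \to SO(\mathfrak{su}(2)^*) \cong SO(3)$. A quick justification uses that $\mathrm{Ad}$ preserves the Killing form (hence $ds_0^2$) and orientation (by continuity and connectedness of $SU(2)$), so its image lies in $SO(3)$; equality of dimensions together with the fact that the kernel is the discrete centre $\{\pm I\}$ identifies the image as all of $SO(3)$. Consequently, one can choose $\varrho \in SU(2)$ whose $\mathrm{Ad}^*_\varrho$, expressed in the basis $\{\omega'_1,\omega'_2,\omega'_3\}$, is exactly the matrix $A^{-1}$. Then
\[
a_\varrho^* \tilde\omega'_i \;=\; \sum_j a_{ij}\, a_\varrho^* \omega'_j \;=\; \sum_{j,k} a_{ij}(A^{-1})_{jk}\, \omega'_k \;=\; \omega'_i,
\]
which is the desired conclusion.

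The only real obstacle is the surjectivity of $\mathrm{Ad}: SU(2) \to SO(3)$; everything else is a direct matching of matrices. Since this surjectivity is a standard fact (it is the classical $2:1$ covering via unit quaternions), the proof should be short. One could alternatively avoid invoking this fact abstractly by constructing $\varrho$ by hand: decompose the target rotation $A^{-1} \in SO(3)$ into Euler-angle-type rotations about the coordinate axes of $\mathfrak{su}(2)$, and exhibit explicit diagonal and off-diagonal elements of $SU(2)$ realising each factor via \eqref{eqn:2.2}; this is the computational fallback if a self-contained argument is preferred.
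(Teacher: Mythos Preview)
Your argument is correct. The paper, however, does not actually supply a proof of Lemma~\ref{lem:2.2}: it only records that $a_\varrho^*$ preserves oriented frames (using $d\circ a_\varrho^*=a_\varrho^*\circ d$ together with Lemma~\ref{lem:2.1}) and then asserts that ``conversely, one can prove the following lemma,'' leaving the proof to the reader. Your reduction to the surjectivity of $\mathrm{Ad}:SU(2)\to SO(3)$ is precisely the standard argument one would expect here, and the Euler-angle fallback you mention is a reasonable self-contained alternative. There is nothing further to compare.
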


\begin{remark}\label{rem:2.1}
{\rm Lemma \ref{lem:2.2} implies that $SU(2)$ acts transitively on
$\mathfrak{su}(2)^*$ by $Ad^*$, and it also acts transitively on the
set of all orientated frame.}
\end{remark}

\subsection{The structure constants of the Lie algebra
$\mathfrak{su}(2)$}\label{sect:2.2}~

Now, we examine the properties of a general left-invariant metric on
$S^3$. Some results in this subsection expand that of Milnor
\cite{Mil}.

From now on, we adopt the convention of range of indices:
$i,j,k,\ldots=1,2,3$.

A left-invariant metric on $S^3=SU(2)$ has the form that
$ds^2=\sum \omega_i\otimes\omega_i$, where $\{\omega_i\}$ is a basis of
$\mathfrak{su}(2)^*$. Obviously, $\{\omega_i\}$ is a global orthonormal
frame of $T^*S^3$.

Related to $\{\omega_i\}$, the structure equations of $ds^2$ are given
by
\begin{equation}\label{eqn:2.6}
d\omega_i=-\sum\omega_{ij}\wedge\omega_j,\ \ \omega_{ij}+\omega_{ji}=0,
\end{equation}
\begin{equation}\label{eqn:2.7}
d\omega_{ij}=-\sum\omega_{ik}\wedge\omega_{kj}+\Omega_{ij},
\end{equation}
where $\omega_{ij}$ and $\Omega_{ij}$ are the connection forms and
curvature forms, respectively.

Let $\{X_i\}$ be the dual frame of $\{\omega_i\}$ defined on $S^3$.
Denote by $\nabla$ the Levi-Civita connection of $ds^2$. Then we
have
\begin{equation}\label{eqn:2.8}
\nabla_{X_k} X_i=-\sum\omega_{ij}(X_k) X_j,\ \
\nabla_{X_k}\omega_i=-\sum\omega_{ij}(X_k)\omega_j.
\end{equation}

Let $\{c^k_{ij}\}$ be the structure constants of $\mathfrak{su}(2)$
with respect to the basis $\{X_1, X_2, X_3\}$, i.e., $[X_i,
X_j]=\sum c^k_{ij} X_k$. It is clear that $c^k_{ij}=-c^k_{ji}$.

Using the Koszul formula one gets
\begin{equation}\label{eqn:2.9}
\omega_{ij}=\tfrac12\sum(c_{ij}^k-c_{jk}^i+c_{ik}^j)\omega_k.
\end{equation}

Equivalently, the above relations can be written exactly as
\begin{equation}\label{eqn:2.10}
\left\{
\begin{aligned}
\omega_{12}&=c_{12}^1\omega_1+c_{12}^2\omega_2+(c_{12}^3+a)\omega_3,\\
\omega_{23}&=(c_{23}^1+a)\omega_1+c_{23}^2\omega_2+c_{23}^3\omega_3,\\
\omega_{31}&=c_{31}^1\omega_1+(c_{31}^2+a)\omega_2+c_{31}^3\omega_3,
\end{aligned}
\right.
\end{equation}
where
\begin{equation}\label{eqn:2.11}
a=-\tfrac12(c_{23}^1+c_{31}^2+c_{12}^3).
\end{equation}

Hence, $\omega_{ij}\in \mathfrak{su}(2)^*$, and therefore, by
\eqref{eqn:2.7}, $\Omega_{ij}\in \mathfrak{su}(2)^*\wedge
\mathfrak{su}(2)^*$ and it is also left-invariant. Substituting
\eqref{eqn:2.9} into \eqref{eqn:2.6}, we can write
\begin{equation}\label{eqn:2.12}
d\omega_i=-\tfrac12\sum
c_{jk}^i\omega_j\wedge\omega_k=-\sum_{j<k}c_{jk}^i\omega_j\wedge\omega_k,
\end{equation}
i.e.,
\begin{equation}\label{eqn:2.13}
\left\{
\begin{aligned}
-d\omega_1&=c^1_{23}\omega_2\wedge\omega_3+c^1_{31}\omega_3\wedge\omega_1+c^1_{12}\omega_1\wedge\omega_2,\\
-d\omega_2&=c^2_{23}\omega_2\wedge\omega_3+c^2_{31}\omega_3\wedge\omega_1+c^2_{12}\omega_1\wedge\omega_2,\\
-d\omega_3&=c^3_{23}\omega_2\wedge\omega_3+c^3_{31}\omega_3\wedge\omega_1+c^3_{12}\omega_1\wedge\omega_2.
\end{aligned}
\right.
\end{equation}

Since $d:{\mathfrak{su}(2)^*}\rightarrow{\mathfrak{su}(2)^*\wedge
\mathfrak{su}(2)^*}$ is an isomorphism, the coefficients matrix on
the right hand of \eqref{eqn:2.13}:
\begin{equation}\label{eqn:2.14}
C=\begin{pmatrix}  c^1_{23}& c^1_{31} &c^1_{12}\\  c^2_{23}& c^2_{31} &c^2_{12}\\  c^3_{23}& c^3_{31} &c^3_{12}
\end{pmatrix}
\end{equation}
is non-degenerate, namely $\det C\ne0$.

It is obvious that the matrix $C$ depends on the frame $\{\omega_i\}$.
Assume $\{\tilde\omega_i\}$ is another orthonormal basis of
$\mathfrak{su}(2)^*$, then a similar relations as \eqref{eqn:2.13}
holds with corresponding matrix $\widetilde C$. Now, we derive the
relationship between $C$ and $\widetilde C$.

Take an orientation for $S^3$. The volume element of $ds^2$ is
still denoted by $*1$. With respect to $ds^2$, the Hodge star
operator ${*}:{\mathfrak{su}(2)^*}\rightarrow{\mathfrak{su}(2)^*\wedge
\mathfrak{su}(2)^*}$ is also a linear isomorphism. Let $\omega_1\wedge
\omega_2\wedge \omega_3=\varepsilon
*1$, $\varepsilon=\pm 1$. Then, by \eqref{eqn:2.13},
$$
(d\omega_1,d\omega_2,d\omega_3)=-\varepsilon (*\omega_1,*\omega_2,*\omega_3)C.
$$

Similarly, if $\{\tilde\omega_i\}$ is another orthonormal frame with
$\tilde\omega_1\wedge\tilde\omega_2\wedge\tilde\omega_3=\tilde\varepsilon *1$, then
$$
(d\tilde\omega_1,d\tilde\omega_2,d\tilde\omega_3)=-\tilde\varepsilon (*\tilde\omega_1,*\tilde\omega_2,*\tilde\omega_3)\widetilde C.
$$

Noting that there exists $T\in O(3)$ such that
$(\tilde\omega_1,\tilde\omega_2,\tilde\omega_3)=(\omega_1,\omega_2,\omega_3)T$, then by the
linearity of $d$ and $*$ we have
$$
-\tilde\varepsilon (*\omega_1,*\omega_2,*\omega_3)T\widetilde C=-\tilde\varepsilon
(*\tilde\omega_1,*\tilde\omega_2,*\tilde\omega_3)\widetilde C=-\varepsilon (*\omega_1,*\omega_2,*\omega_3)CT.
$$

It follows that
\begin{equation}\label{eqn:2.15}
\widetilde
C=\tilde\varepsilon\varepsilon\,T^{-1}CT=\tilde\varepsilon\varepsilon\,T^tCT.
\end{equation}

We will call an orthonormal frame $\{\omega_i\}$ {\it a normalized
frame} if it corresponds to a diagnalized matrix $C={\rm
diag}\,(c_{23}^1,c_{31}^2,c_{12}^3)$.

Before ending this subsection, we summarize the useful results in
the following proposition.

\begin{proposition}\label{prop:2.1}
{\rm (i)} The matrix $C$ in \eqref{eqn:2.14} is symmetric. Consequently,
\begin{equation}\label{eqn:2.16}
c_{21}^2+c_{31}^3=0,\ \ c_{12}^1+c_{32}^3=0,\ \ c_{13}^1+c_{23}^2=0.
\end{equation}

{\rm (ii)} There is always a normalized frame $\{\omega_i\}$ for $ds^2$.

{\rm (iii)} The scalar $a=-\tfrac12{\rm trace}\,C$ defined by
\eqref{eqn:2.11} is an orientation invariant. Moreover, $a^2$ is an
intrinsic invariant of $ds^2$.

{\rm (iv)} If $\{\omega_i\}$ is a normalized frame, then
$c_{jk}^ic_{ij}^k>0$ for distinct $i,j,k$.
\end{proposition}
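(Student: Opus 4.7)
My plan is to prove the four parts in order, relying on (a) the unimodularity of $\mathfrak{su}(2)$ and (b) the negative definiteness of its Killing form. For (i), I would extract the three identities of \eqref{eqn:2.16} from the vanishing of the trace of $\mathrm{ad}$, which holds on $\mathfrak{su}(2)$ because the algebra is perfect: every element is a sum of commutators, and $\mathrm{tr}(\mathrm{ad}_{[Y,Z]})=\mathrm{tr}[\mathrm{ad}_Y,\mathrm{ad}_Z]=0$. Thus $\sum_i c^i_{ij}=0$ for each $j$, and using $c^j_{jj}=0$ yields \eqref{eqn:2.16} directly; an entry-by-entry inspection of the matrix \eqref{eqn:2.14} shows that these three identities are equivalent to the symmetry $C=C^t$.

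Part (ii) is then immediate from (i): the real spectral theorem supplies a $T\in O(3)$ with $T^tCT$ diagonal, and setting $\tilde\omega_i=\sum_j T_{ji}\omega_j$ produces an orthonormal frame whose structure matrix $\widetilde C=\tilde\varepsilon\varepsilon\, T^tCT$ (by \eqref{eqn:2.15}) is diagonal, i.e.\ normalized. For (iii), taking traces in \eqref{eqn:2.15} gives $\mathrm{tr}\widetilde C=\tilde\varepsilon\varepsilon\,\mathrm{tr}\,C$, hence $\tilde a=\tilde\varepsilon\varepsilon\,a$; and since $C$ is built from the purely algebraic coefficients of \eqref{eqn:2.13}, neither $C$ nor $a$ depends on the chosen orientation of $S^3$. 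The remaining sign ambiguity arising from orientation-reversing frame changes disappears in $a^2$, which is therefore an intrinsic invariant of $ds^2$.

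For (iv), I would invoke the Killing form. In a normalized frame, with nonzero structure constants $\alpha=c^1_{23}$, $\beta=c^2_{31}$, $\gamma=c^3_{12}$, the brackets read $[X_2,X_3]=\alpha X_1$, $[X_3,X_1]=\beta X_2$, $[X_1,X_2]=\gamma X_3$. A direct computation of $(\mathrm{ad}_{X_i})^2$ gives $K(X_1,X_1)=-2\beta\gamma$ and, cyclically, $K(X_2,X_2)=-2\alpha\gamma$, $K(X_3,X_3)=-2\alpha\beta$. Since $\mathfrak{su}(2)$ is compact semisimple, $K$ is negative definite, forcing $\alpha\beta,\beta\gamma,\gamma\alpha>0$, so $\alpha,\beta,\gamma$ share a common sign; a short check over the six permutations of $(i,j,k)$ then yields $c^i_{jk}c^k_{ij}>0$. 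I expect the subtlest point to lie in (iii), where one must cleanly separate reversal of the orientation of $S^3$ (which keeps the frame but flips $\varepsilon$) from change of orthonormal frame (which acts through \eqref{eqn:2.15} with $\tilde\varepsilon\varepsilon=\det T$); the rest is routine linear algebra.
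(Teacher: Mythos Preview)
Your proof is correct, but for parts (i) and (iv) you take a genuinely different route from the paper.

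For (i), the paper argues purely with differential forms: since $d:\mathfrak{su}(2)^*\to\mathfrak{su}(2)^*\wedge\mathfrak{su}(2)^*$ is an isomorphism, each $\omega_i\wedge\omega_j$ is exact, hence closed, and expanding $d\omega_i\wedge\omega_j=\omega_i\wedge d\omega_j$ via \eqref{eqn:2.13} yields the three off-diagonal equalities of $C$. Your unimodularity argument (perfectness $\Rightarrow\mathrm{tr}\,\mathrm{ad}_X=0$) is equally short and more Lie-theoretic; it has the advantage of showing directly why the phenomenon is tied to an algebraic property of $\mathfrak{su}(2)$ rather than to the differential-form calculus. For (iv), the paper passes to the standard coframe $\{\omega'_i\}$ of $ds_0^2$ via a matrix $A\in GL(3,\mathbb R)$, computes $\omega_i\wedge d\omega_i$ two ways, and finds $c^1_{23}\det A<0$, $c^2_{31}\det A<0$, $c^3_{12}\det A<0$, forcing a common sign. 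Your Killing-form computation $K(X_1,X_1)=-2\beta\gamma<0$ (and cyclic) is cleaner and more intrinsic, avoiding the auxiliary standard frame entirely; the paper's version, on the other hand, recovers slightly more, namely the explicit relation between the common sign and the orientation of $\{\omega_i\}$ relative to $\{\omega'_i\}$. Parts (ii) and (iii) coincide with the paper's argument.
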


\begin{proof}
{\rm (i)}. Since $\omega_1\wedge\omega_2\in
\mathfrak{su}(2)^*\wedge \mathfrak{su}(2)^*$ and that $
d:{\mathfrak{su}(2)^*}\rightarrow{\mathfrak{su}(2)^*\wedge \mathfrak{su}(2)^*}$
is an isomorphism, we have $d(\omega_1\wedge\omega_2)=0$ and
$d\omega_1\wedge\omega_2=\omega_1\wedge d\omega_2$. Then, by \eqref{eqn:2.13},
$$
-c_{31}^1\omega_1\wedge\omega_2\wedge\omega_3=d\omega_1\wedge\omega_2=\omega_1\wedge d\omega_2=-c_{23}^2\omega_1\wedge\omega_2\wedge\omega_3.
$$
Thus, $c_{31}^1=c_{23}^2$. Similarly, we have $c_{12}^1=c_{23}^3$
and $c_{12}^2=c_{31}^3$. Hence \eqref{eqn:2.16} follows.

From \eqref{eqn:2.15} and (i), and according to the theory of linear
algebra, we obtain immediately {\rm (ii)} and {\rm (iii)}.

{\rm (iv)}. Let $\{\omega'_i\}$ be an oriented frame of the standard
metric $ds_0^2$. Since both $\{\omega_i\}$ and $\{\omega'_i\}$ are basis
of $\mathfrak{su}(2)^*$, there is a matrix $A=(a_{ij})\in
GL(3,\mathbb{R})$ such that $(\omega_1,\omega_2,\omega_3)=(\omega'_1,\omega'_2,\omega'_3)A$.
Then $\omega_1\wedge\omega_2\wedge\omega_3=(\det
A)\,\omega'_1\wedge\omega'_2\wedge\omega'_3$ and, by Lemma \ref{lem:2.1}, it
holds that
$$
d\omega_1=2a_{11}\omega'_2\wedge\omega'_3+2a_{21}\omega'_3\wedge\omega'_1+2a_{31}\omega'_1\wedge\omega'_2.
$$

On the other hand, by assumption,
$d\omega_1=-c_{23}^1\omega_2\wedge\omega_3$, thus
$$
-c_{23}^1\omega_1\wedge\omega_2\wedge\omega_3=\omega_1\wedge d\omega_1=2(a_{11}^2+a_{21}^2+a_{31}^2)\omega'_1\wedge\omega'_2\wedge\omega'_3.
$$
This implies that $c_{23}^1\det A<0$. Similarly, we get
$c_{31}^2\det A<0$ and $c_{12}^3\det A<0$. Then the assertion
immediately follows.
\end{proof}

\subsection{The Berger metric on $S^3$}\label{sect:2.3}~

First we state the following definition of Berger sphere (cf.
\cite{HLW,T})

\begin{definition}\label{def:2.1}
Let $\{\omega'_i\}$ be an oriented frame of $S^3$ with respect to
$ds^2_0$, and $b,c$ are two positive real numbers. Then, the
left-invariant metric defined by
\begin{equation}\label{eqn:2.17}
ds^2=\tfrac1c(b^2\omega'^2_1+\omega'^2_2+\omega'^2_3)
\end{equation}
is called a {\it Berger metric}.  The $3$-sphere $S^3$ equipped with a Berger metric is called a {\it Berger sphere}.
\end{definition}

\begin{remark}\label{rem:2.2}
{\rm In \cite{HLW,T}, a Berger metric is defined essentially in the
form \eqref{eqn:2.17} with the frame $\{\omega'_i\}$ being defined
by \eqref{eqn:2.2}. Thus, according to Lemma \ref{lem:2.2}, up to an
inner automorphism ${a_\varrho}:{SU(2)}\rightarrow{SU(2)}$, the
above definition of Berger sphere should be the same as that in
\cite{HLW,T}. Note that $b$ and $c$ are two parameters in a Berger
metric $ds^2$, and its sectional curvatures are constant if and only
if $b=1$.}
\end{remark}

Take $\omega'_1\wedge\omega'_2\wedge\omega'_3$ as the orientation of
$S^3$. Then, for the metric \eqref{eqn:2.17},
$$
\{ \omega_1=\tfrac b{\sqrt c}\omega'_1,\ \omega_2=\tfrac1{\sqrt c}\omega'_2,\
\omega_3=\tfrac1{\sqrt c}\omega'_3 \}
$$
is an orthonormal oriented frame. Using Lemma \ref{lem:2.1} we get
\begin{equation}\label{eqn:2.18}
d\omega_1=2b\sqrt{c}\omega_2\wedge\omega_3,\ \
d\omega_2=\tfrac2b\sqrt{c}\omega_3\wedge\omega_1,\ \
d\omega_3=\tfrac2b\sqrt{c}\omega_1\wedge\omega_2.
\end{equation}

Conversely, we have the following criterion for Berger sphere.
\begin{proposition}\label{prop:2.2}
A left-invariant metric $ds^2$ on $S^3$ is a Berger metric if and
only if there exists an orthonormal frame $\{\omega_i\}$ and
constants $c_1,c_2\in\mathbb{R}$ such that
\begin{equation}\label{eqn:2.19}
d\omega_1=2c_1\omega_2\wedge\omega_3,\ \ d\omega_2=2c_2\omega_3\wedge\omega_1,\ \
d\omega_3=2c_2\omega_1\wedge\omega_2.
\end{equation}
\end{proposition}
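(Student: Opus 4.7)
The plan is to prove both directions directly from the structure equations. For necessity, suppose $ds^2$ is the Berger metric \eqref{eqn:2.17}; then equation \eqref{eqn:2.18} already exhibits an orthonormal frame $\{\omega_i\}$ satisfying \eqref{eqn:2.19} with $c_1 = b\sqrt{c}$ and $c_2 = \sqrt{c}/b$, so this direction requires no further argument.

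For sufficiency, suppose the orthonormal frame $\{\omega_i\}$ satisfies \eqref{eqn:2.19}. My strategy is to construct positive scalars $\mu_1,\mu_2,\mu_3$ so that $\omega'_i := \mu_i\,\omega_i$ satisfies the standard Maurer--Cartan equations \eqref{eqn:2.3}; once this is achieved, Lemma \ref{lem:2.1} guarantees that $\{\omega'_i\}$ is an oriented frame for $ds_0^2$, and re-expressing $ds^2=\sum \omega_i\otimes\omega_i$ in terms of $\{\omega'_i\}$ will recover the Berger form \eqref{eqn:2.17}.

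For the construction, I first observe that \eqref{eqn:2.19} realizes $\{\omega_i\}$ as a normalized frame with $C=\mathrm{diag}(c_1,c_2,c_2)$, so the non-degeneracy $\det C\ne 0$ forces $c_1,c_2\ne 0$, and Proposition \ref{prop:2.1}(iv) yields $c_1 c_2>0$. After possibly replacing $\omega_3$ by $-\omega_3$ (which simultaneously flips the signs of $c_1$ and $c_2$ while preserving the shape of \eqref{eqn:2.19}), I may assume $c_1,c_2>0$. Requiring $d\omega'_i = 2\,\omega'_j\wedge\omega'_k$ then amounts to the algebraic system $\mu_1 c_1=\mu_2\mu_3$, $\mu_2 c_2=\mu_3\mu_1$, $\mu_3 c_2=\mu_1\mu_2$; multiplying the last two gives $\mu_1^2=c_2^2$, so $\mu_1=c_2$, whence $\mu_2=\mu_3=\sqrt{c_1 c_2}$.

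The final step is bookkeeping: $ds^2 = (1/c_2^2)\,\omega'_1\otimes\omega'_1 + (1/(c_1 c_2))(\omega'_2\otimes\omega'_2 + \omega'_3\otimes\omega'_3)$, and then setting $c=c_1 c_2$ and $b=\sqrt{c_1/c_2}$ puts $ds^2$ into the Berger form \eqref{eqn:2.17}. I expect the only delicate point to be the sign bookkeeping that reduces the proof to the case $c_1,c_2>0$; once that is settled, the algebraic solution for the $\mu_i$ and the appeal to Lemma \ref{lem:2.1} are routine.
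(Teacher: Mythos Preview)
Your proof is correct and essentially identical to the paper's: both establish sufficiency by rescaling the $\omega_i$ to an $\{\omega'_i\}$ satisfying \eqref{eqn:2.3}, invoke Lemma~\ref{lem:2.1}, and then read off the Berger form \eqref{eqn:2.17} with $c=c_1c_2$ and $b=\sqrt{c_1/c_2}$ (the paper flips $\omega_1$ rather than $\omega_3$ for the sign normalization, but that is immaterial). One harmless slip: comparing \eqref{eqn:2.19} with \eqref{eqn:2.13} actually gives $C=-2\,\mathrm{diag}(c_1,c_2,c_2)$ rather than $\mathrm{diag}(c_1,c_2,c_2)$, but this does not affect your appeal to $\det C\ne 0$ or to Proposition~\ref{prop:2.1}(iv).
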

\begin{proof}
The ``only if" part is trivially from \eqref{eqn:2.18}. To prove the
``if" part, we first note by replacing $\omega_1$ with $-\omega_1$
if necessary we may assume $c_1>0$. Then the item {\rm (iv)} of
Proposition \ref{prop:2.1} implies that $c_2>0$. Let $c=c_1c_2$,
$b=\sqrt{c_1/c_2}$. Then $c_1=b\sqrt{c}$, $c_2=\sqrt{c}/b$, and
\eqref{eqn:2.19} becomes \eqref{eqn:2.18}. Put $\omega'_1=(\sqrt
c/b)\omega_1$, $\omega'_2=\sqrt c\omega_2$, $\omega'_3=\sqrt
c\omega_3$, then $\{\omega'_i\}$ is a basis of $\mathfrak{su}(2)^*$
which satisfies \eqref{eqn:2.3}. According to Lemma \ref{lem:2.1},
$\{\omega'_i\}$ is an oriented frame with respect to $ds_0^2$. Thus
the metric $ds^2=\sum\omega_i^2$ assumes the form \eqref{eqn:2.17}.
\end{proof}

\subsection{The curvature of left-invariant metrics on
$S^3$}\label{sect:2.4}~

As continuation of the previous subsections, we now compute the
curvatures of the left-invariant metric $ds^2$. As the main result,
we give the necessary and sufficient conditions under which $ds^2$
is of constant sectional curvatures.

From \eqref{eqn:2.7}, \eqref{eqn:2.9} and \eqref{eqn:2.12} we have
$$
\Omega_{ij}=-\tfrac14\sum [(c_{ik}^l+c_{lk}^i+c_{il}^k)(c_{jk}^m+c_{jm}^k+c_{mk}^j)
      +(c_{ij}^k+c_{kj}^i+c_{ik}^j)c_{lm}^k]\omega_l\wedge\omega_m.
$$

Let $R_{ijlm}$ denote the components of the Riemannian curvature
tensor of $ds^2$, thus $\Omega_{ij}=\tfrac12\sum
R_{ijlm}\omega_l\wedge\omega_m$ and $R_{ijlm}=-R_{ijml}$. We use
notation $i,j,m\neq$ to indicate $i,j,m$ are distinct indices. Since
$i,j,l,m\in\{1,2,3\}$, if $R_{ijlm}\ne0$ then $l$ has to be $i$ or
$j$, and $i,j,m\neq$. Making use \eqref{eqn:2.11} and
\eqref{eqn:2.16}, we obtain
\begin{equation}\label{eqn:2.20}
\left\{
\begin{aligned}
&R_{ijim}=c_{ij}^j(c_{ij}^m-c_{jm}^i+c_{mi}^j)-2c_{ij}^ic_{im}^i,\\
&R_{ijij}=a^2+(c_{im}^i)^2-(c_{ij}^i)^2-(c_{ij}^j)^2-(c_{ij}^m)^2-c_{jm}^ic_{mi}^j,
\end{aligned}
\right. \ \  i,j,m\neq.
\end{equation}

In particular, if $\{\omega_i\}$ is a normalized frame, then
\begin{equation}\label{eqn:2.21}
R_{ijim}=0,\ \ R_{ijij}=a^2-(c_{ij}^m)^2-c_{jm}^ic_{mi}^j, \ \
i,j,m\neq.
\end{equation}

The following result, which is of independent meaning, establishes a
criterion in terms of the structure constants for the left-invariant
metric $ds^2$ to have constant sectional curvatures $c$.

\begin{proposition}\label{prop:2.3}
A left-invariant metric $ds^2$ has constant curvature $c$ if and
only if for any orthonormal frame $\{\omega_i\}$, the corresponding
structure constants $\{c_{ij}^k\}$ satisfy
\begin{equation}\label{eqn:2.22}
c_{ij}^i=0,\ \ c_{jm}^i=c_{mi}^j=c_{ij}^m=\pm2\sqrt{c}, \ \
i,j,m\neq.
\end{equation}
\end{proposition}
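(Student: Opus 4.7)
The plan is to prove each direction separately, carrying the ``only if'' direction first through a normalized frame and then transferring the conclusion to an arbitrary orthonormal frame via \eqref{eqn:2.15}.

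For the ``if'' direction, I would first observe that the hypothesis $c_{ij}^i=0$ (for $i\ne j$) together with \eqref{eqn:2.16} forces every off-diagonal entry of the matrix $C$ in \eqref{eqn:2.14} to vanish, so the frame $\{\omega_i\}$ is automatically normalized. Then I would substitute the values in \eqref{eqn:2.22} into the normalized-frame curvature formulas \eqref{eqn:2.21}. A direct computation gives $a=-\tfrac12(c_{23}^1+c_{31}^2+c_{12}^3)=\mp 3\sqrt c$, hence $a^2=9c$, and $R_{ijij}=9c-4c-4c=c$ with $R_{ijim}=0$ for distinct $i,j,m$, which is the constant sectional curvature $c$ condition. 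This part should be a routine verification.

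For the ``only if'' direction, I would start from a normalized frame $\{\omega_i\}$, which exists by Proposition \ref{prop:2.1}(ii); write the diagonal entries of $C$ as $\lambda_1=c_{23}^1$, $\lambda_2=c_{31}^2$, $\lambda_3=c_{12}^3$. The equations $R_{ijij}=c$ together with \eqref{eqn:2.21} give the system
\begin{equation*}
a^2-\lambda_3^2-\lambda_1\lambda_2=a^2-\lambda_2^2-\lambda_1\lambda_3=a^2-\lambda_1^2-\lambda_2\lambda_3=c,
\end{equation*}
where $a=-\tfrac12(\lambda_1+\lambda_2+\lambda_3)$. Pairwise subtraction factors as $(\lambda_i-\lambda_j)(\lambda_i+\lambda_j-\lambda_k)=0$ for each cyclic arrangement. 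If all three $\lambda$'s were distinct, adding the three resulting linear relations would force $\lambda_1=\lambda_2=\lambda_3=0$, contradicting $\det C\ne 0$. If exactly two were equal, say $\lambda_2=\lambda_3\ne\lambda_1$, the remaining factor gives $\lambda_1=0$, again contradicting non-degeneracy of $C$. Hence $\lambda_1=\lambda_2=\lambda_3=:\lambda$, and substituting back yields $c=\tfrac14\lambda^2$, i.e.\ $\lambda=\pm 2\sqrt c$.

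It remains to propagate this from the normalized frame to an arbitrary orthonormal frame $\{\tilde\omega_i\}$. Applying \eqref{eqn:2.15} with $C=\lambda I$ gives $\widetilde C=\tilde\varepsilon\varepsilon\,T^tCT=\pm\lambda\,T^tT=\pm\lambda I$, since $T\in O(3)$. Therefore $\widetilde C$ is again a scalar multiple of the identity with eigenvalue $\pm 2\sqrt c$, which translates exactly to the structural identities in \eqref{eqn:2.22} (the vanishing of off-diagonal entries of $\widetilde C$ gives $c_{ij}^i=0$ by the off-diagonal version of \eqref{eqn:2.16}, and the equality of the three diagonal entries delivers the equal values $\pm 2\sqrt c$ for the cyclic triple). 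The main subtlety, and the only step requiring care, is the case analysis in the second paragraph where one must use the non-degeneracy of $C$ to exclude the spurious branches where two $\lambda$'s coincide or all three are distinct; once this is handled, the transfer through \eqref{eqn:2.15} is automatic.
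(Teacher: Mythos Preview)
Your proposal is correct and follows essentially the same route as the paper: verify the ``if'' direction by direct substitution into the curvature formulas, and for the ``only if'' direction pass to a normalized frame, extract the three equations $a^2-\lambda_k^2-\lambda_i\lambda_j=c$, run a case analysis to force $\lambda_1=\lambda_2=\lambda_3=\pm2\sqrt c$, and then push the conclusion to an arbitrary orthonormal frame via \eqref{eqn:2.15}. The only cosmetic difference is how the spurious branch (two eigenvalues equal, the third different) is eliminated: you invoke $\det C\neq 0$, while the paper first observes via Cartan--Hadamard that $c>0$ and uses that instead; both work.
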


\begin{proof} First of all, if a metric on $S^3$ has
constant sectional curvature $c$, then by Cartan-Hadamard theorem we
have $c>0$.

The ``if" part is a direct consequence of \eqref{eqn:2.20},
\eqref{eqn:2.22} and \eqref{eqn:2.11}.

To prove the ``only if" part, by virtue of Proposition
\ref{prop:2.1}, we can first choose a normalized frame
$\{\tilde\omega_i\}$ and denote by $\{\tilde c^k_{ij}\}$ the
corresponding structure constants. Then by \eqref{eqn:2.21} we have
\begin{equation}\label{eqn:2.23}
\textstyle{
(\tilde{c}_{ij}^m)^2+\tilde{c}_{mi}^j\tilde{c}_{jm}^i=(\tilde{c}_{mi}^j)^2+\tilde{c}_{jm}^i\tilde{c}_{ij}^m=
(\tilde{c}_{jm}^i)^2+\tilde{c}_{ij}^m\tilde{c}_{mi}^j=a^2-c,\ \ i,j,m\neq}.
\end{equation}

If $\tilde{c}_{mi}^j,\tilde{c}_{ij}^m,\tilde{c}_{jm}^i$ are mutually
distinct, then by \eqref{eqn:2.23}
$\tilde{c}_{mi}^j=\tilde{c}_{ij}^m=\tilde{c}_{jm}^i=0$, a
contradiction. So, without loss of generality, we may assume that
$\tilde c_{mi}^j=\tilde c_{ij}^m$. By \eqref{eqn:2.23} again we have
$\tilde c_{mi}^j=\tilde c_{ij}^m=\tilde c_{jm}^i=\pm2\sqrt{c}$.
Thus, $\widetilde{C}=\pm2\sqrt{c}I$ with $I$ denoting the unit
matrix. Finally, let $\{\omega_i\}$ be an arbitrary orthonormal
frame, then by \eqref{eqn:2.15} we have $C=\varepsilon
T\widetilde{C}T^{-1}=\pm2\varepsilon\sqrt{c}I$ with
$\varepsilon=\pm1$. The assertion is verified.
\end{proof}

\begin{remark}\label{rem:2.3}
Proposition \ref{prop:2.3} implies that a left-invariant metric on
 $S^3$ is homothetic to the round one if and only if $C$ is proportional to
the identity. Similarly, as indicated by Proposition \ref{prop:2.2},
a left-invariant nonconstant sectional curvature metric on $S^3$ is
a Berger metric if and only if the matrix $C$ has exactly two
nonzero distinct eigenvalues.
\end{remark}

\section{Equivariant CR immersions from $S^3$ into $\mathbb CP^n$}\label{sect:3}

In general, an {\it equivariant} map $\varphi:{G_1/H_1}\rightarrow{G_2/H_2}$ is
such a map which is compatible with the Lie group structures of
$G_1$ and $G_2$ (cf. \cite{Li}). In particular, for
$S^3=SU(2)/SU(1)$ and $\mathbb CP^n=U(n+1)/[U(1)\times U(n)]$, a map
$\varphi:S^3\rightarrow\mathbb CP^n$ is equivariant if there exist a Lie group
homomorphism $E:{SU(2)}\rightarrow{U(n+1)}$ and a holomorphic isometry
$A:\mathbb CP^n\rightarrow\mathbb CP^n$ such that $\varphi=A\circ\pi_1\circ E$, where
$\pi_1:{U(n+1)}\rightarrow\mathbb CP^n$ is the natural projection (cf. Definition
4.1 in \cite{Li}).

Following \cite{Be}, a submanifold $M\hookrightarrow\mathbb CP^n$ is called
a {\it CR-submanifold} if there is an orthogonal direct sum
decomposition $TM=V_1\oplus V_2$ such that $JV_1\subset T^\perp M$
and $JV_2=V_2$. In particular, if $V_2=\{0\}$ (resp. $V_1=\{0\}$),
then $M$ is called a {\it totally real} (resp. {\it complex})
submanifold.

From now on, we will use the following range convention of indices:
$$
A,B,\ldots=1,\ldots,n; \ \ i,j,\ldots=1,2,3;\ \
\alpha,\beta,\ldots=3,\ldots,n.
$$

In sequel of this section, we will present the fundamental structure
equations and several geometric conclusions for immersions from
$S^3$ into $\mathbb CP^n$ under the equivariant and CR type
conditions. This particularly includes Theorem \ref{thm:3.1}, which
states that an equivariant CR immersion
$\varphi:S^3\rightarrow\mathbb CP^n$ can not be totally geodesic.

\subsection{Basic formula for equivariant CR immersion $\varphi:S^3\rightarrow\mathbb CP^n$
}\label{sect:3.1}~

Let $\pi:S^{2n+1}\rightarrow\mathbb CP^n:v\mapsto\pi(v)=[v]$ be the Hopf
fibration. We suppose $\varphi:S^3\rightarrow\mathbb CP^n$ is an equivariant
immersion. According to Proposition 4.2 of \cite{Li}, there exists a
global unitary frame $\{e_0,e_A\}$ of the trivial bundle
$\underline{\mathbb C}^{n+1}=S^3\times \mathbb C^{n+1}$ such that $\varphi=[e_0]$ and
\begin{equation}\label{eqn:3.1}
\left\{
\begin{aligned}
de_0&=i\rho_0e_0+\sum\theta_A\ e_A,\\
de_A&=-\bar\theta_A e_0+\sum\theta_{AB}e_B,\ \ \theta_{AB}+\bar{\theta}_{BA}=0,
\end{aligned}
\right.
\end{equation}
where $\rho_0\in \mathfrak{su}(2)^*$, $\theta_A,\theta_{AB}\in
\mathfrak{su}(2)^*\otimes\mathbb C$. Hence, the induced metric
$ds^2=\sum\theta_A\bar\theta_A$ via $\varphi$ is left-invariant. Take an
orthonormal frame $\{\omega_i\}$, which is a basis of
$\mathfrak{su}(2)^*$, for $T^*S^3$. We can write $\theta_A=\sum
a_{Ai}\omega_i$, where $a_{Ai}\in\mathbb C$ are all constant. Then
\begin{equation}\label{eqn:3.2}
\sum\omega_i\otimes\omega_i=ds^2=\sum a_{Ai}\bar a_{Aj}\omega_i\omega_j
=\tfrac12\sum(a_{Ai}\bar a_{Aj}+\bar a_{Ai}a_{Aj})\omega_i\otimes\omega_j.
\end{equation}

Let $\Omega$ be the K\"ahler form of $\mathbb CP^n$, then we have
\begin{equation}\label{eqn:3.3}
\varphi^*\Omega=\tfrac
1{2\sqrt{-1}}\sum\theta_A\wedge\bar\theta_A=-\tfrac{\sqrt{-1}}2\sum
a_{Ai}\bar a_{Aj}\omega_i\wedge\omega_j =-\sum J_{ij}\omega_i\otimes\omega_j,
\end{equation}
where
\begin{equation}\label{eqn:3.4}
J_{ij}=\tfrac{\sqrt{-1}}2\sum(a_{Ai}\bar a_{Aj}-\bar
a_{Ai}a_{Aj})=-J_{ji}\in\mathbb{R}
\end{equation}
are also constant. The above equation together with \eqref{eqn:3.2}
gives
\begin{equation}\label{eqn:3.5}
\sum a_{Ai}\bar a_{Aj}=\delta_{ij}-\sqrt{-1}J_{ij}.
\end{equation}

The tensor $-\varphi^*\Omega$ determines a well defined bundle endomorphism
$$
{F=\sum J_{ij}\omega_i\otimes X_j}:{TS^3}\rightarrow{TS^3},
$$
where $\{X_i\}$ is the dual frame of $\{\omega_i\}$. Since $J_{ij}$'s
are constant, the rank of $F$ should be $0$ or $2$. If $F=0$, then
the submanifold $\varphi(S^3)$ is totally real (or called {\it weakly
Lagrangian} \cite{J-L}). If $F\ne0$, the tangent bundle $TS^3$
can be decomposed into orthogonal direct sum $TS^3=V_1\oplus V_2$
with $V_1=\ker F$. By re-choosing the frame we may assume $X_1\in
V_1$. Then
\begin{equation}\label{eqn:3.6}
FX_1=0,\ \ FX_2=J_{23}X_3,\ \ FX_3=-J_{23}X_2.
\end{equation}

Denote by $g$ the standard Fubini-Study metric of $\mathbb CP^n$ with
constant holomorphic sectional curvature $4$. From
$$
J_{23}=ds^2(FX_2,X_3)=-\varphi^*\Omega(X_2,X_3)=g(J\varphi_*(X_2),\varphi_*(X_3)),
$$
we see that $|J_{23}|\le1$. If $|J_{23}|=1$, then $\varphi(S^3)$ is
a CR-submanifold and $\varphi$ is said to be of {\it CR type}. In
fact, $FX_i$ is just the tangent component of $J\varphi_*(X_i)$ if
we identify $TS^3$ with $\varphi_*(TS^3)$.

Denote $\chi:=\arccos J_{23}$ the angle function. $\varphi(S^3)$ is a
totally real submanifold (resp. CR-submanifold) if and only if
$\chi=\pi/2$ (resp. $\chi=0$ or $\pi$). Therefore, the angle
function $\chi$ is an analogue of the K\"ahler angle of an immersed
surface in $\mathbb CP^n$.

From now on, we assume that the immersion $\varphi$ is of CR type. The
following lemma gives basic formulae which in sequel will be used
from time to time.

\begin{lemma}\label{lem:3.1}
Let $\varphi:S^3\rightarrow\mathbb CP^n$ be an equivariant CR immersion with
induced metric $ds^2$. Then there exist an orthonormal frame
$\{\omega_i\}$ of $T^*S^3$ and a unitary frame
$\{e_0,e_1,e_2,e_\alpha\}$ of $\underline{\mathbb C}^{n+1}$ such that $\varphi=[e_0]$, and
\begin{equation}\label{eqn:3.7}
\left\{
\begin{aligned}
&de_0=i\rho_0 e_0+\omega_1 e_1+\omega e_2,\\
&de_1=-\omega_1 e_0 + i\rho_1 e_1+\theta_{12} e_2+\sum\theta_{1\alpha} e_\alpha,\\
&de_2=-\bar\omega e_0 - \bar\theta_{12} e_1 +i\rho_2 e_2 +
\sum\theta_{2\alpha}e_\alpha,\\
&de_\alpha=- \bar\theta_{1\alpha}e_1-\bar\theta_{2\alpha}e_2 +
\sum\theta_{\alpha\beta}e_\beta, \ \ \theta_{\alpha\beta}+\bar{\theta}_{\beta\alpha}=0,
\end{aligned}
\right.
\end{equation}
where $\omega=\omega_2+i\omega_3$, $\rho_0,\,\rho_1,\,\rho_2\in
\mathfrak{su}(2)^*$, $\theta_{12},\,\theta_{1\alpha},\,\theta_{2\alpha}$,
$\theta_{\alpha\beta}\in \mathfrak{su}(2)^*\otimes\mathbb C$.
\end{lemma}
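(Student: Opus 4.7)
The plan is to refine the unitary frame produced by Proposition 4.2 of \cite{Li} by exploiting two remaining pieces of freedom: rotating the orthonormal coframe $\{\omega_i\}$ of $T^*S^3$, and rotating the complement of $e_0$ inside $\underline{\mathbb C}^{n+1}$ by a $U(n)$-action (which preserves the unitary-frame condition and leaves $\varphi=[e_0]$ untouched).

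First, I would apply Proposition 4.2 of \cite{Li} to obtain a unitary frame $\{e_0,e_A\}$ satisfying \eqref{eqn:3.1}, and expand $\theta_A=\sum_i a_{Ai}\omega_i$ relative to an orthonormal coframe $\{\omega_i\}$ of $T^*S^3$; the relations \eqref{eqn:3.5} then read $\sum_A a_{Ai}\bar a_{Aj}=\delta_{ij}-iJ_{ij}$. I would then specialize $\{\omega_i\}$ so that its dual frame has $X_1\in V_1=\ker F$ (possible since the CR assumption forces $\dim V_1=1$), hence $J_{12}=J_{13}=0$; and, after replacing $X_3$ by $-X_3$ if necessary, $J_{23}=1$.

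The decisive step is an algebraic identity. Setting $u_j=(a_{1j},\ldots,a_{nj})\in\mathbb C^n$ and using the standard Hermitian inner product $\langle u,v\rangle=\sum u_A\bar v_A$, the above relations become $\langle u_i,u_j\rangle=\delta_{ij}-iJ_{ij}$; in particular $\langle u_2,u_3\rangle=-i$. A direct computation then gives
\[
\|u_2+iu_3\|^2=\langle u_2,u_2\rangle+\langle u_2,iu_3\rangle+\langle iu_3,u_2\rangle+\langle iu_3,iu_3\rangle=1-1-1+1=0,
\]
hence $u_3=iu_2$, i.e.\ $a_{A3}=ia_{A2}$ for every $A$. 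Substituting back yields $\theta_A=a_{A1}\omega_1+a_{A2}\omega$ with $\omega:=\omega_2+i\omega_3$, and the identities $\langle u_1,u_1\rangle=\langle u_2,u_2\rangle=1$, $\langle u_1,u_2\rangle=0$ show that $\{u_1,u_2\}$ is orthonormal in $\mathbb C^n$.

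Finally I would complete $\{u_1,u_2\}$ to a unitary basis of $\mathbb C^n$ and apply the corresponding $U\in U(n)$ to $\{e_1,\ldots,e_n\}$, keeping $e_0$ fixed. In the transformed frame one has $\theta_1=\omega_1$, $\theta_2=\omega$, and $\theta_\alpha=0$ for $\alpha\ge3$, delivering the first line of \eqref{eqn:3.7}. The remaining three lines follow at once from the skew-Hermitian condition $\theta_{AB}+\bar\theta_{BA}=0$ in \eqref{eqn:3.1}: the diagonal entries $\theta_{11},\theta_{22}$ are pure imaginary and are renamed $i\rho_1,i\rho_2$ with $\rho_1,\rho_2\in\mathfrak{su}(2)^*$, while $\theta_{\alpha 1}$ and $\theta_{\alpha 2}$ are rewritten as $-\bar\theta_{1\alpha}$ and $-\bar\theta_{2\alpha}$. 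The only genuinely non-routine ingredient is the vanishing $u_2+iu_3=0$: it encodes the CR condition algebraically (the $J$-invariant tangent plane $V_2\subset TS^3$ lifts to a holomorphic direction in $\mathbb C^{n+1}$ which can be absorbed into $e_2$), and everything else is a standard frame adjustment.
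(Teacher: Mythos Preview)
Your proof is correct and follows essentially the same route as the paper: the paper likewise starts from Proposition~4.2 of \cite{Li}, fixes a coframe with $X_1\in\ker F$ and $J_{23}=1$, and then uses exactly the identity $|f_2+if_3|^2=0$ (your $\|u_2+iu_3\|^2=0$) to conclude $f_3=if_2$, after which it extends $\{u_1,u_2\}$ to a unitary basis of $\mathbb C^n$ to define $e_1,e_2,e_\alpha$. The only cosmetic differences are that the paper flips $X_2$ rather than $X_3$ to normalize the sign of $J_{23}$, and it explicitly remarks that the new $\theta_{AB}=\langle de_A,\bar e_B\rangle$ remain in $\mathfrak{su}(2)^*\otimes\mathbb C$ because they are constant linear combinations of the original ones.
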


\begin{proof}
By assumption $J_{23}=\pm 1$ in \eqref{eqn:3.6}. This ensures us to
choose a left-invariant orthonormal frame $\{X_i\}$ of $TS^3$, by
replacing $X_2$ with $-X_2$ if necessary, such that
\begin{equation}\label{eqn:3.8}
FX_1=0,\ \ FX_2=X_3,\ \ FX_3=-X_2.
\end{equation}

On the other hand, as in \eqref{eqn:3.1} there exists a unitary fame
$\{e_0,e'_A\}$ of $\underline{\mathbb C}^{n+1}$ such that $\varphi=[e_0]$ and
\begin{equation}\label{eqn:3.9}
de_0=i\rho_0e_0+\sum \theta_A e'_A,\ \ de'_A=-\bar\theta_A e_0+\sum
\theta'_{AB}e'_B,
\end{equation}
where $\rho_0\in \mathfrak{su}(2)^*$, $\theta_A,\theta'_{AB}\in
\mathfrak{su}(2)^*\otimes\mathbb C$.

Let $\{\omega_i\}$ be the dual frame of $\{X_i\}$. We can write
$\theta_A=\sum a_{Ai}\omega_i$. Then \eqref{eqn:3.5} holds true with
$J_{12}=J_{13}=0$, $J_{23}=1$. The first equation of \eqref{eqn:3.9}
becomes
\begin{equation}\label{eqn:3.10}
de_0=i\rho_0e_0+\omega_1 f_1+\omega_2 f_2+\omega_3 f_3,
\end{equation}
where $f_i=\sum a_{Ai}e'_A$.

Denote by $\langle\cdot,\cdot\rangle$ the canonical symmetric scalar product
of $\mathbb C^{n+1}$. From \eqref{eqn:3.5} and \eqref{eqn:3.8} we get
$$
|f_1|=|f_2|=|f_3|=1,\ \ \langle{f_1},{\bar f_2}\rangle=\langle{f_1},{\bar
f_3}\rangle=0,\ \ \langle{f_2},{\bar f_3}\rangle=-i.
$$
This leads to $|f_2+if_3|^2=0$ and therefore $f_3=if_2$. Thus, by
\eqref{eqn:3.10},
\begin{equation}\label{eqn:3.11}
de_0=i\rho_0e_0+\omega_1 e_1+\omega e_2,
\end{equation}
where $\omega=\omega_2+i\omega_3$ and
\begin{equation}\label{eqn:3.12}
e_1=f_1=\sum\,a_{A1}e'_A, \ \ e_2=f_2=\sum\,a_{A2}e'_A.
\end{equation}

By \eqref{eqn:3.5} and \eqref{eqn:3.8}, we see that
$v_1=(a_{11},\ldots,a_{n1})$ and $v_2=(a_{12},\ldots,a_{n2})$ are
two orthogonal unit vectors of $\mathbb{C}^n$. Expand $\{v_1,v_2\}$
to be a unitary basis $\{v_1,v_2,v_3,\ldots,v_n\}$ of $\mathbb C^n$ such
that $v_\alpha=(v_{1\alpha},\ldots,v_{n\alpha}),\ \alpha=3,
\ldots,n$. Put
\begin{equation}\label{eqn:3.13}
e_3=\sum\,v_{A3}e'_A,\ \ldots,\ e_n=\sum\,v_{An}e'_A.
\end{equation}
Then $\{e_0,e_A\}$ forms a unitary frame of $\underline{\mathbb C}^{n+1}$. According
to \eqref{eqn:3.11}, the structure equations of the frame
$\{e_0,e_1,e_2,e_\alpha\}$ take the form of \eqref{eqn:3.7}. From
\eqref{eqn:3.9}, \eqref{eqn:3.12} and \eqref{eqn:3.13}, we know that
$\theta_{AB}=\langle{de_A},{\bar e_B}\rangle\in \mathfrak{su}(2)^*\otimes\mathbb C$
since $\theta'_{AB}\in \mathfrak{su}(2)^*\otimes\mathbb C$.
\end{proof}

\subsection{Invariants of equivariant CR immersion}\label{sect:3.2}~

Hereafter, $\{X_i\}$ is assumed the orthonormal frame satisfying
\eqref{eqn:3.8} and $\{\omega_i\}$ is its dual frame. Set
$Z=\frac12(X_2-iX_3)$ and $\omega=\omega_2+i\omega_3$. Then $\{X_1,Z,\bar
Z\}$ and $\{\omega_1,\omega,\bar\omega\}$ give a new pair of dual frames.
Obviously,
\begin{equation}\label{eqn:3.14}
X_2=Z+\bar Z,  \ \ X_3=i(Z-\bar Z).
\end{equation}

From \eqref{eqn:2.8}, with respect to $\{X_1,Z,\bar Z\}$ and
$\{\omega_1,\omega,\bar\omega\}$, we have
\begin{equation}\label{eqn:3.15}
\nabla X_1=-(\sigma Z+\bar\sigma\bar Z), \ \ \nabla Z=\tfrac12\bar\sigma
X_1-i\omega_{23}Z,
\end{equation}
where $\sigma=\omega_{12}+i\omega_{13}$, and by \eqref{eqn:2.10},
\begin{equation}\label{eqn:3.16}
\omega_{23}=(c_{23}^1+a)\omega_1-\tfrac i2\bar\mu\omega+\tfrac i2\mu\bar\omega,\
\ \sigma=\mu\omega_1+\tfrac i2c_{23}^1\omega-\tau\bar\omega,
\end{equation}
where
\begin{equation}\label{eqn:3.17}
\mu=c_{12}^1+ic_{13}^1,\ \ \tau=c_{21}^2+\tfrac
i2(c_{31}^2-c_{12}^3).
\end{equation}
Then, by \eqref{eqn:3.15} and \eqref{eqn:3.16}, we have
\begin{equation}\label{eqn:3.18}
\nabla_{X_1}X_1=-(\mu Z+\bar\mu\bar Z), \ \ \nabla_{\bar
Z}X_1=\tau Z+\tfrac i2 c_{23}^1\bar Z.
\end{equation}

The structure equation \eqref{eqn:2.6}, \eqref{eqn:2.7} can be
rewritten as
\begin{equation}\label{eqn:3.19}
\left\{
\begin{aligned}
&d\omega_1=-\tfrac12[\bar\mu\omega_1\wedge\omega+\mu\omega_1\wedge\bar\omega+ic_{23}^1\omega\wedge\bar\omega],\\
&d\omega=\tfrac
i2(c_{23}^1+2a)\omega_1\wedge\omega+\tau\omega_1\wedge\bar\omega+\tfrac12\mu\omega\wedge\bar\omega,
\end{aligned}
\right.
\end{equation}
\begin{equation}\label{eqn:3.20}
d\omega_{23}=\tfrac{i}2\sigma\wedge\bar{\sigma}+\Omega_{23},\ \
d\sigma=-i\sigma\wedge\omega_{23}+\Omega_{12}+i\Omega_{13}.
\end{equation}

With respect to $\{\omega_1,\omega,\bar\omega\}$, the curvature forms can be expressed as
$$
\Omega_{ij}=\tfrac12[(R_{ij12}-iR_{ij13})\omega_1\wedge\omega+(R_{ij12}+iR_{ij13})\omega_1\wedge\bar\omega+iR_{ij23}\omega\wedge\bar\omega].
$$
It follows that
\begin{equation}\label{eqn:3.21}
\Omega_{23}=\tfrac12[(R_{2312}-iR_{2313})\omega_1\wedge\omega+(R_{2312}+iR_{2313})\omega_1\wedge\bar\omega
+iR_{2323}\omega\wedge\bar{\omega}],
\end{equation}
\begin{equation}\label{eqn:3.22}
\begin{aligned}
\Omega_{12}+i\Omega_{13}=\tfrac12[&(R_{1212}+R_{1313})\omega_1\wedge\omega
+(R_{1212}-R_{1313}+2iR_{1213})\omega_1\wedge\bar\omega\\
&+(iR_{1223}-R_{1323})\omega\wedge\bar{\omega}].
\end{aligned}
\end{equation}

For any orthonormal frame $\{\tilde X_i\}$ with $F\tilde{X}_1=0$ and $F\tilde X_2=\tilde X_3$ we must have
$$
\tilde X_1=(\det T) X_1,\ \ \tilde X_2=\cos t\,X_2-\sin t\,X_3,\ \ \tilde
X_3=\sin t\,X_2+\cos t\,X_3,
$$
where $t$ is a real number, and $T$ is the transition matrix from
$\{ X_i\}$ to $\{\tilde X_i\}$. If we set $Z'=\tfrac12(\tilde X_2-i\tilde
X_3)$, then the above frame transformation can be written as
\begin{equation}\label{eqn:3.23}
\tilde{X}_1=\varepsilon X_1,\ \ Z'=e^{-it}Z,\ \ \bar Z'=e^{it}\bar Z
\end{equation}
where $\varepsilon=\det T$.

Now we prove the following proposition.
\begin{proposition}\label{prop:3.1}
Let $\varphi:S^3\rightarrow\mathbb CP^n$ be an equivariant CR
immersion with induced metric $ds^2$. Then we have:

{\rm (i)} $c_{12}^1=c_{13}^1=0$ if and only if $\nabla_{X_1}X_1=0$,
that is, the integral curve of $X_1$ is a geodesic of
$(S^3,ds^2)$;

{\rm (ii)} $c_{12}^1=c_{13}^1=c_{21}^2=0$ and $c_{31}^2=c_{12}^3$ if and only if $(S^3,ds^2)$ is a Berger
sphere;

{\rm (iii)} Under the frame transformation \eqref{eqn:3.23}, the
structure constants $|c_{12}^1+ic_{13}^1|$, $|c_{23}^1|$ and
$|c_{21}^2+\tfrac{i}2(c_{31}^2-c_{12}^3)|$ are all invariant.
\end{proposition}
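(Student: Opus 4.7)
The plan is to treat the three parts separately. Part (i) is immediate from \eqref{eqn:3.17} and \eqref{eqn:3.18}: one has $\nabla_{X_1}X_1=-(\mu Z+\bar\mu\bar Z)$ with $\mu=c_{12}^1+ic_{13}^1$, and since $\{Z,\bar Z\}$ is $\mathbb C$-linearly independent, this vanishes precisely when $\mu=0$, equivalently when $c_{12}^1=c_{13}^1=0$; the geodesic interpretation of $\nabla_{X_1}X_1=0$ is then standard.

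For part (ii), the approach is to reduce everything to the Berger criterion of Proposition \ref{prop:2.2}. Using the symmetry relations $c_{31}^1=c_{23}^2$, $c_{12}^1=c_{23}^3$, $c_{12}^2=c_{31}^3$ of Proposition \ref{prop:2.1}(i) together with the antisymmetry $c_{ij}^k=-c_{ji}^k$, I would first check that the three vanishing conditions $c_{12}^1=c_{13}^1=c_{21}^2=0$ force every off-diagonal entry of the matrix $C$ in \eqref{eqn:2.14} to vanish; combined with $c_{31}^2=c_{12}^3$ this gives $C=\mathrm{diag}(c_{23}^1,c_{31}^2,c_{31}^2)$, and plugging into \eqref{eqn:2.13} recovers \eqref{eqn:2.19}, so Proposition \ref{prop:2.2} yields the ``if'' direction. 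For the converse, Proposition \ref{prop:2.2} supplies a frame in which $C=\mathrm{diag}(\alpha,\beta,\beta)$; the step that must be justified is that the CR direction $X_1=\ker F$ coincides (up to sign) with the eigenvector of the simple eigenvalue $\alpha$. Once this alignment is in hand, the CR frame itself diagonalises $C$ and the four conditions can be read off via Proposition \ref{prop:2.1}(i). I expect this alignment step to be the main obstacle; it should follow from the uniqueness of the $V_1$-distribution together with the rotational symmetry that a non-round Berger metric enjoys around its distinguished direction.

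Part (iii) I would carry out by direct substitution into the transformation \eqref{eqn:3.23}. Since rotations in the $(X_2,X_3)$-plane preserve $[X_2,X_3]$ identically, $[\tilde X_2,\tilde X_3]=[X_2,X_3]$; comparing $\tilde X_1$-components yields $\tilde c_{23}^1=\varepsilon\,c_{23}^1$, and since $\varepsilon=\pm 1$ this gives $|\tilde c_{23}^1|=|c_{23}^1|$. For the remaining two quantities, I would re-express \eqref{eqn:3.18} in the primed frame using $\tilde X_1=\varepsilon X_1$, $Z=e^{it}Z'$, $\bar Z=e^{-it}\bar Z'$; a short computation then gives
\[
\nabla_{\tilde X_1}\tilde X_1=-(e^{it}\mu\,Z'+\overline{e^{it}\mu}\,\bar Z'),\qquad
\nabla_{\bar Z'}\tilde X_1=\varepsilon e^{2it}\tau\,Z'+\tfrac{i}{2}\varepsilon\,c_{23}^1\,\bar Z',
\]
so that $\tilde\mu=e^{it}\mu$ and $\tilde\tau=\varepsilon e^{2it}\tau$. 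Taking absolute values and recalling the definitions \eqref{eqn:3.17} of $\mu$ and $\tau$ shows that both $|c_{12}^1+ic_{13}^1|$ and $|c_{21}^2+\tfrac i2(c_{31}^2-c_{12}^3)|$ are frame-independent, as claimed.
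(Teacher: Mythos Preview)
Your treatments of parts (i) and (iii) are correct and match the paper's approach. The paper obtains (iii) by writing $c_{23}^1=-4i\,ds^2(\nabla_{\bar Z}X_1,Z)$, $\mu=-2\,ds^2(\nabla_{X_1}X_1,\bar Z)$, $\tau=2\,ds^2(\nabla_{\bar Z}X_1,\bar Z)$ from \eqref{eqn:3.18} and reading off the transformation under \eqref{eqn:3.23}; your direct substitution yielding $\tilde\mu=e^{it}\mu$, $\tilde\tau=\varepsilon e^{2it}\tau$, $\tilde c_{23}^1=\varepsilon c_{23}^1$ is the same computation in slightly different packaging.

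For part (ii) your forward implication is correct and agrees with the paper. You are also right to isolate the converse as the only substantive point: Proposition~\ref{prop:2.2} only produces \emph{some} orthonormal frame satisfying \eqref{eqn:2.19}, and nothing forces it to be the CR frame $\{X_1,X_2,X_3\}$. However, your proposed resolution---that the alignment of $X_1=\ker F$ with the distinguished Berger axis ``should follow from the uniqueness of the $V_1$-distribution together with the rotational symmetry''---is not an argument. The Berger axis is intrinsic to the metric, whereas $V_1$ is determined by the pullback of the K\"ahler form and hence by the immersion; no link between the two has been established at this stage. In fact the paper's own proof is just as terse here (it merely cites Proposition~\ref{prop:2.2}, \eqref{eqn:3.17} and \eqref{eqn:3.19}), so you have actually flagged a point the paper glosses over rather than introduced a new gap. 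For the record, at the one place the converse is invoked (Corollary~\ref{cor:4.1}) the extra constraints of Lemma~\ref{lem:4.1}(ii) are already available, and one can check directly that $\mu=0$, $\tau\neq 0$ forces the matrix $C$ of \eqref{eqn:2.14} to have three distinct eigenvalues, so the metric is not Berger; the downstream arguments are therefore unaffected.
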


\begin{proof} The assertion (i) is from the first equation of \eqref{eqn:3.18}.
The assertion (ii) is a direct consequence of Proposition
\ref{prop:2.2}, \eqref{eqn:3.17} and \eqref{eqn:3.19}. Finally, we
obtain from \eqref{eqn:3.18} that
$$
c_{23}^1=-4ids^2(\nabla_{\bar Z}X_1,Z),\ \mu=-2ds^2(\nabla_{X_1}X_1,\bar Z),\ \tau=2ds^2(\nabla_{\bar Z}X_1,\bar Z).
$$

The assertion (iii) follows from the above computation of
$c_{23}^1,\,\mu$ and $\tau$.
\end{proof}

\subsection{The Gauss-Weingarten formulae }\label{sect:3.3}~

Let $\widetilde{\mathbb{C}}^{n+1}=\mathbb CP^n\times\mathbb C^{n+1}$
be the trivial bundle, $L$ the canonical line bundle over $\mathbb
CP^n$. Denote by $L^{\bot}$ (resp. $\bar{L}$) the orthogonal
complement (resp. the complex conjugate) of $L$ in
$\widetilde{\mathbb{C}}^{n+1}$. The trivial connection $d$ of
$\widetilde{\mathbb{C}}^{n+1}$ induces the standard connection on
both $L$ and $L^{\bot}$. This produces the connection on $\bar L$
and hence on $\bar L\otimes L^{\bot}$.

To clarify the above let us take a local unitary frame $\{e_0,\,
e_A\}$ of $\widetilde{\mathbb{C}}^{n+1}$ with $e_0\in L$. For
simplicity, we will denote by the same notation $\nabla$ the
connections on different vector bundles. Let us write
$$
de_0=i\rho_0e_0+\sum \theta_A e_A,\ \ de_A=-\bar\theta_A e_0+\sum
\theta_{AB}e_B.
$$
Then by definition
$$
\nabla e_0=i\rho_0e_0,\ \ \nabla e_A=\sum \theta_{AB}e_B.
$$
Thus $\nabla\bar e_0=-i\rho_0\bar e_0$, and consequently,
\begin{equation}\label{eqn:3.24}
\nabla(\bar e_0\otimes e_A)={\textstyle\sum}(\theta_{AB}-i\delta_{AB}\rho_0)(\bar{e}_0\otimes e_B).
\end{equation}

If we identify $T\mathbb CP^n\cong_\mathbb{R} T^{(1,0)}\mathbb CP^n$ with $\bar L\otimes
L^{\bot}$, then the connection on $\bar L\otimes L^{\bot}$ is
exactly the Levi-Civita connection on $T\mathbb CP^n$. With respect to the
unitary frame $\{\bar e_0\otimes e_A\}$ of $\bar L\otimes L^{\bot}$,
$\{\theta_{AB}-i\delta_{AB}\rho_0\}$ are the connection $1$-forms.

For an immersion $\varphi:{S^3}\rightarrow{\mathbb CP^n}$, the pullback bundle of
$\widetilde{\mathbb{C}}^{n+1}$ via $\varphi$ will be denoted also by
$\underline{\mathbb C}^{n+1}$, whereas those of $L$, $\bar L$ and $L^{\bot}$ via
$\varphi$ will still be denoted by the same notations, respectively.
Using Lemma \ref{lem:3.1} we have a global unitary frame
$\{e_0,e_A\}$ of $\underline{\mathbb C}^{n+1}$ with $\varphi=[e_0]$ such that
\eqref{eqn:3.7} holds. Then the differential (tangent map) of $\varphi$,
namely that
$$
d\varphi:{TS^3}\rightarrow{T\mathbb CP^n\cong_\mathbb{R} T^{(1,0)}\mathbb CP^n=\bar L\otimes
L^{\bot}},
$$
is given by $d\varphi=\omega_1(\bar e_0\otimes e_1)+\omega(\bar e_0\otimes
e_2)$, or equivalently,
\begin{equation}\label{eqn:3.25}
\varphi_*X_1=\bar e_0\otimes e_1,\ \ \varphi_*X_2=\bar e_0\otimes e_2,\ \
\varphi_*X_3=i\bar e_0\otimes e_2.
\end{equation}
Then, the tangent bundle and normal bundle of $\varphi(S^3)$ are
given, respectively, by
\begin{equation}\label{eqn:3.26}
\begin{aligned}
&\varphi_*(TS^3)={\rm span}_\mathbb{R}\{\bar e_0\otimes e_1,\bar e_0\otimes
e_2,i\bar e_0\otimes e_2\}\subset\varphi^{-1}T\mathbb CP^n,\\
&T^{\bot}S^3={\rm span}_\mathbb{R}\{i\bar e_0\otimes e_1,\bar e_0\otimes
e_\alpha,i\bar e_0\otimes e_\alpha\}\subset\varphi^{-1}T\mathbb CP^n.
\end{aligned}
\end{equation}

Denote by $\bar\nabla$ the connection on the pullback bundle
$\varphi^{-1}T\mathbb CP^n$. From \eqref{eqn:3.24} and \eqref{eqn:3.25},
together with \eqref{eqn:3.7}, we see that
\begin{equation}\label{eqn:3.27}
\left\{
\begin{aligned}
&\bar\nabla(\varphi_*X_1)=i(\rho_1-\rho_0)(\bar e_0\otimes
e_1)+\theta_{12}(\bar e_0\otimes e_2)+
\sum\theta_{1\beta}(\bar e_0\otimes e_\beta),\\
&\bar\nabla(\varphi_*X_2)=-\bar\theta_{12}(\bar e_0\otimes
e_1)+i(\rho_2-\rho_0)(\bar e_0\otimes e_2)
+\sum\theta_{2\beta}(\bar e_0\otimes e_\beta),\\
&\bar\nabla(\varphi_*X_3)=-i\bar\theta_{12}(\bar e_0\otimes
e_1)-(\rho_2-\rho_0)(\bar e_0\otimes e_2)+i\sum\theta_{2\beta}(\bar
e_0\otimes e_\beta).
\end{aligned}
\right.
\end{equation}

Denote by $B$ the second fundamental form of $\varphi$. Then the Gauss
formula
$$
\bar\nabla_{X_j}(\varphi_*X_i)=\varphi_*(\nabla_{X_j} X_i)+B(X_i,X_j)
$$
implies that the tangential component of the right hand side of
\eqref{eqn:3.27} is $\varphi_*(\nabla X_j)$.

From \eqref{eqn:3.15} and \eqref{eqn:3.25}, we get
$$
\theta_{12}(\bar e_0\otimes e_2)=\varphi_*(\nabla X_1)=-\sigma(\bar e_0\otimes e_2).
$$
It follows that
\begin{equation}\label{eqn:3.28}
\theta_{12}=-\sigma=-(\omega_{12}+i\omega_{13}).
\end{equation}

Similarly, the above fact, together with \eqref{eqn:3.25} and
$\nabla X_2=\omega_{12}X_1-\omega_{23}X_3$, gives
$$
i(\rho_2-\rho_0)(\bar e_0\otimes e_2)=\varphi_*(\nabla X_2-\omega_{12}X_1)=-i\omega_{23}(\bar e_0\otimes e_2).
$$
Thus we get
\begin{equation}\label{eqn:3.29}
\rho_0-\rho_2=\omega_{23}.
\end{equation}

Note that the second fundamental form can be expressed by
$$
B=\sum[\bar\nabla(\varphi_*X_i)-\varphi_*(\nabla X_i)]\otimes\omega_i.
$$
By using \eqref{eqn:3.27} and \eqref{eqn:3.28}, we have
\begin{equation}\label{eqn:3.30}
\begin{aligned}
B=[(\rho_1-\rho_0)\otimes\omega_1&-\omega_{13}\otimes\omega_2+\omega_{12}\otimes\omega_3](i\,\bar
e_0\otimes e_1) \\
&+\sum(\theta_{1\alpha}\otimes\omega_1+\theta_{2\alpha}\otimes\omega)(\bar e_0\otimes
e_\alpha).
\end{aligned}
\end{equation}

Since, by \eqref{eqn:3.16}, $\sigma=\omega_{12}+i\omega_{13}\ne0$, and that
$\det C\not=0$ implies that $\{c_{ij}^1\}$ can not be all zero, it
follows that $B\ne0$. Hence we have
\begin{theorem}\label{thm:3.1}
An equivariant CR immersion $\varphi:S^3\rightarrow\mathbb CP^n$ can not be totally
geodesic.
\end{theorem}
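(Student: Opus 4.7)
The plan is to combine the explicit formula \eqref{eqn:3.30} for the second fundamental form $B$ with the non-degeneracy of the matrix $C$ from Section~\ref{sect:2.2}. Since totally geodesic is equivalent to $B\equiv 0$, it suffices to exhibit one non-vanishing component of $B$. The natural candidate is the coefficient of the normal vector $i\bar e_0\otimes e_1$, namely the $T^*S^3$-valued $1$-form $(\rho_1-\rho_0)\otimes\omega_1-\omega_{13}\otimes\omega_2+\omega_{12}\otimes\omega_3$. Its ``$\omega_2,\omega_3$-part'' is controlled by the complex $1$-form $\sigma=\omega_{12}+i\omega_{13}$ appearing in \eqref{eqn:3.15} and \eqref{eqn:3.16}, so I would reduce the problem to showing $\sigma\not\equiv0$.

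Next I would unpack \eqref{eqn:3.16}, which gives
\[
\sigma=\mu\,\omega_1+\tfrac{i}{2}c_{23}^1\,\omega-\tau\,\bar\omega,\qquad \mu=c_{12}^1+ic_{13}^1.
\]
Vanishing of $\sigma$ would in particular force $c_{12}^1=c_{13}^1=c_{23}^1=0$. Now I would appeal to Section~\ref{sect:2.2}: the operator $d:\mathfrak{su}(2)^*\to\mathfrak{su}(2)^*\wedge\mathfrak{su}(2)^*$ is an isomorphism, hence $\det C\neq 0$, where $C$ is the structure matrix in \eqref{eqn:2.14}. Consequently the first row $(c_{23}^1,c_{31}^1,c_{12}^1)$ cannot vanish identically. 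Using the symmetry relations \eqref{eqn:2.16} from Proposition~\ref{prop:2.1}(i), the entry $c_{31}^1$ equals $c_{23}^2=-c_{13}^1$ up to sign, so non-vanishing of the first row of $C$ is the same as non-vanishing of at least one of $c_{12}^1,c_{13}^1,c_{23}^1$. This contradicts $\sigma\equiv 0$, so $\sigma\not\equiv 0$.

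Finally, from $\sigma\not\equiv 0$ and \eqref{eqn:3.30} we get that the $i\bar e_0\otimes e_1$-component of $B$ is a nonzero $(0,2)$-tensor on $S^3$, hence $B\not\equiv 0$ and $\varphi$ cannot be totally geodesic. The main obstacle is really only the brief bookkeeping step of translating ``$\sigma=0$'' into ``first row of $C$ vanishes'' through the symmetry relations \eqref{eqn:2.16}; once this identification is in place, the argument is a direct application of Proposition~\ref{prop:2.1} and the non-degeneracy of $C$. No further analytic work is needed, and the proof fits in a few lines.
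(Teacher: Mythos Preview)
Your proposal is correct and follows essentially the same approach as the paper: the paper's proof is simply the one-line observation (preceding Theorem~\ref{thm:3.1}) that, by \eqref{eqn:3.16} and the fact that $\det C\neq 0$ forces the set $\{c_{ij}^1\}$ (i.e., the first row of $C$) to be nonzero, one has $\sigma\neq 0$, whence $B\neq 0$ by \eqref{eqn:3.30}. Your write-up just unpacks this sentence, including the use of the symmetry relations \eqref{eqn:2.16} to identify $c_{31}^1$ with $-c_{13}^1$.
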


From \eqref{eqn:3.26} we see that the set of normal vector fields:
\begin{equation}\label{eqn:3.31}
\{\xi_0:=i\bar{e}_0\otimes e_1=J(\varphi_*X_1),\
\xi_\alpha:=\bar{e}_0\otimes e_\alpha,\
J\xi_\alpha=i\bar{e}_0\otimes e_\alpha\}
\end{equation}
defines an orthonormal frame of $T^{\bot}S^3$. Moreover, by
\eqref{eqn:3.24}, we have
$$
\begin{aligned}
\bar\nabla\xi_0&=(\rho_0-\rho_1)(\bar e_0\otimes e_1)+i\theta_{12}(\bar e_0\otimes e_2)+i{\textstyle\sum}\theta_{1\alpha}(\bar e_0\otimes e_\alpha),\\
\bar\nabla\xi_\alpha&=-\bar\theta_{1\alpha}\bar e_0\otimes e_1-\bar\theta_{2\alpha}\bar e_0\otimes e_2
+{\textstyle\sum}(\theta_{\alpha\beta}-i\delta_{\alpha\beta}\rho_0)(\bar e_0\otimes e_\beta).
\end{aligned}
$$
Then, according to the Weingarten formula
$$
\bar\nabla\xi=-\varphi_*\circ A_\xi+\nabla^\bot\xi,\ \ \xi\in
T^\bot S^3,
$$
we get
\begin{equation}\label{eqn:3.32}
\nabla^\bot\xi_0=\tfrac i2\sum(\theta_{1\alpha}-\bar\theta_{1\alpha})\xi_\alpha
        +\tfrac12\sum(\theta_{1\alpha}+\bar\theta_{1\alpha})J\xi_\alpha,
\end{equation}
$$
\begin{aligned}
&A_{\xi_\alpha}=\tfrac12\sum(\theta_{1\alpha}+\bar\theta_{1\alpha})X_1+\tfrac12\sum
        (\theta_{2\alpha}+\bar\theta_{2\alpha})X_2-\tfrac i2\sum(\theta_{2\alpha}-\bar\theta_{2\alpha})X_3, \\
&A_{J\xi_\alpha}=-\tfrac i2\sum(\theta_{1\alpha}-\bar\theta_{1\alpha})X_1+\tfrac
        i2\sum(\theta_{2\alpha}-\bar\theta_{2\alpha})X_2+\tfrac12\sum(\theta_{2\alpha}+\bar\theta_{2\alpha})X_3.
\end{aligned}
$$

Put $W_1:={\rm span}\{\xi_0\}$ and $W_2:=W_1^\perp$ in
$T^{\bot}S^3$. If $A_\xi=0$ for any $\xi\in W_2$, the submanifold
$\varphi(S^3)$ is said to be {\it totally geodesic with respect to
$W_2$}. Then, from the above calculations, we have the following
observations:
\begin{proposition}\label{prop:3.2}
{\rm(i)} $\xi_0$ is parallel in $T^{\bot}S^3$ $($that is
$\nabla^\perp\xi_0=0)$ if and only if $\theta_{1\alpha}=0$. {\rm(ii)} The
submanifold $\varphi(S^3)$ is totally geodesic with respect to $W_2$
if and only if $\theta_{1\alpha}=\theta_{2\alpha}=0$.
\end{proposition}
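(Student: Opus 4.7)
The plan is to derive both assertions directly from the explicit formulas obtained just above the proposition, namely the formula \eqref{eqn:3.32} for $\nabla^\bot\xi_0$ and the two formulas below it for the shape operators $A_{\xi_\alpha}$ and $A_{J\xi_\alpha}$. In each case the argument amounts to separating real and imaginary parts of the relevant $\mathbb{C}$-valued $1$-forms $\theta_{1\alpha}$, $\theta_{2\alpha}$ and matching coefficients against the orthonormal frames specified in \eqref{eqn:3.31} and $\{X_1,X_2,X_3\}$.

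For part (i), first I would note that $\{\xi_0,\xi_\alpha,J\xi_\alpha\}$ is an orthonormal frame of $T^\bot S^3$ by \eqref{eqn:3.31}, and that $\nabla^\bot\xi_0$ has no $\xi_0$-component since $|\xi_0|^2\equiv 1$. Hence \eqref{eqn:3.32} already expresses $\nabla^\bot\xi_0$ as a linear combination over the basis $\{\xi_\alpha,J\xi_\alpha\}$ of $W_2$. The coefficients are $\tfrac{i}{2}(\theta_{1\alpha}-\bar\theta_{1\alpha})$ and $\tfrac12(\theta_{1\alpha}+\bar\theta_{1\alpha})$, which are (up to sign) exactly $\operatorname{Im}\theta_{1\alpha}$ and $\operatorname{Re}\theta_{1\alpha}$. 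Since these are $\mathbb{R}$-valued $1$-forms on $S^3$, the condition $\nabla^\bot\xi_0=0$ is equivalent to all of them vanishing identically, i.e.\ to $\theta_{1\alpha}=0$ for every $\alpha$.

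For part (ii), since $W_2=\operatorname{span}\{\xi_\alpha,J\xi_\alpha\}$, the submanifold is totally geodesic with respect to $W_2$ precisely when the tangent-valued operators $A_{\xi_\alpha}$ and $A_{J\xi_\alpha}$ vanish for every $\alpha$. Reading off the formulas for $A_{\xi_\alpha}$ and $A_{J\xi_\alpha}$, vanishing of $A_{\xi_\alpha}$ forces $\operatorname{Re}\theta_{1\alpha}=0$ (the $X_1$-coefficient) together with both $\operatorname{Re}\theta_{2\alpha}=0$ and $\operatorname{Im}\theta_{2\alpha}=0$ (the $X_2$- and $X_3$-coefficients), while vanishing of $A_{J\xi_\alpha}$ forces $\operatorname{Im}\theta_{1\alpha}=0$ together with $\theta_{2\alpha}=0$ again. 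Combining the two yields $\theta_{1\alpha}=\theta_{2\alpha}=0$, and conversely these conditions obviously kill both operators.

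There is essentially no obstacle here: the entire content of the proposition is the observation that the two geometric conditions $\nabla^\bot\xi_0=0$ and total geodesy with respect to $W_2$ translate, via the linearly independent real/imaginary components in the already-derived formulas, into vanishing of precisely the listed subsets of the $\theta_{1\alpha},\theta_{2\alpha}$. The only minor point worth double-checking is that the $1$-forms $\omega_1,\omega_2,\omega_3$ are pointwise linearly independent on $S^3$, so that vanishing of a linear combination of them with constant coefficients really does force each coefficient to be zero; this is immediate from $\{\omega_i\}$ being a global coframe.
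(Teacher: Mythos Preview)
Your proposal is correct and follows exactly the paper's approach: the paper does not give a separate proof but simply states the proposition as an immediate observation ``from the above calculations,'' namely \eqref{eqn:3.32} and the shape-operator formulas. You have faithfully spelled out that observation by separating real and imaginary parts and matching against the frames.

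One small clarification on your last paragraph: the relevant linear independence is that of the frame vectors $\{\xi_\alpha,J\xi_\alpha\}$ in $W_2$ (for part (i)) and $\{X_1,X_2,X_3\}$ in $TS^3$ (for part (ii)), so that the \emph{$1$-form} coefficients in those expansions must vanish; the independence of the $\omega_i$ as a coframe is not what is being used at that step, though it would enter if you further expanded each $\theta_{1\alpha},\theta_{2\alpha}$ in the basis $\{\omega_i\}$.
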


\subsection{The Gauss-Codazzi equations}\label{sect:3.4}~

Exterior differentiation of \eqref{eqn:3.7}, and using
\eqref{eqn:3.28}, leads to the following eqs.:
\begin{equation}\label{eqn:3.33}
id\rho_0=-\omega\wedge\bar\omega=2i\omega_2\wedge\omega_3,
\end{equation}
\begin{equation}\label{eqn:3.34}
d\omega_1=i(\rho_0-\rho_1)\wedge\omega_1-\bar\sigma\wedge\omega,\ \
\theta_{1\alpha}\wedge\omega_1+\theta_{2\alpha}\wedge\omega=0,
\end{equation}
\begin{equation}\label{eqn:3.35}
d\omega=\sigma\wedge\omega_1+i(\rho_0-\rho_2)\wedge\omega,
\end{equation}
\begin{equation}\label{eqn:3.36}
id\rho_1=-\sigma\wedge\bar\sigma-\sum\theta_{1\alpha}\wedge\bar\theta_{1\alpha},\ \
id\rho_2=\omega\wedge\bar\omega+\sigma\wedge\bar\sigma-\sum\theta_{2\alpha}\wedge\bar\theta_{2\alpha},
\end{equation}
\begin{equation}\label{eqn:3.37}
d\sigma=\omega_1\wedge\omega+i(\rho_1-\rho_2)\wedge\sigma+\sum\theta_{1\alpha}\wedge\bar\theta_{2\alpha},
\end{equation}
\begin{equation}\label{eqn:3.38}
\left\{
\begin{aligned}
d\theta_{1\alpha}&=i\rho_1\wedge\theta_{1\alpha}-\sigma\wedge\theta_{2\alpha}+\sum\theta_{1\beta}\wedge\theta_{\beta\alpha},\\
d\theta_{2\alpha}&=i\rho_2\wedge\theta_{2\alpha}+\bar\sigma\wedge\theta_{1\alpha}+\sum\theta_{2\beta}\wedge\theta_{\beta\alpha},
\end{aligned}
\right.
\end{equation}
\begin{equation}\label{eqn:3.39}
d\theta_{\alpha\beta}=\theta_{\alpha1}\wedge\theta_{1\beta}+\theta_{\alpha2}\wedge\theta_{2\beta}+\sum\theta_{\alpha\gamma}\wedge\theta_{\gamma\beta}.
\end{equation}

From \eqref{eqn:3.30} we see that the symmetry of $B$ is given by
\eqref{eqn:3.34}. In fact, by using \eqref{eqn:2.6}, the first
equation of \eqref{eqn:3.34} means that
$$
(\rho_1-\rho_0)\wedge\omega_1=i(d\omega_1+\bar\sigma\wedge\omega)=\omega_{13}\wedge\omega_2-\omega_{12}\wedge\omega_3.
$$
Substituting \eqref{eqn:2.10} into the above equation, and using
\eqref{eqn:2.16}, we get
\begin{equation}\label{eqn:3.40}
(\rho_1-\rho_0)\wedge\omega_1=(c_{23}^2\omega_2+c_{23}^3\omega_3)\wedge\omega_1.
\end{equation}

The exterior differentiation of \eqref{eqn:3.29}, together with
\eqref{eqn:3.33}, \eqref{eqn:3.36}, \eqref{eqn:3.37} and
\eqref{eqn:3.20}, gives the Gauss equation
\begin{equation}\label{eqn:3.41}
\left\{
\begin{array}{@{\extracolsep{3pt}}r@{\extracolsep{3pt}}c@{\extracolsep{3pt}}l}
\Omega_{23}=&d\omega_{23}-\tfrac
i2\sigma\wedge\bar\sigma&=2i\omega\wedge\bar\omega+\frac i2\sigma\wedge\bar\sigma
-i\sum\theta_{2\alpha}\wedge\bar\theta_{2\alpha},\\[3pt]
\Omega_{12}+i\Omega_{13}=&d\sigma+i\sigma\wedge\omega_{23}&=i(\rho_1-\rho_0)\wedge\sigma+\omega_1\wedge\omega
+\sum\theta_{1\alpha}\wedge\bar\theta_{2\alpha}.
\end{array}
\right.
\end{equation}

Similarly, from \eqref{eqn:3.27}, noting that $\sigma=-\theta_{12}$, we
see that \eqref{eqn:3.37} and \eqref{eqn:3.38} are part of the
Codazzi equations, whereas the rest one is
$$
d(\rho_0-\rho_1)=i(\omega\wedge\bar\omega-\sigma\wedge\bar\sigma-{\textstyle\sum}\theta_{1\alpha}\wedge\bar\theta_{1\alpha}).
$$

\section{Equivariant CR minimal immersions from $S^3$ into $\mathbb CP^n$}\label{sect:4}

The purpose of this section is the proof of Theorem \ref{thm:4.1},
which concerns with a roughly classification of equivariant CR
minimal immersions from $S^3$ into $\mathbb CP^n$. Recall that a
equivariant CR immersion $\varphi:S^3\rightarrow\mathbb CP^n$ is minimal if and
only if ${\rm trace}\,B=0$, that is, by \eqref{eqn:3.30},
$$
\left\{
\begin{aligned}
&{\rm trace}\,[(\rho_1-\rho_0)\otimes\omega_1-\omega_{13}\otimes\omega_2+\omega_{12}\otimes\omega_3]=0,\\
&{\rm trace}\,(\theta_{1\alpha}\otimes\omega_1+\theta_{2\alpha}\otimes\omega)=0.
\end{aligned}
\right.
$$
This, by the use of \eqref{eqn:2.10} and \eqref{eqn:2.11}, is
equivalent to
\begin{equation}\label{eqn:4.1}
(\rho_1-\rho_0)(X_1)=\omega_{13}(X_2)-\omega_{12}(X_3)=c_{23}^1,\ \
\theta_{1\alpha}(X_1)+2\theta_{2\alpha}(\bar Z)=0.
\end{equation}

The above two equations, together with \eqref{eqn:3.40},
\eqref{eqn:2.10} and the second equation of \eqref{eqn:3.34}, lead
to
\begin{equation}\label{eqn:4.2}
\rho_1-\rho_0=c_{23}^1\omega_1+c_{23}^2\omega_2+c_{23}^3\omega_3=\omega_{23}-a\omega_1,
\end{equation}
\begin{equation}\label{eqn:4.3}
\theta_{1\alpha}=\lambda_\alpha\omega,\ \ \theta_{2\alpha}=\lambda_\alpha\omega_1+\mu_\alpha\omega.
\end{equation}
where $\lambda_\alpha,\;\mu_\alpha\in\mathbb C$ are constant.

Recall that a submanifold $M\hookrightarrow\mathbb CP^n$ is called {\it
linearly full} if there is no totally geodesic $\mathbb C P^k\;(k<n)$
such that $M\hookrightarrow\mathbb C P^k\subset\mathbb CP^n$. For such a
submanifold the dimension $n$ is called the {\it full dimension}.
Henceforth, we assume that $\varphi:S^3\rightarrow\mathbb CP^n$ is an equivariant
CR minimal immersion with full dimension $n$.

First of all, as the continuation of Proposition \ref{prop:3.2}
(ii), we have

\begin{lemma}\label{lem:4.1}
{\rm (i)} If $\varphi:S^3\rightarrow\mathbb CP^n$ is an equivariant CR immersion
with the full dimension $n$ and that $\varphi(S^3)$ is totally
geodesic with respect to $W_2$, then $n=2$.

{\rm (ii)} If $\varphi:S^3\to\mathbb{C}P^2$ is an equivariant CR
minimal immersion, then the structure constants $\{c_{ij}^k\}$ with
respect to an orthonormal basis $\{X_1,X_2,X_3\}$ and coefficients
forms of equation \eqref{eqn:3.7} satisfy:
\begin{equation}\label{eqn:4.4}
\rho_0=\tfrac a3\omega_1,\ \rho_1=(\tfrac a3+c_{23}^1)\omega_1,\
\rho_2=-(\tfrac{2a}3+c_{23}^1)\omega_1, \ \sigma=\tfrac{i}2
c_{23}^1\omega-\tau\bar{\omega},
\end{equation}
\begin{equation}\label{eqn:4.5}
c_{12}^1=c_{13}^1=0,\ \ ac_{23}^1=-6,\ \
4\tau\bar\tau=3(c_{23}^1)^2-4,\ \ \tau(3c_{23}^1+2a)=0.
\end{equation}
\end{lemma}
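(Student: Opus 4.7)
For part (i), the plan is direct. The hypothesis that $\varphi(S^3)$ is totally geodesic with respect to $W_2$, together with Proposition~\ref{prop:3.2}(ii), immediately gives $\theta_{1\alpha}=\theta_{2\alpha}=0$ for every $\alpha\ge 3$. Inspection of the structure equations \eqref{eqn:3.7} then shows that $de_0,de_1,de_2$ all lie in $\mathrm{span}_{\mathbb C}\{e_0,e_1,e_2\}$, so this complex $3$-plane subbundle of $\underline{\mathbb C}^{n+1}$ is parallel for $d$ and hence constant on the connected manifold $S^3$. Consequently $\varphi=[e_0]$ lies in a totally geodesic $\mathbb CP^2\subset\mathbb CP^n$, and linear fullness forces $n=2$.

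For part (ii), the key step is to isolate $d\omega_1$ by combining the minimality identity \eqref{eqn:4.2} with the ``Maurer-Cartan'' relations \eqref{eqn:3.33}, \eqref{eqn:3.36} and the first line of the Gauss equation \eqref{eqn:3.41}. Minimality gives $\rho_1-\rho_0=\omega_{23}-a\omega_1$; I differentiate, multiply by $i$, and substitute $id\rho_0=-\omega\wedge\bar\omega$, $id\rho_1=-\sigma\wedge\bar\sigma$ (with the $\alpha$-sum empty since $n=2$), and $d\omega_{23}=2i\,\omega\wedge\bar\omega+i\,\sigma\wedge\bar\sigma$ extracted from \eqref{eqn:3.41}. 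The $\sigma\wedge\bar\sigma$-terms cancel neatly and one is left with $ia\,d\omega_1=-3\,\omega\wedge\bar\omega=6i\,\omega_2\wedge\omega_3$. Matching this against the general expression \eqref{eqn:2.13} for $d\omega_1$ in terms of structure constants yields simultaneously $ac_{23}^1=-6$, $c_{13}^1=0$ and $c_{12}^1=0$, which in particular establishes the first and second relations of \eqref{eqn:4.5}.

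With $c_{12}^1=c_{13}^1=0$ in hand, \eqref{eqn:3.16} gives $\mu=0$, hence $\sigma=\tfrac{i}{2}c_{23}^1\omega-\tau\bar\omega$, while the symmetry of $C$ from Proposition~\ref{prop:2.1}(i) forces $c_{23}^2=c_{23}^3=0$, so $\omega_{23}=(a+c_{23}^1)\omega_1$. Writing $\rho_0=r_1\omega_1+r_2\omega_2+r_3\omega_3$, the constraint $d\rho_0=2\,\omega_2\wedge\omega_3$ then reduces to a linear system whose coefficient matrix is a nonsingular lower-right $2\times 2$ block of $C$; its unique solution $r_2=r_3=0$, $r_1=a/3$ gives $\rho_0=\tfrac{a}{3}\omega_1$, and the formulas for $\rho_1,\rho_2$ in \eqref{eqn:4.4} follow from $\rho_1=\rho_0+\omega_{23}-a\omega_1$ and $\rho_2=\rho_0-\omega_{23}$. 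For the last two relations of \eqref{eqn:4.5}, I expand the Codazzi equation \eqref{eqn:3.37}, which in $\mathbb CP^2$ reduces to $d\sigma=\omega_1\wedge\omega+i(\rho_1-\rho_2)\wedge\sigma$, by substituting the simplified $\sigma$ and $\rho_1-\rho_2=(a+2c_{23}^1)\omega_1$; comparing coefficients in the basis $\{\omega_1\wedge\omega,\,\omega_1\wedge\bar\omega\}$, the $\omega_1\wedge\omega$-coefficient yields $4\tau\bar\tau=3(c_{23}^1)^2-4$ and the $\omega_1\wedge\bar\omega$-coefficient yields $\tau(3c_{23}^1+2a)=0$. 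The main obstacle I anticipate is the complex-form bookkeeping in the middle step, in particular verifying the clean cancellation of the $\sigma\wedge\bar\sigma$ contributions; but no new idea is required beyond carefully substituting the earlier identities into the Gauss-Codazzi system.
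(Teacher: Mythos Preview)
Your proposal is correct; the argument for part~(i) is essentially the paper's, while for part~(ii) you take a somewhat different route through the same system of structure equations.

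For (i) the paper also reduces to \eqref{eqn:4.6} via Proposition~\ref{prop:3.2}(ii), but then first proves $\rho_0+\rho_1+\rho_2=0$ from \eqref{eqn:3.33}, \eqref{eqn:3.36} (using that $d$ is injective on $\mathfrak{su}(2)^*$) and concludes by computing $d(e_0\wedge e_1\wedge e_2)=0$. Your direct observation that ${\rm span}\{e_0,e_1,e_2\}$ is $d$-parallel reaches the same conclusion without that extra identity. For (ii) the paper's organizing relation is again $\rho_0+\rho_1+\rho_2=0$, which together with \eqref{eqn:3.29} and \eqref{eqn:4.2} yields $3\rho_0=a\omega_1$; differentiating \emph{this} via \eqref{eqn:3.33} and \eqref{eqn:3.19} gives $\mu=0$ and $ac_{23}^1=-6$. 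The two final identities are then obtained separately: $4\tau\bar\tau=3(c_{23}^1)^2-4$ from $d\rho_1$ in \eqref{eqn:3.36}, and $\tau(3c_{23}^1+2a)=0$ from the second line of \eqref{eqn:3.41} compared with \eqref{eqn:3.22} and the curvature formulas \eqref{eqn:2.20}. Your route instead differentiates the minimality relation $\rho_1-\rho_0=\omega_{23}-a\omega_1$ and feeds in the \emph{first} line of \eqref{eqn:3.41} to isolate $d\omega_1$, then recovers $\rho_0$ by a linear system built from \eqref{eqn:2.13} and the nonsingularity of $C$, and finally extracts both remaining identities at once from the Codazzi equation \eqref{eqn:3.37}. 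The upshot: your path is a bit more economical at the end (one equation yields two constraints), while the paper's path, by isolating $\rho_0+\rho_1+\rho_2=0$ early, makes the determination of $\rho_0,\rho_1,\rho_2$ almost immediate and avoids explicitly inverting the $2\times 2$ block of $C$.
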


\begin{proof}
(i) If $\varphi(S^3)$ is totally geodesic with respect to $W_2$,
then, from Proposition \ref{prop:3.2}, we have
$\theta_{1\alpha}=\theta_{2\alpha}=0$. Since $\theta_{12}=-\sigma$, the first
three equations in \eqref{eqn:3.7} become
\begin{equation}\label{eqn:4.6}
\left\{
\begin{aligned}
de_0&=i \rho_0  e_0 + \omega_1 e_1 + \omega e_2,\\
de_1&=-\omega_1 e_0+ i\rho_1 e_1 - \sigma e_2,\\
de_2&=- \bar\omega e_0 + \bar\sigma e_1 +i\rho_2 e_2.
\end{aligned}
\right.
\end{equation}
Then by \eqref{eqn:3.33} and \eqref{eqn:3.36} we get
$d(\rho_0+\rho_1+\rho_2)=0$. It follows by the isomorphism of
$d:\mathfrak{su}(2)^*\to{\mathfrak{su}(2)^*\wedge
\mathfrak{su}(2)^*}$ that $\rho_0+\rho_1+\rho_2=0$. Then we get
$d(e_0\wedge e_1\wedge e_2)=0$, showing that ${\it
span}\{e_0,e_1,e_2\}$ is a constant subspace. The fullness condition
thus implies that $n=2$.

(ii) In this situation, as above we still have
$\rho_0+\rho_1+\rho_2=0$. Thus, by \eqref{eqn:3.29} and
\eqref{eqn:4.2}, it holds
\begin{equation}\label{eqn:4.7}
3\rho_0=(\rho_0-\rho_2)-(\rho_1-\rho_0)=a\omega_1.
\end{equation}

Taking exterior differentiation of \eqref{eqn:4.7}, together with
the use of \eqref{eqn:3.33} and the first equation of
\eqref{eqn:3.19}, gives
\begin{equation}\label{eqn:4.8}
\mu=c_{12}^1+ic_{13}^1=0,\ \ ac_{23}^1=-6.
\end{equation}
Then, by \eqref{eqn:3.16}, we have
\begin{equation}\label{eqn:4.9}
\omega_{23}=(c_{23}^1+a)\omega_1, \ \ \sigma=\tfrac
i2c_{23}^1\omega-\tau\bar{\omega},\ \
\sigma\wedge\bar\sigma=\tfrac14[(c_{23}^1)^2-4\tau\bar\tau]\omega\wedge\bar\omega.
\end{equation}

Since $\mu=0$, from \eqref{eqn:3.17} and \eqref{eqn:2.16}, we see
that $c_{23}^2=c_{23}^3=0$. Then, by \eqref{eqn:4.2},
\eqref{eqn:3.29} and that $\rho_0=(a/3)\omega_1$, we get
\begin{equation}\label{eqn:4.10}
\rho_1=\rho_0+c_{23}^1\omega_1=(\tfrac a3+c_{23}^1)\omega_1,\ \
\rho_2=\rho_0-\omega_{23}=-(\tfrac{2a}3+c_{23}^1)\omega_1.
\end{equation}
This completes the proof of \eqref{eqn:4.4}.

Next, by \eqref{eqn:3.19}, $d\omega_1=-\tfrac
i2c_{23}^1\omega\wedge\bar\omega$. Then, using $ac_{23}^1=-6$,
\eqref{eqn:4.10}, \eqref{eqn:3.36} and \eqref{eqn:4.9}, we get
$$
-\tfrac i2c_{23}^1(\tfrac
a3+c_{23}^1)\omega\wedge\bar\omega=d\rho_1=i\sigma\wedge\bar\sigma=\tfrac
i4[(c_{23}^1)^2-4\tau\bar\tau]\omega\wedge\bar\omega,
$$
and it follows that $4\tau\bar\tau=3(c_{23}^1)^2-4$.

Finally, by using \eqref{eqn:4.4} and \eqref{eqn:4.9}, the second
equation of \eqref{eqn:3.41} becomes
$$
\Omega_{12}+i\Omega_{13}=[1-\tfrac12(c_{23}^1)^2]\omega_1\wedge\omega-i\tau c_{23}^1\omega_1\wedge\bar{\omega}.
$$
This, if compared it with \eqref{eqn:3.22}, gives
$$
R_{1212}-R_{1313}+2iR_{1213}=-2i\tau c_{23}^1.
$$

On the other hand, using $\mu=0$ and \eqref{eqn:2.16}, from
\eqref{eqn:2.20} we obtain
$$
R_{1212}-R_{1313}+2iR_{1213}=(c_{12}^3-c_{31}^2+2ic_{21}^2)(2c_{23}^1+2a)=2i{\tau}(2c_{23}^1+2a).
$$
Then the last equation of \eqref{eqn:4.5} follows.
\end{proof}

\begin{corollary}\label{cor:4.1}
Let $\varphi:S^3\rightarrow\mathbb{C}P^2$ be an equivariant CR minimal
immersion.

{\rm (i)} If $\varphi(S^3)$ is not a Berger sphere, then there is an
orthonormal frame $\{\omega_i\}$ such that, in \eqref{eqn:4.6}, it
holds
\begin{equation}\label{eqn:4.11}
\rho_0=\omega_1,\ \ \rho_1=-\omega_1,\ \ \rho_2=0,\ \
\sigma=-i\omega-\sqrt2\bar\omega.
\end{equation}

{\rm (ii)} If $\varphi(S^3)$ is a Berger sphere, then there is an
orthonormal frame $\{\omega_i\}$ such that, in \eqref{eqn:4.6},
\begin{equation}\label{eqn:4.12}
c_{23}^1=-\tfrac2{\sqrt3},\ c_{31}^2=c_{12}^3=-\tfrac8{\sqrt3},\
\rho_0=\sqrt{3}\omega_1, \ \rho_1=\tfrac1{\sqrt{3}}\omega_1,\ \sigma=-\tfrac
i{\sqrt{3}}\omega.
\end{equation}
\end{corollary}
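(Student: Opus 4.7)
The plan is to leverage the algebraic relations \eqref{eqn:4.4}--\eqref{eqn:4.5} of Lemma \ref{lem:4.1}(ii) as a small system in the three unknowns $a$, $c_{23}^1$ and $\tau$, to detect the Berger/non-Berger dichotomy through the vanishing of $\tau$, and then to exploit the residual frame freedom \eqref{eqn:3.23} to normalize the remaining signs and phase.

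The first key observation is that, since Lemma \ref{lem:4.1}(ii) already gives $c_{12}^1=c_{13}^1=0$, the definition \eqref{eqn:3.17} of $\tau=c_{21}^2+\tfrac{i}{2}(c_{31}^2-c_{12}^3)$ combined with the Berger criterion of Proposition \ref{prop:3.1}(ii) yields the clean dichotomy
\[
(S^3,ds^2)\ \text{is Berger}\ \Longleftrightarrow\ \tau=0,
\]
which splits the argument into the two cases of the corollary.

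In case (i) we have $\tau\neq0$, so the last relation of \eqref{eqn:4.5} forces $3c_{23}^1+2a=0$, i.e.\ $a=-\tfrac32 c_{23}^1$. Substituting into $ac_{23}^1=-6$ gives $(c_{23}^1)^2=4$, and then $4|\tau|^2=3(c_{23}^1)^2-4=8$, so $|\tau|=\sqrt2$. Under the frame change \eqref{eqn:3.23} with $\tilde X_1=\varepsilon X_1$ and $Z'=e^{-it}Z$, a direct computation from the formula $\tau=2\,ds^2(\nabla_{\bar Z}X_1,\bar Z)$ in Proposition \ref{prop:3.1} shows that $c_{23}^1\mapsto\varepsilon c_{23}^1$ while $\tau\mapsto\varepsilon e^{2it}\tau$. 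Choosing $\varepsilon$ to arrange $c_{23}^1=-2$ (hence $a=3$) and $t$ to arrange $\tau=\sqrt2$, substitution into \eqref{eqn:4.4} delivers \eqref{eqn:4.11}.

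In case (ii) we have $\tau=0$, so the third equation of \eqref{eqn:4.5} reduces to $3(c_{23}^1)^2=4$, i.e.\ $(c_{23}^1)^2=4/3$. After using $\varepsilon$ to fix sign, set $c_{23}^1=-2/\sqrt3$; then $ac_{23}^1=-6$ gives $a=3\sqrt3$. Since now $c_{31}^2=c_{12}^3$ by Proposition \ref{prop:3.1}(ii), expansion of $a=-\tfrac12(c_{23}^1+c_{31}^2+c_{12}^3)$ via \eqref{eqn:2.11} yields $c_{31}^2=c_{12}^3=-8/\sqrt3$. Substituting these values into \eqref{eqn:4.4} produces $\rho_0=\sqrt3\,\omega_1$, $\rho_1=(1/\sqrt3)\omega_1$ and $\sigma=-(i/\sqrt3)\omega$, which is \eqref{eqn:4.12}.

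The one step requiring care is the frame-normalization in case (i): I must verify that \eqref{eqn:3.23} genuinely acts by $(\varepsilon,\varepsilon e^{2it})$ on $(c_{23}^1,\tau)$ without disturbing the hypotheses $c_{12}^1=c_{13}^1=0$ of Lemma \ref{lem:4.1}, and in particular without breaking the CR-frame conditions $FX_1=0$, $FX_2=X_3$ used throughout Section \ref{sect:3}. The invariance of $|\mu|$ under \eqref{eqn:3.23} (Proposition \ref{prop:3.1}(iii)) handles the first concern, and the very definition of \eqref{eqn:3.23} handles the second. Once this is confirmed, both cases follow by direct substitution.
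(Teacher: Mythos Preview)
Your proof is correct and follows essentially the same route as the paper: use Lemma \ref{lem:4.1}(ii) to reduce to the algebraic system \eqref{eqn:4.4}--\eqref{eqn:4.5}, invoke Proposition \ref{prop:3.1}(ii) to characterize the Berger/non-Berger split by $\tau=0$ versus $\tau\neq 0$, solve the system in each case, and normalize the remaining sign and phase via the frame freedom \eqref{eqn:3.23}. Your explicit computation of the transformation law $(c_{23}^1,\tau)\mapsto(\varepsilon c_{23}^1,\varepsilon e^{2it}\tau)$ and your closing check that \eqref{eqn:3.23} preserves both $\mu=0$ and the CR-frame condition add useful detail that the paper leaves implicit.
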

\begin{proof}
(i) By \eqref{eqn:4.5} it holds $c_{12}^1=c_{13}^1=0$. Then, it
follows from Proposition \ref{prop:3.1} that $\varphi(S^3)$ is not a
Berger sphere if and only if $\tau=c_{21}^2+\tfrac
i2(c_{31}^2-c_{12}^3)\not=0$. Then, by the third conclusion of
Proposition \ref{prop:3.1}, we can take a suitable frame
transformation \eqref{eqn:3.23} such that $c_{23}^1<0$ and
$\tau=|\tau|>0$. It follows from \eqref{eqn:4.5} that $c_{23}^1=-2,\
a=3$ and $\tau=\sqrt{2}$. Substituting these into \eqref{eqn:4.4} we
obtain \eqref{eqn:4.11}.

(ii) By replacing $X_1$ with $-X_1$ if necessary, we may assume that
$c_{23}^1<0$. Then by \eqref{eqn:4.5} and $\tau=0$ we have
$c_{23}^1=-2/\sqrt3$, $a=3\sqrt{3}$. Put these into \eqref{eqn:4.4}
we get \eqref{eqn:4.12}.
\end{proof}

The following proposition shows that $n=2$ is very exceptional, only
in that case a non-Berger sphere could be admitted.

\begin{proposition}\label{prop:4.1}
Any equivariant CR minimal
immersion $\varphi:S^3\rightarrow\mathbb CP^n$ with $n\ge3$ must be a Berger
sphere.
\end{proposition}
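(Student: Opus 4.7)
The goal is to show $\mu := c_{12}^1 + i c_{13}^1 = 0$ and $\tau := c_{21}^2 + \tfrac{i}{2}(c_{31}^2 - c_{12}^3) = 0$, for then Proposition \ref{prop:3.1}(ii) identifies $(S^3,ds^2)$ as a Berger sphere. The plan is to first derive $\tau = 0$ from the Codazzi equation for $\theta_{1\alpha}$, and then derive $\mu = 0$ from the Gauss equation for $\Omega_{12}+i\Omega_{13}$.

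For the first step, since $n\geq 3$ and $\varphi$ is linearly full, Lemma \ref{lem:4.1}(i) rules out $\varphi(S^3)$ being totally geodesic with respect to $W_2$; hence by Proposition \ref{prop:3.2}(ii) together with \eqref{eqn:4.3}, there exists some $\alpha$ with $(\lambda_\alpha,\mu_\alpha)\neq(0,0)$. I would substitute $\theta_{1\alpha} = \lambda_\alpha\omega$ and $\theta_{2\alpha} = \lambda_\alpha\omega_1 + \mu_\alpha\omega$ into the first equation of \eqref{eqn:3.38}, using $d\omega$ from \eqref{eqn:3.19} on the left and $\sigma = \mu\omega_1 + \tfrac{i}{2}c_{23}^1\omega - \tau\bar\omega$ on the right. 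The key observation is that $i\rho_1\wedge\theta_{1\alpha} = i\lambda_\alpha\rho_1\wedge\omega$ and $\sum\theta_{1\beta}\wedge\theta_{\beta\alpha} = \sum\lambda_\beta\,\omega\wedge\theta_{\beta\alpha}$ contain no $\omega_1\wedge\bar\omega$ component, so the $\omega_1\wedge\bar\omega$ comparison collapses to the clean identity $\lambda_\alpha\tau = 0$. If $\tau\neq 0$, this forces all $\lambda_\beta = 0$, whereupon the Codazzi equation reduces to $0 = -\mu_\alpha\,\sigma\wedge\omega = -\mu_\alpha\mu\,\omega_1\wedge\omega - \mu_\alpha\tau\,\omega\wedge\bar\omega$, so $\mu_\alpha = 0$ as well. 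This contradicts the existence of a nonzero $(\lambda_\alpha,\mu_\alpha)$, hence $\tau = 0$.

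Once $\tau = 0$ (so $c_{21}^2 = 0$ and $c_{31}^2 = c_{12}^3$), I compare the $\omega_1\wedge\bar\omega$ components of the Gauss equation (second line of \eqref{eqn:3.41}). Using $\rho_1 - \rho_0 = c_{23}^1\omega_1 - \tfrac{i}{2}\bar\mu\,\omega + \tfrac{i}{2}\mu\,\bar\omega$, the extrinsic coefficient evaluates to $\tfrac12\mu^2 - ic_{23}^1\tau = \tfrac12\mu^2$ (the remaining pieces $\omega_1\wedge\omega$ and $\sum\theta_{1\alpha}\wedge\bar\theta_{2\alpha}$ contribute nothing at this component). On the intrinsic side, \eqref{eqn:3.22} identifies the coefficient as $\tfrac12(R_{1212} - R_{1313} + 2iR_{1213})$, and a direct application of \eqref{eqn:2.20} combined with the symmetries in \eqref{eqn:2.16} yields $R_{1212} - R_{1313} + 2iR_{1213} = -2(c_{12}^1 + ic_{13}^1)^2 = -2\mu^2$. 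Equating $-\mu^2 = \tfrac12\mu^2$ forces $\mu = 0$, and Proposition \ref{prop:3.1}(ii) concludes. The main obstacle lies in this final intrinsic computation: the combination $R_{1212} - R_{1313} + 2iR_{1213}$ must be shown to telescope into the perfect square $-2\mu^2$, which is a delicate, sign-sensitive calculation requiring systematic use of \eqref{eqn:2.16} and the antisymmetry $c_{ij}^k = -c_{ji}^k$.
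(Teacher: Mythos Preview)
Your proof is correct and follows essentially the same route as the paper: obtain $\tau=0$ from the $\omega_1\wedge\bar\omega$ component of the Codazzi equation \eqref{eqn:3.38}, then $\mu=0$ from the $\omega_1\wedge\bar\omega$ component of the Gauss equation \eqref{eqn:3.41} combined with the intrinsic curvature formula \eqref{eqn:2.20}. The only cosmetic difference is that the paper splits into the cases $\nabla^\bot\xi_0=0$ and $\nabla^\bot\xi_0\ne0$ (in the former, all $\lambda_\alpha=0$ and the Codazzi equation alone gives $\sigma\wedge\omega=0$, hence $\mu=\tau=0$ simultaneously without invoking Gauss), whereas your contradiction argument handles both cases uniformly.
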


\begin{proof} We consider two cases for $n\ge3$:\ \ (a) $\nabla^\bot\xi_0=0$;
\ \ (b) $\nabla^\bot\xi_0\ne0$.

(a)\ \ If $\nabla^\bot\xi_0=0$, then according to Proposition
\ref{prop:3.2} and Lemma \ref{lem:4.1}, we have $\lambda_\alpha=0$
and $\sum|\mu_\alpha|^2>0$ in \eqref{eqn:4.3}. By re-choosing $e_\alpha$'s
in the frame $\{e_0,e_1,e_2,e_\alpha\}$ as stated in Lemma
\ref{lem:3.1} we may assume that
\begin{equation}\label{eqn:4.13}
\theta_{1\alpha}=0,\ \ \theta_{23}=\mu_3\omega,\ \ \theta_{24}=\cdots=\theta_{2n}=0,
\end{equation}
where $\mu_3\in\mathbb{R}$ is positive. By \eqref{eqn:3.38} we have
$\sigma\wedge\omega=0$, and thus by \eqref{eqn:3.16} it holds
$\mu=c_{12}^1+ic_{13}^1=0$ and $\tau=c_{21}^2+\tfrac
i2(c_{31}^2-c_{12}^3)=0$. It follows from Proposition \ref{prop:3.1}
(ii) that $\varphi(S^3)$ is a Berger sphere.

(b)\ \ In this case,  Proposition \ref{prop:3.2} implies that
$\sum|\lambda_\alpha|^2>0$. Then, by re-choosing $e_\alpha$'s of
$\{e_0,e_1,e_2,e_\alpha\}$ we have constants $\lambda_3>0$ and $\mu_4\ge0$
such that
\begin{equation}\label{eqn:4.14}
\left\{
\begin{aligned}
&\theta_{13}=\lambda_3\omega,\ \ \theta_{14}=\cdots=\theta_{1n}=0,\\
&\theta_{23}=\lambda_3\omega_1+\mu_{3}\omega,\ \ \theta_{24}=\mu_4\omega,\ \
\theta_{25}=\cdots=\theta_{2n}=0.
\end{aligned}
\right.
\end{equation}

Using \eqref{eqn:3.35} and \eqref{eqn:3.38} we have
$$
\lambda_3\sigma\wedge\omega_1+i\lambda_3(\rho_0-\rho_2)\wedge\omega=d\theta_{13}=i\lambda_3(\rho_1-\rho_3)\wedge\omega-\sigma\wedge(\lambda_3\omega_1+\mu_3\omega),
$$
where $i\rho_3=\theta_{33}$. Using \eqref{eqn:3.16} and comparing
the coefficient of $\omega_1\wedge\bar\omega$ on both side of the
above equation, we get $\tau=c_{21}^2+\tfrac
i2(c_{31}^2-c_{12}^3)=0$. Thus $c_{21}^2=-c_{31}^3=0$,
$c_{31}^2=c_{12}^3$.

It follows by \eqref{eqn:3.16} and \eqref{eqn:4.2} that
$$
\begin{array}{c}
\omega_{23}=(c_{23}^1+a)\omega_1-\tfrac i2\bar\mu\omega+\tfrac i2\mu\bar\omega,\ \ \sigma=\mu\omega_1+\tfrac{i}2c_{23}^1\omega, \\[2pt]
\rho_1-\rho_0=\omega_{23}-a\omega_1=c_{23}^1\omega_1-\tfrac i2\bar\mu\omega+\tfrac
i2\mu\bar\omega.
\end{array}
$$

Substituting these into the second equation of \eqref{eqn:3.41} and
then comparing the coefficient of $\omega_1\wedge\bar\omega$ with that in
\eqref{eqn:3.22}, we obtain
$$
R_{1212}-R_{1313}+2iR_{1213}=\mu^2=(c_{12}^1)^2+(c_{13}^1)^2+2ic_{12}^1c_{13}^1.
$$

On the other hand, from \eqref{eqn:2.20}, and using \eqref{eqn:3.17}
with the fact $\tau=0$, we get
$$
R_{1213}=-2c_{12}^1c_{13}^1, \quad
R_{1212}-R_{1313}=2(c_{13}^1)^2-2(c_{12}^1)^2.
$$
Then, combining with the previous equation, it implies that
$c_{12}^1=c_{13}^1=0$.

Thus, as in case (a), $\varphi(S^3)\subset\mathbb CP^n$ is a Berger
sphere.
\end{proof}

The following lemma is also needed in Sect. \ref{sect:6}.
\begin{lemma}\label{lem:4.2}
Let $\varphi:S^3\rightarrow\mathbb CP^n$ be an equivariant CR minimal immersion
with $n\ge3$. Then, there exist positive real numbers $b$ and $c$, a
normalized frame $\{\omega_i\}$ and a unitary frame $\{e_0,e_A\}$ of
$\underline{\mathbb{C}}^{n+1}$ such that $\varphi=[e_0]$ and
\begin{equation}\label{eqn:4.15}
c_{23}^1=-2b\sqrt{c},\ \ c_{31}^2=c_{12}^3=-2\sqrt{c}/b,
\end{equation}
\begin{equation}\label{eqn:4.16}
\left\{
\begin{aligned}
&de_0=\tfrac i{b\sqrt c}\omega_1e_0+\omega_1e_1+\omega\,e_2,\\
&de_1=-\omega_1e_0+\tfrac{i(1-2b^2c)}{b\sqrt
c}\omega_1e_1+ib\sqrt{c}\omega\,e_2+ \lambda_3\omega\,e_3,
\end{aligned}
\right.
\end{equation}
where $\lambda_3=\sqrt{1-3b^2c}\ge0$. Moreover, $\nabla^\perp\xi_0=0$ if
and only if $\lambda_3=0$.
\end{lemma}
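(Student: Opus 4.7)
The plan is to build the desired frame by combining Proposition \ref{prop:4.1} with the Berger criterion and the minimality equations already derived, and then to pin down $\lambda_3$ by computing $d\rho_1$ two different ways.

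First, since $n\ge3$, Proposition \ref{prop:4.1} forces $(S^3,ds^2)$ to be a Berger sphere, hence by Proposition \ref{prop:3.1}(ii) we have $c_{12}^1=c_{13}^1=c_{21}^2=0$ and $c_{31}^2=c_{12}^3$. After possibly replacing $X_1$ by $-X_1$ so that $c_{23}^1<0$, Proposition \ref{prop:2.1}(iv) forces $c_{31}^2=c_{12}^3<0$ as well, and I can normalize by setting $c_{23}^1=-2b\sqrt c$ and $c_{31}^2=c_{12}^3=-2\sqrt c/b$ with $b,c>0$, proving \eqref{eqn:4.15}. With this choice \eqref{eqn:2.16} yields $c_{23}^2=c_{23}^3=0$, and then $\mu=\tau=0$ in \eqref{eqn:3.17}, so \eqref{eqn:3.16} collapses to $\sigma=\tfrac i2c_{23}^1\omega=-ib\sqrt c\,\omega$, giving $\theta_{12}=-\sigma=ib\sqrt c\,\omega$ as in \eqref{eqn:4.16}.

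Next I compute $\rho_0$ and $\rho_1$. Equation \eqref{eqn:3.33} reads $d\rho_0=2\omega_2\wedge\omega_3=\tfrac{1}{b\sqrt c}d\omega_1$, so the isomorphism $d:\mathfrak{su}(2)^*\to\mathfrak{su}(2)^*\wedge\mathfrak{su}(2)^*$ gives $\rho_0=\tfrac{1}{b\sqrt c}\omega_1$. From the minimality relation \eqref{eqn:4.2} and $c_{23}^2=c_{23}^3=0$ I get $\rho_1-\rho_0=c_{23}^1\omega_1=-2b\sqrt c\,\omega_1$, hence $\rho_1=\tfrac{1-2b^2c}{b\sqrt c}\omega_1$, matching \eqref{eqn:4.16}. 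To handle $\theta_{1\alpha}$ I split into the two cases of Proposition \ref{prop:4.1}: if $\nabla^\perp\xi_0=0$ then Proposition \ref{prop:3.2}(i) gives all $\theta_{1\alpha}=0$, so $\lambda_3=0$; if $\nabla^\perp\xi_0\ne0$ then by re-choosing the $e_\alpha$'s as in the proof of Proposition \ref{prop:4.1}(b) I may assume $\theta_{13}=\lambda_3\omega$ with $\lambda_3>0$ and $\theta_{14}=\cdots=\theta_{1n}=0$.

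In either case, the value of $\lambda_3$ is fixed by the first equation in \eqref{eqn:3.36}. Substituting $\sigma=-ib\sqrt c\,\omega$ and $\theta_{13}=\lambda_3\omega$ gives
\begin{equation*}
id\rho_1=-\sigma\wedge\bar\sigma-\theta_{13}\wedge\bar\theta_{13}=-(b^2c+\lambda_3^2)\omega\wedge\bar\omega=2i(b^2c+\lambda_3^2)\omega_2\wedge\omega_3.
\end{equation*}
On the other hand, differentiating the explicit formula for $\rho_1$ gives $d\rho_1=\tfrac{1-2b^2c}{b\sqrt c}d\omega_1=2(1-2b^2c)\omega_2\wedge\omega_3$. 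Comparing yields $\lambda_3^2=1-3b^2c$, so $\lambda_3=\sqrt{1-3b^2c}$, and the final statement $\nabla^\perp\xi_0=0\Leftrightarrow\lambda_3=0$ is immediate from Proposition \ref{prop:3.2}(i) together with the case distinction. The only mildly delicate point is matching sign conventions and keeping track of which $\omega$ vs $\bar\omega$ pieces arise when expanding \eqref{eqn:3.36}; everything else is a direct substitution into formulae already established.
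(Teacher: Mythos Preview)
Your proof is correct and follows essentially the same route as the paper's: both reduce to the Berger case via Proposition~\ref{prop:4.1}, normalize the sign of $c_{23}^1$, read off $\sigma$, $\rho_0$, and $\rho_1$ from \eqref{eqn:3.16}, \eqref{eqn:3.33}, and \eqref{eqn:4.2}, and then determine $\lambda_3$ by comparing two expressions for $d\rho_1$ through the first equation of \eqref{eqn:3.36}. The only cosmetic difference is that the paper cites the \emph{proof} of Proposition~\ref{prop:4.1} directly for $\mu=\tau=0$ (and hence for the structure constants), whereas you invoke Proposition~\ref{prop:3.1}(ii) after concluding the metric is Berger; since the paper states that proposition as an if-and-only-if, your citation is legitimate, though the more robust justification is that the minimality computations in the proof of Proposition~\ref{prop:4.1} already yield $\mu=\tau=0$ for the CR-adapted frame.
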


\begin{proof} From the proof of Proposition \ref{prop:4.1},
regardless of $\nabla^\bot\xi_0=0$ or $\nabla^\bot\xi_0\ne0$,
we can always choose a unitary frame $\{e_0,e_A\}$ such that
$\varphi=[e_0]$ and \eqref{eqn:4.14} holds.

Proposition \ref{prop:4.1} shows that $c_{12}^1=c_{13}^1=c_{21}^2=0$
and $c_{31}^2=c_{12}^3$, which implies that $\{\omega_i\}$ is a
normalized frame. Next, following the proof of Corollary
\ref{cor:4.1}, we may assume $c_{23}^1<0$. Then, by Proposition
\ref{prop:2.1}, we see that $c:=c_{23}^1c_{31}^2/4$ and
$b:=-c_{23}^1/(2\sqrt{c})$ are positive real numbers, and so that we
have \eqref{eqn:4.15}.

Now, \eqref{eqn:3.16} and the first equation of \eqref{eqn:3.19}
become
\begin{equation}\label{eqn:4.17}
\omega_{23}=(c_{23}^1+a)\omega_1,\ \ \theta_{12}=-\sigma=ib\sqrt{c}\omega,\ \
d\omega_1=ib\sqrt{c}\omega\wedge\bar{\omega}.
\end{equation}

It follows that, by \eqref{eqn:3.33} and that $d$ is an isomorphism,
$\rho_0=\tfrac1{b\sqrt{c}}\omega_1$. Hence, by \eqref{eqn:4.2},
we have
$\rho_1=\rho_0+c_{23}^1\omega_1=\tfrac{1-2b^2c}{b\sqrt{c}}\omega_1$.

From \eqref{eqn:4.14}, \eqref{eqn:4.17} and the above, we can use
the first equation of \eqref{eqn:3.36} to conclude that
$\lambda_3^2=1-3b^2c$. Finally, the assertion that $\nabla^\perp\xi_0=0$
is equivalent to $\lambda_3=0$ follows from \eqref{eqn:4.16} and
\eqref{eqn:3.32}.
\end{proof}

If $n=2$, then by definition $\nabla^\bot\xi_0=0$ is trivially
satisfied. Moreover, case (ii) in Corollary \ref{cor:4.1} can be
looked as a special case of Lemma \ref{lem:4.2} with $c=4/3$,
$b=\tfrac12$ and $\lambda_3=0$. In summary, we have the following
result.

\begin{theorem}\label{thm:4.1}
Let $\varphi:S^3\rightarrow\mathbb CP^n$ be an equivariant CR minimal immersion
with full dimension $n$. Then $\varphi$ has to be in one of the
following three cases:

{\rm (1)} $n=2$ and $\varphi(S^3)$ is not a Berger sphere case;

{\rm (2)} $n\ge2$ and $\varphi(S^3)$ is a Berger sphere with
$\nabla^\bot\xi_0=0$;

{\rm (3)} $n\ge3$ and $\varphi(S^3)$ is a Berger sphere with
$\nabla^\bot\xi_0\ne0$.
\end{theorem}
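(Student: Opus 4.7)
The plan is to assemble Theorem \ref{thm:4.1} essentially as a summary of the preceding results in this section. The main work has already been carried out in Lemma \ref{lem:4.1}, Corollary \ref{cor:4.1}, Proposition \ref{prop:4.1}, and Lemma \ref{lem:4.2}; what remains is to organize the dichotomies cleanly and check one dimension-count observation in the $n=2$ situation.

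First I would dispose of the case $n=2$. Here $\mathbb{C}P^2$ is a $4$-dimensional real manifold and $S^3$ has dimension $3$, so $T^{\perp}S^3$ has real rank $1$. Since $\xi_0 = J(\varphi_{\ast}X_1)$ is already a unit normal, it spans all of $T^{\perp}S^3$, so $W_2 = \{0\}$ and $\nabla^{\perp}\xi_0 = 0$ holds trivially (this matches the ranges $\alpha,\beta,\ldots=3,\ldots,n$ becoming empty when $n=2$). Corollary \ref{cor:4.1} then partitions this regime into exactly the non-Berger alternative, which is case (1), and the Berger alternative, which is the $n=2$ instance of case (2).

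Next I would treat $n\ge3$. By Proposition \ref{prop:4.1} the induced metric is automatically a Berger metric, so the non-Berger alternative is already excluded. Lemma \ref{lem:4.2} then produces positive reals $b,c$ and a scalar $\lambda_3 = \sqrt{1-3b^2c}\ge0$ whose vanishing is equivalent to $\nabla^{\perp}\xi_0 = 0$. Splitting on $\lambda_3 = 0$ versus $\lambda_3 > 0$ assigns the immersion to case (2) or case (3), respectively.

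Combining the two regimes shows that the three cases cover all possibilities, and they are mutually exclusive by construction. So the proof itself amounts to a short bookkeeping argument, while the real obstacles lie in the results already proved: in particular, the hardest step is Proposition \ref{prop:4.1}, whose proof required separately analyzing $\nabla^{\perp}\xi_0=0$ and $\nabla^{\perp}\xi_0\ne0$ and in each case using the Codazzi and Gauss equations to force $\mu = c_{12}^1 + i c_{13}^1 = 0$ and $\tau = c_{21}^2 + \tfrac{i}{2}(c_{31}^2 - c_{12}^3) = 0$, so that Proposition \ref{prop:3.1}(ii) applies.
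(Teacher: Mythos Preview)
Your proposal is correct and matches the paper's own treatment: the paper presents Theorem \ref{thm:4.1} explicitly as a summary (``In summary, we have the following result''), noting just before it that for $n=2$ the condition $\nabla^\bot\xi_0=0$ holds trivially and that Corollary \ref{cor:4.1}(ii) is a special case of Lemma \ref{lem:4.2}. Your bookkeeping---splitting on $n=2$ versus $n\ge3$, invoking the dimension count for the former and Proposition \ref{prop:4.1} plus Lemma \ref{lem:4.2} for the latter---is exactly the intended argument.
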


\begin{remark}\label{rm:4.1}
In Theorem 4 of \cite{K-R}, Kim and Ryan proved that the only
compact pseudo-Einstein hypersurfaces in $\mathbb{C}P^2$ are the
geodesic spheres. We would point out that these geodesic spheres are
Berger spheres of CR type (see, e.g., pp. 95-96 of \cite{DSA}), and
some of them are minimal depending on their radii.
\end{remark}

\section{Existence of equivariant CR minimal immersions}\label{sect:5}

In this section we show that all the three cases in Theorem
\ref{thm:4.1} do exist. Indeed, for each case the equivariant CR
minimal immersions can be expressed explicitly in the form of
polynomials. As the round sphere is a special Berger sphere, our
results extend that of \cite{Li} we have mentioned in the
introduction.

Let $\{X'_i\}$ be the basis of $\mathfrak{su}(2)$ defined by
\eqref{eqn:2.5}, and set $Z'=X'_2+iX'_3$. Then
\begin{equation}\label{eqn:5.1}
X'_1=i\left(z\tfrac\partial{\partial z}+w\tfrac\partial{\partial w}-\bar
z\tfrac\partial{\partial\bar z}-\bar w\tfrac\partial{\partial\bar w}\right),\ \
Z'=-\bar w\tfrac\partial{\partial z}+\bar z\tfrac\partial{\partial w}.
\end{equation}

Let $\{\varepsilon_0,\varepsilon_1,\ldots,\varepsilon_n\}$ be the natural basis of
$\mathbb C^{n+1}$ and $C_\alpha^n=n!/[\alpha!(n-\alpha)!]$ be the binomial
coefficients. Define a $\mathbb C^{n+1}$-valued function
$$
f=f(z,w):=\sum_{\alpha=0}^n\sqrt{C_\alpha^n}\,z^{n-\alpha}w^\alpha,\ \
(z,w)\in S^3,
$$
and then set
$$
f_0:=f,\ \ f_{-1}=f_{n+1}:=0,\ \
f_{\alpha}:=\tfrac1{\alpha!\sqrt{C_{\alpha}^n}}Z'^{\alpha}f,\ \ \alpha=1,\ldots,n.
$$

Applying Lemma 3.1 of \cite{Li} and the proof of Lemma 3.3 in
\cite{Li}, we have the following lemma, where $\{\omega'_i\}$ is the
dual frame of $\{X'_i\}$ and $\omega'=\omega'_2+i\omega'_3$.

\begin{lemma}\label{lem:5.1}
For $n\ge2$, $\{f_0,f_1,\ldots,f_n\}$ is a unitary frame of
$\underline{\mathbb C}^{n+1}$. Moreover,
\begin{equation}\label{eqn:5.2}
df_{\alpha}=-\sqrt{\alpha(n+1-\alpha)}\bar\omega'f_{\alpha-1}+i(n-2\alpha)\omega'_1f_{\alpha}+\sqrt{(\alpha+1)(n-\alpha)}\omega'f_{\alpha+1},
\end{equation}
for $\alpha=0,1,\ldots,n$.
\end{lemma}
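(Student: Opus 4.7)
My plan is to verify the differentiation formula \eqref{eqn:5.2} first by computing the actions of $X'_1$, $Z'$, and $\bar Z'$ on each $f_\alpha$, and then to deduce that $\{f_0,\ldots,f_n\}$ is a unitary frame from the fact that the resulting connection matrix is anti-Hermitian together with the value of the frame at a single point.

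For the $X'_1$-action, since each component of $f$ is a polynomial of bidegree $(n,0)$ in $(z,w)$, direct application of $X'_1 = i(z\partial_z + w\partial_w - \bar z\partial_{\bar z} - \bar w\partial_{\bar w})$ gives $X'_1 f = i n f$. A coordinate computation with $Z' = -\bar w \partial_z + \bar z \partial_w$ yields $[X'_1, Z'] = -2iZ'$, hence $[X'_1, Z'^\alpha] = -2i\alpha Z'^\alpha$ and therefore $X'_1 f_\alpha = i(n-2\alpha) f_\alpha$, matching the $\omega'_1$ coefficient. The $\omega'$ coefficient then follows immediately from the definition and the identity $C_{\alpha+1}^n/C_\alpha^n = (n-\alpha)/(\alpha+1)$, since
\[
Z' f_\alpha = \tfrac{1}{\alpha!\sqrt{C_\alpha^n}}\, Z'^{\alpha+1} f = \sqrt{(\alpha+1)(n-\alpha)}\, f_{\alpha+1}.
\]
The $\bar\omega'$ coefficient I would handle by induction on $\alpha$: the base case $\bar Z' f_0 = 0$ holds because $f$ is holomorphic, and the inductive step uses the commutator $[\bar Z', Z'] = iX'_1$, verified in coordinates in the same way as $[X'_1, Z']$. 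Writing $\bar Z' f_{\alpha+1} = \bar Z'(Z' f_\alpha)/\sqrt{(\alpha+1)(n-\alpha)}$ and substituting $\bar Z' Z' f_\alpha = Z' \bar Z' f_\alpha + iX'_1 f_\alpha$ together with the inductive hypothesis reduces everything to a multiple of $f_\alpha$; a short algebraic simplification using $-\alpha(n+1-\alpha)-(n-2\alpha) = -(\alpha+1)(n-\alpha)$ produces the claimed coefficient $-\sqrt{(\alpha+1)(n-\alpha)}$, closing the induction.

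For the unitary-frame assertion, I would rewrite \eqref{eqn:5.2} as $df_\alpha = \sum_\gamma M^\gamma_\alpha f_\gamma$ with an $(n+1)\times(n+1)$ matrix $M$ of 1-forms; inspection of the coefficients shows $M^\beta_\alpha + \overline{M^\alpha_\beta} = 0$, so $M$ is $\mathfrak{u}(n+1)$-valued. Consequently, if $\{f_\alpha\}$ is orthonormal at one point it is so everywhere. At $(z,w) = (1,0)$ the only term of $f$ whose $\alpha$-fold image under $Z' = -\bar w \partial_z + \bar z \partial_w$ survives is $\sqrt{C_\alpha^n}\, z^{n-\alpha} w^\alpha$, which produces $Z'^\alpha f|_{(1,0)} = \alpha!\sqrt{C_\alpha^n}\,\varepsilon_\alpha$ and hence $f_\alpha(1,0) = \varepsilon_\alpha$, giving the required initial orthonormality. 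The main obstacle I anticipate is the careful bookkeeping of signs and normalization constants throughout, in particular in the $\bar Z'$ induction, where the coefficient $-\sqrt{\alpha(n+1-\alpha)}$ only emerges after a delicate cancellation of the eigenvalue $i(n-2\alpha)$ against the product of successive raising and lowering constants; a single sign or off-by-one error would propagate through the whole chain.
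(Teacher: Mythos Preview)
Your proposal is correct. The paper does not actually give a proof of this lemma; it simply cites Lemma~3.1 and the proof of Lemma~3.3 in \cite{Li}. Your argument is the natural self-contained one: the commutator relations $[X'_1,Z']=-2iZ'$ and $[\bar Z',Z']=iX'_1$ are exactly the $\mathfrak{su}(2)$ ladder identities, and your induction on $\alpha$ for the $\bar Z'$-action together with the evaluation $f_\alpha(1,0)=\varepsilon_\alpha$ and the anti-Hermitian connection matrix reproduce both \eqref{eqn:5.2} and the unitarity of the frame. This is presumably the same computation carried out in \cite{Li}, so there is no genuine methodological difference to report.
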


\subsection{Existence of equivariant CR minimal non-Berger
sphere}\label{sect:5.1}~

Define
$$
\begin{aligned}
&f_0=(z^2,\sqrt{2}zw,w^2),\ \ f_1=(-\sqrt 2z\bar w,z\bar z-w\bar
w,\sqrt 2w\bar z),\ \ f_2=(\bar w^2,-\sqrt 2\bar z\bar w,\bar z^2).
\end{aligned}
$$

By Lemma \ref{lem:5.1}, $\{f_0,f_1,f_2\}$ is a unitary frame of
$\underline{\mathbb C}^{3}$. Moreover, it holds that
\begin{equation}\label{eqn:5.3}
df_0=2i\omega'_1f_0+\sqrt2\omega'f_1,\
df_1=-\sqrt2\bar\omega'f_0+\sqrt2\omega'f_2, \
df_2=-\sqrt2\bar\omega'f_1-2i\omega'_1f_2.
\end{equation}

Now, we define ${e_0}:{S^3}\rightarrow{\mathbb C^3}$ by
\begin{equation}\label{eqn:5.4}
e_0=\cos\tfrac\pi8f_0+i\sin\tfrac\pi8f_2
=\cos\tfrac\pi8(z^2,\sqrt{2}zw,w^2)+i\sin\tfrac\pi8(\bar w^2,-\sqrt 2\bar z\bar w,\bar z^2).
\end{equation}

\begin{proposition}\label{prop:5.1}
The $\mathbb C^3$-valued function $e_0$, given by \eqref{eqn:5.4},
defines an equivariant CR minimal immersion
${\varphi=[e_0]}:{S^3}\rightarrow{\mathbb C P^2}$, which is not a Berger sphere.
\end{proposition}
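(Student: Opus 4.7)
The plan is to exhibit, directly from formula \eqref{eqn:5.4}, a unitary frame $\{e_0,e_1,e_2\}$ of $\underline{\mathbb C}^3$ and an orthonormal coframe $\{\omega_1,\omega_2,\omega_3\}$ of $T^*S^3$ realising the structure equations \eqref{eqn:4.6} together with the coefficient identifications \eqref{eqn:4.11} of Corollary \ref{cor:4.1}\,(i). Once this is done, the immersion property, CR type, minimality and non-Berger character all follow: since $n=2$ there are no $e_\alpha$ with $\alpha\geq3$ in \eqref{eqn:3.7}; the CR structure is encoded in the form $\omega=\omega_2+i\omega_3$ with real $\omega_i$; the minimality condition \eqref{eqn:4.2} reads off from $\rho_1-\rho_0=-2\omega_1$; and the non-Berger property follows from Proposition \ref{prop:3.1}\,(ii), since the $\bar\omega$-term in $\sigma$ yields $\tau=\sqrt 2\neq 0$.

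First I would dispose of the preliminaries. A short calculation using $|z|^2+|w|^2=1$ gives $|f_0|^2=|f_2|^2=1$ and $\langle f_0,\bar f_2\rangle=0$, hence $|e_0|^2=\cos^2(\pi/8)+\sin^2(\pi/8)=1$, so $\varphi=[e_0]:S^3\to\mathbb CP^2$ is well defined. For equivariance, observe that $f_0(z,w)=\mathrm{Sym}^2(z,w)$ while $(\bar w^2,-\sqrt 2\,\bar z\bar w,\bar z^2)=\mathrm{Sym}^2(-\bar w,\bar z)$, where $(z,w)$ and $(-\bar w,\bar z)$ are the two columns of the $SU(2)$-matrix in \eqref{eqn:2.1}. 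Left multiplication by $g\in SU(2)$ sends each column to its $g$-image, so both summands transform by $\mathrm{Sym}^2(g)$, whence $e_0(gh)=\mathrm{Sym}^2(g)\,e_0(h)$. This gives the equivariance of $\varphi$ via the Lie-group homomorphism $E=\mathrm{Sym}^2:SU(2)\to SU(3)$.

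Next I would compute $de_0$ from \eqref{eqn:5.3}, writing it as a multiple of $e_0$ plus an $e_0$-orthogonal remainder. A routine calculation gives
\[
de_0=i\sqrt 2\,\omega'_1\,e_0+2\omega'_1 u+\sqrt 2\bigl(\cos(\pi/8)\omega'-i\sin(\pi/8)\bar\omega'\bigr)f_1,
\]
where $u:=i\cos(\pi/8)(1-1/\sqrt 2)f_0+\sin(\pi/8)(1+1/\sqrt 2)f_2$ is orthogonal to $e_0$ and $f_1$ and satisfies $|u|^2=1/2$. Setting $\omega_1=\sqrt 2\,\omega'_1$, $e_1=\sqrt 2\,u$, $e_2=f_1$ and $\omega=\sqrt 2\bigl(\cos(\pi/8)\omega'-i\sin(\pi/8)\bar\omega'\bigr)$ puts the first line of \eqref{eqn:3.7} into exactly the required form with $\rho_0=\omega_1$. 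Decomposing $\omega=\omega_2+i\omega_3$ defines two real 1-forms, and the identities $2\cos(\pi/8)\sin(\pi/8)=\cos^2(\pi/8)-\sin^2(\pi/8)=1/\sqrt 2$ imply that $\{\omega_1,\omega_2,\omega_3\}$ is orthonormal for the pullback metric; in particular $\varphi$ is an immersion.

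It remains to differentiate $e_1$ and $e_2$ using \eqref{eqn:5.3} and check that both $de_1$ and $de_2$ stay in $\mathrm{span}_{\mathbb C}\{e_0,e_1,e_2\}$, yielding the second and third lines of \eqref{eqn:4.6} with coefficients $\rho_1=-\omega_1$, $\rho_2=0$ and $\sigma=-i\omega-\sqrt 2\,\bar\omega$ exactly as in \eqref{eqn:4.11}. From this matching all four remaining properties follow as described in the overview. The main obstacle is the linear-algebraic bookkeeping in the extraction of $u$ from the mixed $f_0,f_2$ contribution and the subsequent matching of signs in the second and third lines of \eqref{eqn:4.6}; once the setup is in place, everything reduces to mechanical use of the structure equations \eqref{eqn:5.3} and the half-angle identities for $\pi/8$.
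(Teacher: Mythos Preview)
Your approach is essentially the paper's: build the unitary frame $\{e_0,e_1,e_2\}$, compute $de_0,de_1,de_2$ from \eqref{eqn:5.3}, and read off the data of Lemma~\ref{lem:3.1}. The paper simply writes down $e_1=i\sin t\,f_0+\cos t\,f_2$ and $e_2=e^{-it}f_1$ directly (your $e_1=\sqrt2\,u$ is in fact the same vector, by the half-angle identities you mention), whereas you extract $e_1$ from the $e_0$-orthogonal part of $de_0$ and take $e_2=f_1$. Your equivariance argument via $\mathrm{Sym}^2$ is a pleasant, more conceptual alternative to the paper's appeal to Proposition~4.2 of \cite{Li}.

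One small slip: with your choice $e_2=f_1$ (instead of the paper's $e^{-i\pi/8}f_1$) the computation actually yields $\sigma=-i\omega+\sqrt2\,\bar\omega$, i.e.\ $\tau=-\sqrt2$, not the $\sigma=-i\omega-\sqrt2\,\bar\omega$ of \eqref{eqn:4.11}. This is harmless for the proof---minimality only needs $(\rho_1-\rho_0)(X_1)=c_{23}^1=-2$, which holds, and the non-Berger conclusion only needs $\tau\neq0$---but you will not match \eqref{eqn:4.11} \emph{exactly} unless you insert the phase $e^{-i\pi/8}$ into $e_2$ (equivalently, rotate $(\omega_2,\omega_3)$ accordingly).
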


\begin{proof} Let $t=\pi/8$ and that
\begin{equation}\label{eqn:5.5}
e_1=i\sin t\kern1pt f_0+\cos t\kern1pt f_2,\ \ e_2=e^{-it}f_1.
\end{equation}
Then, by virtue of Lemma \ref{lem:5.1}, $\{e_0,e_1,e_2\}$ is a
unitary frame of $\underline{\mathbb C}^{3}$. Solving \eqref{eqn:5.4} and
\eqref{eqn:5.5}, we get
$$
f_0=\cos t\kern2pt e_0-i\sin t\kern2pt e_1,\ \ f_1=e^{it}e_2, \ \
f_2=-i\sin t\kern2pt e_0+\cos t\kern2pt e_1.
$$

Using \eqref{eqn:5.3}\,--\,\eqref{eqn:5.5}, and noting that
$\sqrt2e^{2it}=1+i$, we have
\begin{equation}\label{eqn:5.6}
\left\{
\begin{array}{@{\extracolsep{1pt}}l}
de_0=i\kern2pt\omega_1e_0+ \omega_1e_1+\omega e_2,\\[2pt]
de_1=-\omega_1e_0-i\kern1pt\omega_1e_1-(\omega_2+2\omega_3-i\omega_3)e_2,\\[2pt]
de_2=-\kern1pt\bar\omega\, e_0+(\omega_2+2\omega_3+i\omega_3)e_1,
\end{array}\right.
\end{equation}
where $\omega_1=\sqrt2\omega'_1,\ \omega_2=\sqrt2\omega'_2-\omega'_3,\
\omega_3=\omega'_3,\ \omega=\omega_2+i\omega_3$.

Then, by Proposition 4.2 of \cite{Li} we see that $\varphi$ is an
equivariant immersion, with induced metric $ds^2=\sum\omega_i^2$ and an
orthonormal frame $\{\omega_i\}$.

From \eqref{eqn:5.6} we know that $-\varphi^*\Omega=\tfrac
i2\omega\wedge\bar\omega=\omega_2\wedge\omega_3$. Thus, $\varphi$ is of CR type.

Comparing \eqref{eqn:3.7} with \eqref{eqn:5.6}, and noting that
$\omega_{12}+i\omega_{13}=\sigma=-\theta_{12}$, we have
$$
\rho_0=\omega_1=-\rho_1,\ \ \omega_{12}=\omega_2+2\omega_3,\ \ \omega_{13}=-\omega_3.
$$

From the above equations and \eqref{eqn:4.1} we see that $\varphi$
is minimal.

Finally, as $\omega\wedge\sigma=\sqrt2e^{2it}\omega\wedge\bar\omega\ne0$, by
\eqref{eqn:3.16}, \eqref{eqn:3.17} and Proposition \ref{prop:3.1},
we see that $(S^3,ds^2)$ is not a Berger sphere.

Moreover, from \eqref{eqn:5.6} and Proposition \ref{prop:3.2} (ii),
it is easily seen that $\varphi(S^3)$ is totally geodesic with
respect to $W_2$.
\end{proof}

\subsection{Existence of equivariant CR minimal Berger
sphere}\label{sect:5.2}~

In this subsection, we agree with the following range of indices
$$
\alpha,\beta,\ldots=0,1,\ldots,k;\quad \alpha',\beta',\ldots=0,1,\ldots,\ell,
$$
whereas the indices $A,B,i,j$ having the convention as before.

Let $k,\ell$ be two integers with $k>\ell\geq0$. Suppose
$\{\varepsilon_0,\ldots,\varepsilon_k,\varepsilon'_0,\ldots,\varepsilon'_\ell\}$ is the natural
basis of $\mathbb C^{n+1}=\mathbb C^{k+1}\oplus\mathbb C^{\ell+1}$ with
$n=k+\ell+1$ and $\varepsilon'_{\alpha'}=\varepsilon_{k+1+\alpha'}$.

Define two functions $f:{S^3}\rightarrow{\mathbb C^{n+1}}$ and $
h:{S^3}\rightarrow{\mathbb C^{n+1}}$ by
$$
f(z,w)=\sum\sqrt{C_{\alpha}^k}z^{k-\alpha}w^\alpha\varepsilon_\alpha,\ \
h(z,w)=\sum\sqrt{C_{\alpha'}^\ell}z^{\ell-\alpha'}w^{\alpha'}\varepsilon'_{\alpha'}.
$$

According to Lemma \ref{lem:5.1},
$\{f_0,f_1,\ldots,f_k,h_0,\ldots,h_\ell\}$ is a unitary frame of
$\underline{\mathbb C}^{n+1}$.

For $t\in(0,\pi/2)$, we define a $\mathbb C^{n+1}$-valued function $e_0$
by
\begin{equation}\label{eqn:5.7}
e_0:=e_0(t)=\cos t\kern3pt f_0+i\sin t\kern3pt h_0.
\end{equation}

\begin{proposition}\label{prop:5.2}
For each $t\in(0,\pi/2)$, the function $e_0$, given by
\eqref{eqn:5.7}, defines an equivariant CR immersion
$\varphi=[e_0]:S^3\rightarrow\mathbb CP^n$ with the following properties:

{\rm (i)}\ $(S^3,ds^2)$ is a Berger sphere;

{\rm (ii)}\ $\nabla^{\bot}\xi_0=0$ if and only if $\ell=0$;

{\rm (iii)}\ $\varphi$ is minimal if and only if $t$ satisfies that
\begin{equation}\label{eqn:5.8}
\tan^2 t=2k/\big[3(k-\ell)+\sqrt{(k+\ell)^2+8(k-\ell)^2}\big].
\end{equation}
\end{proposition}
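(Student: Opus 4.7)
The plan is to exhibit the structure equations \eqref{eqn:3.7} for $e_0$ explicitly, then read off the geometric data to verify (i)--(iii). By Lemma~\ref{lem:5.1} at $\alpha=0$,
\[
de_0 = ik\cos t\,\omega_1' f_0 - \ell\sin t\,\omega_1' h_0 + \sqrt{k}\cos t\,\omega' f_1 + i\sqrt{\ell}\sin t\,\omega' h_1.
\]
Hermitian projection onto $e_0=\cos t\,f_0+i\sin t\,h_0$ yields $\rho_0=(k\cos^2 t+\ell\sin^2 t)\omega_1'$. Setting $A:=\sqrt{k\cos^2 t+\ell\sin^2 t}$, $e_1:=i\sin t\,f_0+\cos t\,h_0$, and $e_2:=A^{-1}(\sqrt{k}\cos t\,f_1+i\sqrt{\ell}\sin t\,h_1)$, direct calculation gives $de_0=i\rho_0 e_0+\omega_1 e_1+\omega e_2$ with $\omega_1=(k-\ell)\sin t\cos t\,\omega_1'$ and $\omega=A\omega'$. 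By Proposition 4.2 of \cite{Li}, $\varphi$ is equivariant; via \eqref{eqn:3.4} with $\theta_1=\omega_1$, $\theta_2=\omega$, one computes $J_{23}=1$, so $\varphi$ is of CR type. The induced metric $ds^2=\omega_1^2+\omega_2^2+\omega_3^2$ takes the form $(b^2/c)(\omega_1')^2+(1/c)[(\omega_2')^2+(\omega_3')^2]$ with $b=(k-\ell)\sin t\cos t/A>0$ and $c=1/A^2>0$, so (i) follows from Definition~\ref{def:2.1}.

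For (ii), I compute $de_1=i\sin t\,df_0+\cos t\,dh_0$ and extract the normal components. Its $\omega_1'$-part lies in ${\rm span}\{f_0,h_0\}={\rm span}\{e_0,e_1\}$ and contributes nothing normal; its $\omega'$-part sits in ${\rm span}\{f_1,h_1\}={\rm span}\{e_2,e_3\}$, where $e_3:=A^{-1}(i\sqrt{\ell}\sin t\,f_1+\sqrt{k}\cos t\,h_1)$. Hermitian projection onto $e_3$ gives $\theta_{13}=(\sqrt{k\ell}/A^2)\omega$, while $\theta_{1\alpha}=0$ for $\alpha\ge 4$ since $de_1$ lies entirely in ${\rm span}\{f_0,h_0,f_1,h_1\}$. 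By Proposition~\ref{prop:3.2}(i), $\nabla^\bot\xi_0=0$ iff $\theta_{1\alpha}\equiv 0$ iff $\ell=0$.

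For (iii), I apply the minimality criterion \eqref{eqn:4.1}. The second equation is automatic: $\theta_{1\alpha}(X_1)=0$ since $\theta_{1\alpha}$ has only an $\omega$-term and $\omega(X_1)=0$; and by Lemma~\ref{lem:5.1} the $\bar\omega'$-part of $de_2$ involves only $f_0,h_0\in{\rm span}\{e_0,e_1\}$, forcing $\theta_{2\alpha}$ to have no $\bar\omega$-term, so $\theta_{2\alpha}(\bar Z)=0$. For the first equation, the $e_1$-coefficient of $de_1$ gives $\rho_1=(k\sin^2 t+\ell\cos^2 t)\omega_1'$, so $(\rho_1-\rho_0)(X_1)=-2\cot(2t)$; meanwhile $c_{23}^1=-2b\sqrt{c}=-(k-\ell)\sin(2t)/A^2$ by \eqref{eqn:2.18}. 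Equating and clearing denominators yields the quadratic $4(k-\ell)x^2+(3\ell-5k)x+k=0$ in $x=\sin^2 t$, whose discriminant equals $(k+\ell)^2+8(k-\ell)^2$; setting $D:=\sqrt{(k+\ell)^2+8(k-\ell)^2}$, the unique root in $(0,1)$ is $x=[(5k-3\ell)-D]/[8(k-\ell)]$, and the identity $(5k-3\ell-D)(3k-3\ell+D)=2k(3k-5\ell+D)$ (verified using $D^2=9k^2-14k\ell+9\ell^2$) converts $\tan^2 t=x/(1-x)$ into \eqref{eqn:5.8}.

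The main technical obstacle is the projection bookkeeping in the $de_1$ and $de_2$ computations together with the final algebraic manipulation; the saving grace is the tridiagonal structure of Lemma~\ref{lem:5.1}, which confines all relevant projections to the block ${\rm span}\{f_0,f_1,h_0,h_1\}$, so only the single extra frame vector $e_3$ need be introduced to complete the analysis of $\theta_{1\alpha}$.
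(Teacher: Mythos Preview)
Your proof is correct and follows essentially the same route as the paper's: build the frame $\{e_0,e_1,e_2,e_3,\ldots\}$ out of the blocks $\{f_\alpha\}$ and $\{h_{\alpha'}\}$, read off $\rho_0,\rho_1,\omega_1,\omega,\theta_{12},\theta_{1\alpha}$ from Lemma~\ref{lem:5.1}, and then apply Definition~\ref{def:2.1}, Proposition~\ref{prop:3.2}(i), and the minimality criterion \eqref{eqn:4.1}. The only cosmetic differences are that the paper packages your normalizer $A$ as an auxiliary angle $t_1$ via $\cos t_1=\sqrt{k}\cos t/A$, $\sin t_1=\sqrt{\ell}\sin t/A$, and that the paper solves the minimality equation for $\cos 2t$ (their \eqref{eqn:5.19}) whereas you solve a quadratic in $x=\sin^2 t$; your identity $(5k-3\ell-D)(3k-3\ell+D)=2k(3k-5\ell+D)$ is exactly the bridge between the two formulations. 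One small point of presentation: your definition of $e_3$ tacitly assumes $\ell\ge 1$ (so that $h_1$ exists); when $\ell=0$ the span $\{f_1,h_1\}$ collapses to $\{f_1\}=\{e_2\}$ and $\theta_{13}=\sqrt{k\ell}/A^2\,\omega$ vanishes anyway, so the conclusion stands, but it would be cleaner to note this case separately.
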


\begin{proof} There is a unique $t_1\in[0,\pi/2)$ such that
\begin{equation}\label{eqn:5.9}
\cos t_1=\tfrac{\sqrt k\cos t}{\sqrt{k\cos^2t+\ell\sin^2t}},\ \ \sin
t_1=\tfrac{\sqrt\ell\sin t}{\sqrt{k\cos^2t+\ell\sin^2t}}.
\end{equation}

Now we put
\begin{equation}\label{eqn:5.10}
\left\{
\begin{aligned}
&e_1=i\sin t\kern1pt f_0+\cos t\kern1pt h_0,\ \ e_2=\cos
t_1f_1+i\sin
t_1h_1,\\
&e_3=i\sin t_1f_1+\cos t_1h_1,\ \ e_4=f_2,\ \ e_5=h_2,\\
&e_6=f_3,\ \ldots,\ e_{k+3}=f_k,\ \ e_{k+4}=h_3,\ \ldots,\
e_n=h_{\ell}.
\end{aligned}
\right.
\end{equation}
Then $\{e_0,e_A\}$ is a unitary frame of $\underline{\mathbb C}^{n+1}$ due to that
$\{f_\alpha,h_{\alpha'}\}$ is unitary.

From similar expressions as \eqref{eqn:5.2} we can see that
$\langle{df_\alpha},{\bar f_\beta}\rangle$, $\langle{dh_{\alpha'}},{\bar h_{\beta'}}\rangle$ are
all left-invariant one-forms, and that $\langle{df_\alpha},{\bar
h_{\alpha'}}\rangle=\langle{dh_{\alpha'}},{\bar f_\alpha}\rangle=0$. It follows that
$\langle{de_0},{\bar e_0}\rangle$, $\langle{de_0},{\bar e_A}\rangle$,
$\langle{de_A},{\bar e_B}\rangle\in \mathfrak{su}(2)^*\otimes\mathbb C$. Hence
$\varphi$ is equivariant.

From \eqref{eqn:5.7} and \eqref{eqn:5.10}, with the use of
\eqref{eqn:5.2} and \eqref{eqn:5.9}, we can easily verify that
\begin{equation}\label{eqn:5.11}
de_0=\cos t\kern2pt df_0+i\sin t\kern2pt
dh_0=ia'_0\omega'_1e_0+m\sin2t\kern2pt\omega'_1e_1+\sqrt{a'_0}\kern2pt\omega'e_2,
\end{equation}
where
\begin{equation}\label{eqn:5.12}
a'_0=k\cos^2t+\ell\sin^2t>0, \ \ m=\tfrac12(k-\ell)>0.
\end{equation}

From \eqref{eqn:5.11}, we see that $\varphi$ is an immersion with the
induce metric
\begin{equation}\label{eqn:5.13}
ds^2=m^2\sin^22t\kern2pt\omega'^2_1+a'_0(\omega'^2_2+\omega'^2_3).
\end{equation}

Obviously, $ds^2$ is a Berger metric with $c=1/a'_0$ and
$b=m\sqrt{c}\,\sin2t$ in \eqref{eqn:2.17}.

Let $\{\omega_i\}$ be an orthonormal frame with respect to $ds^2$,
where $\omega_1=m\sin2t\kern2pt \omega'_1$,
$\omega=\omega_2+i\omega_3=\sqrt{a'_0}\kern2pt\omega'$. Then \eqref{eqn:5.11}
can be rewritten as
\begin{equation}\label{eqn:5.14}
de_0=i\rho_0e_0+\omega_1e_1+\omega\kern2pt e_2,\ \
\rho_0=\tfrac{k\cos^2t+\ell\sin^2t}{m\sin2t}\kern2pt\omega_1.
\end{equation}

It follows that $-\varphi^*\Omega=(i/2)\omega\wedge\bar\omega=\omega_2\wedge\omega_3$.
Thus $\varphi$ is of CR type.

Similarly, using \eqref{eqn:5.2}, \eqref{eqn:5.9} and $de_1=i\sin
t{\kern2pt}df_0+\cos t{\kern2pt}dh_0$, we can verify that
\begin{equation}\label{eqn:5.15}
de_1=-\omega_1e_0+i\rho_1e_1+\tfrac{im\sin2t}{k\cos^2t+\ell\sin^2t}\kern2pt\omega\kern2pt
e_2 +\lambda_3\omega\kern2pt e_3,
\end{equation}
where
\begin{equation}\label{eqn:5.16}
\rho_1=\tfrac{k\sin^2t+\ell\cos^2t}{m\sin2t}\kern2pt\omega_1, \ \
\lambda_3=\tfrac{\sqrt{k\ell}}{k\cos^2t+\ell\sin^2t}.
\end{equation}

Then, Proposition \ref{prop:3.2} shows that $\nabla^{\bot}\xi_0=0$
if and only if $\ell=0$.

Finally, we have the calculation
\begin{equation}\label{eqn:5.17}
\begin{aligned}
de_2=-\bar\omega\kern2pt
e_0&+\tfrac{im\sin2t}{k\sin^2t+\ell\cos^2t}\bar\omega\kern2pt
                  e_1+i(k\sin^2t_1+\ell\cos^2t_1-2)\omega'_1e_2\\
& +\lambda_3\omega_1e_3+\mu_4\omega\kern2pt e_4+i\mu_5\omega\kern2pt e_5,
\end{aligned}
\end{equation}
where $\mu_4=\sqrt{2k(k-1)}/a'_0$,
$\mu_5=\sqrt{2\ell(\ell-1)}/a'_0$\ \ (Note: $\mu_5=0$ if $\ell=0$).

From \eqref{eqn:5.15} and \eqref{eqn:5.17} we see that the second
equation in \eqref{eqn:4.1} is satisfied. By using \eqref{eqn:5.14},
\eqref{eqn:5.15} and \eqref{eqn:5.16}, we get
$$
\rho_1-\rho_0=-2\cot 2t\kern2pt\omega_1,\ \
\omega_{12}+i\omega_{13}=-\theta_{12}=\tfrac{m\sin2t}{k\cos^2t+\ell\sin^2t}(\omega_3-i\omega_2).
$$
Therefore, by \eqref{eqn:4.1}, $\varphi$ is minimal if and only if
\begin{equation}\label{eqn:5.18}
\cot2t=\tfrac{m\sin
2t}{k\cos^2t+\ell\sin^2t}=\tfrac{2m\sin2t}{(n-1)+2m\cos2t}.
\end{equation}
This leads to
\begin{equation}\label{eqn:5.19}
\cos2t=[\sqrt{(n-1)^2+32m^2}-(n-1)]/(8m).
\end{equation}
Then, noting that $k-\ell=2m$ and $k+\ell=n-1$, we can easily get
\eqref{eqn:5.8}.

We have completed the proof of Proposition \ref{prop:5.2}.
\end{proof}

In the special case that $\ell=0$ and $k=n-1$, by \eqref{eqn:5.8} we
have $t=\pi/6$ and \eqref{eqn:5.7} becomes
\begin{equation}\label{eqn:5.20}
e_0=\tfrac{\sqrt3}2\sum\sqrt{C_{\alpha}^{n-1}}z^{n-1-\alpha}w^\alpha\varepsilon_\alpha+\tfrac i2\varepsilon_n.
\end{equation}

From Proposition \ref{prop:5.2} and its proof we have the following
\begin{corollary}\label{cor:5.1}
The function $e_0$, given by \eqref{eqn:5.20}, defines an
equivariant CR minimal immersion $\varphi=[e_0]:S^3\rightarrow\mathbb CP^n$,
which induces a Berger metric $ds^2$. Moreover, it satisfies
$\nabla^{\bot}\xi_0=0$.
\end{corollary}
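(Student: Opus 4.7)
The plan is to obtain Corollary \ref{cor:5.1} as the specialization of Proposition \ref{prop:5.2} to the parameter values $\ell = 0$ and $k = n - 1$, for which the minimality equation \eqref{eqn:5.8} forces $t = \pi/6$ and the auxiliary function $h$ from that proposition reduces to a constant vector.

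First, I would substitute $\ell = 0$ and $k = n - 1$ into the minimality condition \eqref{eqn:5.8}. The expression under the square root simplifies to $(n-1)^2 + 8(n-1)^2 = 9(n-1)^2$, so the square root equals $3(n-1)$, and hence
\[
\tan^2 t = \frac{2(n-1)}{3(n-1) + 3(n-1)} = \frac{1}{3}.
\]
Since $t \in (0, \pi/2)$, this yields $t = \pi/6$, so $\cos t = \sqrt{3}/2$ and $\sin t = 1/2$. Moreover, when $\ell = 0$ the natural basis of $\mathbb{C}^{\ell+1}$ reduces to the single vector $\varepsilon'_0 = \varepsilon_n$, so the function $h$ from Proposition \ref{prop:5.2} collapses to the constant $h(z,w) = \varepsilon_n$, and therefore $h_0 = \varepsilon_n$. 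Substituting these values into \eqref{eqn:5.7} recovers \eqref{eqn:5.20} verbatim.

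The remaining assertions then read off Proposition \ref{prop:5.2} directly. Part (i) guarantees that the induced metric is a Berger metric; part (iii), together with the fact that $t = \pi/6$ satisfies \eqref{eqn:5.8}, ensures that $\varphi = [e_0]$ is minimal; and part (ii), combined with $\ell = 0$, gives $\nabla^\perp \xi_0 = 0$. That $\varphi$ defines an equivariant CR immersion into $\mathbb{C}P^n$ is built into the statement of Proposition \ref{prop:5.2} itself. Because the entire content of the corollary is packaged inside Proposition \ref{prop:5.2}, there is no substantive obstacle beyond verifying that the edge value $\ell = 0$ is permitted by the convention $k > \ell \geq 0$ employed there, which it is.
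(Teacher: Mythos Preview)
Your proof is correct and follows essentially the same approach as the paper: the text preceding Corollary \ref{cor:5.1} explicitly states that in the special case $\ell=0$, $k=n-1$, equation \eqref{eqn:5.8} gives $t=\pi/6$ and \eqref{eqn:5.7} reduces to \eqref{eqn:5.20}, so the corollary follows directly from Proposition \ref{prop:5.2}. Your verification of the arithmetic for $\tan^2 t = 1/3$ and the reduction of $h_0$ to $\varepsilon_n$ simply makes explicit what the paper leaves to the reader.
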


\begin{remark}\label{rem:5.1}
{\rm (a) The particular case
$e_0=\tfrac{\sqrt3}2(z,w,0)+\tfrac{i}2(0,0,1)$, which corresponds to
$n=2$ and $t=\pi/6$, is exactly the case (ii) of Corollary
\ref{cor:4.1}.

\vskip 1mm

(b) From \eqref{eqn:5.13} one can see that the induced metric has
constant curvature $c$ if and only if
$\tfrac1c=m^2\sin^22t=k\cos^2t+\ell\sin^2t=\tfrac{n-1}2+m\cos2t$.
This, by \eqref{eqn:5.18}, is equivalent to $\cot2t=\sqrt c$. If it
does occur, then $m\cos2t=1$, $n=2m^2-3$ and $c=1/(m^2-1)$ for some
$m\ge2$. We then recover Theorem 6.1 in \cite{Li}.

\vskip 1mm

(c) The function $e_0=e_0(t)$ in \eqref{eqn:5.7} depends on the
parameter $t$. For a fixed pair $k,\ell$ with $k>\ell\ge0$ there is
a family of equivariant CR immersion
$\{\varphi_t=[e_0(t)]\kern1pt|\kern1pt t\in(0,\pi/2)\}$. Each of them
induces a Berger metric. However, among them only one is minimal by
\eqref{eqn:5.8}. Moreover, for fixed $n\ge2$ there are $[n/2]$ such
families, given by $\ell=0,\, \ldots,\,[n/2]-1$. Here, $[n/2]$
denotes the largest integer not exceeding $n/2$.}
\end{remark}

\section{Uniqueness of equivariant CR minimal immersions}\label{sect:6}

Recall that two immersions $\psi_1,\psi_2:S^3\rightarrow\mathbb CP^n$ are said
to be equivalent, denoted by $\psi_1\sim\psi_2$, if there exists a
holomorphic isometry $A:\mathbb CP^n\rightarrow\mathbb CP^n$ such that $\psi_1=A\circ\psi_2$.

For simplicity, we denote by $\varphi_1$, $\varphi_2$, $\varphi_3$ the three
equivariant CR minimal immersions determined by \eqref{eqn:5.4},
\eqref{eqn:5.20}, \eqref{eqn:5.7} with the additional conditions
\eqref{eqn:5.8} and $\ell>0$, respectively. Let $X'_1,Z'$ be the
differential operators as defined by \eqref{eqn:5.1}.

First of all, we introduce Lemma 3.3 in \cite{Li} as below.

\begin{lemma}[\cite{Li}]\label{lem:6.1}
Let $f$ be a $\mathbb C^{n+1}$-valued function defined on $S^3$. If it
satisfies $\langle f,{\bar f}\rangle=1,\bar Z'f=0,\,X'_1f=isf$, then $s$ is a
nonnegative integer with $s\le n$. Moreover, there is a unitary
basis $\{\varepsilon_0,\ldots,\varepsilon_s,\varepsilon_{s+1},\ldots,\varepsilon_n\}$ of
$\mathbb C^{n+1}$ such that
$$
f(z,w)=\sum_{\alpha=0}^s\sqrt{C_{\alpha}^s}z^{s-\alpha}w^\alpha\varepsilon_\alpha,\ \
(z,w)\in S^3.
$$
\end{lemma}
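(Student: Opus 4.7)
The plan is to show that the two PDE conditions force $f$ to be the restriction to $S^3$ of a $\mathbb{C}^{n+1}$-valued homogeneous polynomial of degree $s$ in $(z,w)$, and then to use the pointwise unit-norm condition to select a unitary basis of $\mathbb{C}^{n+1}$ adapted to $f$.

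I would first observe that $X_1'$ generates the $U(1)$-action $(z,w)\mapsto(e^{it}z,e^{it}w)$, so $X_1'f=isf$ integrates to $f(e^{it}z,e^{it}w)=e^{ist}f(z,w)$ on $S^3$; taking $t=2\pi$ forces $s\in\mathbb{Z}$. I would then extend $f$ off $S^3$ by
\[
\tilde f(z,w):=r^{s}\,f\!\left(z/r,\,w/r\right),\qquad r:=\sqrt{|z|^2+|w|^2},
\]
on $\mathbb{C}^2\setminus\{0\}$. A direct calculation using the $U(1)$-equivariance upgrades this to $\mathbb{C}^\ast$-equivariance: $\tilde f(\lambda z,\lambda w)=\lambda^s\tilde f(z,w)$ for every $\lambda\in\mathbb{C}^\ast$.

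Next I would show that $\tilde f$ is holomorphic on $\mathbb{C}^2\setminus\{0\}$. Differentiating the $\mathbb{C}^\ast$-equivariance in $\bar\lambda$ at $\lambda=1$ gives the identity $\bar z\,\partial_{\bar z}\tilde f+\bar w\,\partial_{\bar w}\tilde f=0$ throughout $\mathbb{C}^2\setminus\{0\}$. Since $\bar Z'=-w\partial_{\bar z}+z\partial_{\bar w}$ satisfies $\bar Z'(|z|^2+|w|^2)=0$, it is tangent to $S^3$, so the hypothesis $\bar Z'f=0$ together with $\tilde f|_{S^3}=f$ yields $-w\,\partial_{\bar z}\tilde f+z\,\partial_{\bar w}\tilde f=0$ on $S^3$. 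Viewed as a linear system in $(\partial_{\bar z}\tilde f,\partial_{\bar w}\tilde f)$, the coefficient matrix has determinant $-(|z|^2+|w|^2)=-1$ on $S^3$, hence $\partial_{\bar z}\tilde f=\partial_{\bar w}\tilde f=0$ on $S^3$; the $\mathbb{C}^\ast$-equivariance of $\tilde f$ propagates this to all of $\mathbb{C}^2\setminus\{0\}$. Thus $\tilde f$ is holomorphic on $\mathbb{C}^2\setminus\{0\}$, extends across $0$ by Hartogs, and being a holomorphic function on $\mathbb{C}^2$ which is homogeneous of integer degree $s$ must be a homogeneous polynomial. In particular $s\ge 0$ (else $\tilde f\equiv0$, contradicting $|f|\equiv1$), and one may write
\[
\tilde f(z,w)=\sum_{\alpha=0}^{s}v_\alpha\,z^{s-\alpha}w^\alpha,\qquad v_\alpha\in\mathbb{C}^{n+1};
\]
restriction to $S^3$ recovers $f$.

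Finally, the norm condition $\langle f,\bar f\rangle=1$ on $S^3$ becomes
\[
\sum_{\alpha,\beta}\langle v_\alpha,\overline{v_\beta}\rangle\,z^{s-\alpha}w^\alpha\bar z^{s-\beta}\bar w^\beta=(|z|^2+|w|^2)^s=\sum_\gamma C_\gamma^{\,s}(z\bar z)^{s-\gamma}(w\bar w)^\gamma.
\]
Both sides are polynomials of bidegree $(s,s)$ in $(z,w)\times(\bar z,\bar w)$; a short bigrading argument shows that any bidegree-$(s,s)$ polynomial lying in the ideal generated by $|z|^2+|w|^2-1$ must vanish, so the identity holds as polynomials. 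Comparing coefficients gives $\langle v_\alpha,\overline{v_\beta}\rangle=C_\alpha^{\,s}\delta_{\alpha\beta}$, i.e.\ the $v_\alpha$ are Hermitian-orthogonal with squared norms $C_\alpha^{\,s}$. Setting $\varepsilon_\alpha:=v_\alpha/\sqrt{C_\alpha^{\,s}}$ produces $s+1$ orthonormal vectors in $\mathbb{C}^{n+1}$, forcing $s\le n$; extending them to a unitary basis $\{\varepsilon_0,\ldots,\varepsilon_n\}$ of $\mathbb{C}^{n+1}$ gives the stated form of $f$. The main obstacle is the holomorphic-extension step: one must combine the CR equation, which is a priori valid only on $S^3$, with the $\mathbb{C}^\ast$-equivariance of $\tilde f$ to produce both Cauchy--Riemann equations everywhere, and this hinges on $s\in\mathbb{Z}_{\ge0}$ so that the extension is single-valued.
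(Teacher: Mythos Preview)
The paper does not give its own proof of Lemma~\ref{lem:6.1}; the statement is quoted verbatim from \cite{Li} (Lemma~3.3 there) and used as a black box. So there is no in-paper argument to compare your proposal against.

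Your proof is correct and self-contained. The strategy---integrate $X_1'f=isf$ to get $U(1)$-equivariance and $s\in\mathbb{Z}$, extend $f$ homogeneously to $\mathbb{C}^2\setminus\{0\}$ and upgrade to $\mathbb{C}^*$-equivariance, combine the tangential CR equation $\bar Z'f=0$ with the radial equation coming from $\partial_{\bar\lambda}(\lambda^s\tilde f)=0$ to deduce $\partial_{\bar z}\tilde f=\partial_{\bar w}\tilde f=0$, apply Hartogs to get a homogeneous polynomial (forcing $s\ge0$), and finally read off $\langle v_\alpha,\overline{v_\beta}\rangle=C_\alpha^{\,s}\delta_{\alpha\beta}$ from $|f|^2\equiv1$---is a clean route to the result. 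Two minor remarks: (i) the determinant of $\begin{pmatrix}\bar z&\bar w\\ -w&z\end{pmatrix}$ is $+(|z|^2+|w|^2)$, not its negative, which of course changes nothing; (ii) your ``bigrading argument'' is most quickly justified by scaling: both sides are bihomogeneous of bidegree $(s,s)$, so equality on $S^3$ propagates to all of $\mathbb{C}^2\setminus\{0\}$ via $(z,w)\mapsto(z/r,w/r)$, and the monomials $z^{s-\alpha}w^\alpha\bar z^{\,s-\beta}\bar w^{\,\beta}$ are linearly independent as functions on $\mathbb{C}^2$.
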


Next, we will prove two uniqueness theorems based on preceding
Theorem \ref{thm:4.1}. For that purpose, in sequel we assume that
$\varphi:S^3\rightarrow\mathbb CP^n$ is an equivariant CR minimal immersion with
full dimension $n$.

\begin{theorem}\label{thm:6.1}
If $\varphi(S^3)$ is not a Berger sphere, then up to an inner
automorphism of $SU(2)$ we have $\varphi\sim\varphi_1=[e_0]$, where $e_0$ is
defined by \eqref{eqn:5.4}.
\end{theorem}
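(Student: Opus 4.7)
By Proposition~\ref{prop:4.1} the non-Berger hypothesis forces the full dimension to be $n=2$, and Corollary~\ref{cor:4.1}(i) then supplies a left-invariant orthonormal frame $\{\omega_i\}$ (dual $\{X_1,X_2,X_3\}$) together with a unitary frame $\{e_0,e_1,e_2\}$ of $\underline{\mathbb C}^3$ for which the structure equations (\ref{eqn:4.6}) hold with the explicit coefficients (\ref{eqn:4.11}); in particular the structure constants of $\mathfrak{su}(2)$ relative to $\{X_i\}$ are fully pinned down ($c^1_{23}=-2$, $c^1_{12}=c^1_{13}=0$, $a=3$, $\tau=\sqrt 2$). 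Evaluating (\ref{eqn:4.6})$\,+\,$(\ref{eqn:4.11}) on $X_1, Z, \bar Z$ gives the clean relations
\[
X_1 e_0 = i e_0 + e_1,\qquad Z e_0 = e_2,\qquad \bar Z e_0 = 0,
\]
together with analogous linear formulas for $e_1, e_2$.

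Since Remark~\ref{rem:2.1} ensures $SU(2)$-transitivity on oriented frames and Proposition~\ref{prop:3.1}(iii) lists the $Ad^*$-invariants, a suitable inner automorphism of $SU(2)$ normalises $\{X_i\}$ as a fixed linear combination of the standard frame $\{X'_1,X'_2,X'_3\}$ of (\ref{eqn:2.5}). Under this normalisation $X'_1, Z', \bar Z'$ are explicit $\mathbb C$-linear combinations of $X_1, Z, \bar Z$; iterating $X'_1$ on $e_0$ then yields the key identity $(X'_1)^2 e_0 = -4\,e_0$, whence $e_0$ splits as $e_0 = p + q$ with $X'_1 p = 2ip$ and $X'_1 q = -2iq$. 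The pieces $p, q$ are explicit constant-coefficient combinations of $e_0$ and $e_1$, and in particular their norms $|p|,|q|$ are constant on $S^3$.

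From the commutators $[Z', X'_1] = 2iZ'$ and $[\bar Z', X'_1] = -2i\bar Z'$ (read off (\ref{eqn:2.3})), the images $\bar Z' p$ and $Z' q$ would lie in the $4i$- and $-4i$-eigenspaces of $X'_1$; however $\bar Z' e_0, Z' e_0 \in \mathbb C\cdot e_2$ and $X'_1 e_2 = 0$, so matching eigenspaces forces $\bar Z' p = 0$ and $Z' q = 0$. Thus $p/|p|$ and $\bar q/|\bar q|$ are unit holomorphic $\mathbb C^3$-valued functions each satisfying the hypotheses of Lemma~\ref{lem:6.1} with $s=2$. Lemma~\ref{lem:6.1} then realises each as $\sum_{\alpha=0}^2\sqrt{C_\alpha^2}\,z^{2-\alpha}w^\alpha\,\varepsilon_\alpha = f_0$ in a suitable unitary basis of $\mathbb C^3$. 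A single holomorphic isometry of $\mathbb CP^2$ aligns the two bases into a common one in which $p = |p|\,f_0$ and $q = |q|\,f_2$ with $f_2 = (\bar w^2,-\sqrt 2\,\bar z\bar w,\bar z^2)$, while the norms $|p|,|q|$ are pinned to $\cos\tfrac{\pi}{8}$, $\sin\tfrac{\pi}{8}$ by the structure-constant data in (\ref{eqn:4.11}) together with the unitarity of $\{e_0,e_1,e_2\}$. Substituting back recovers $e_0 = \cos\tfrac{\pi}{8}\,f_0 + i\sin\tfrac{\pi}{8}\,f_2$, which is precisely (\ref{eqn:5.4}).

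The main technical difficulty is this final alignment step: guaranteeing that \emph{one single} holomorphic isometry simultaneously carries both unitary bases produced by Lemma~\ref{lem:6.1} (one for $p$, the other for $\bar q$) to a common basis of $\mathbb C^3$, and that the residual relative phase between $p$ and $q$ is forced to the value $\pi/8$. This will follow from the explicit combinations expressing $p,q$ in terms of $e_0,e_1$ (established in the eigenspace decomposition) together with the orthonormality relations $|e_0|=|e_1|=1$ and $\langle e_0,\bar e_1\rangle=0$.
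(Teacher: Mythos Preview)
Your strategy is sound and mirrors the paper's: both proofs hinge on diagonalising the action of $X'_1$ on $\mathrm{span}\{e_0,e_1\}$ and then invoking Lemma~\ref{lem:6.1}. The paper, however, executes this more directly. Knowing the answer from Section~\ref{sect:5}, it writes down the rotation $\tilde e_0=\cos\tfrac{\pi}{8}\,e_0-i\sin\tfrac{\pi}{8}\,e_1$, $\tilde e_1=-i\sin\tfrac{\pi}{8}\,e_0+\cos\tfrac{\pi}{8}\,e_1$, $\tilde e_2=e_2$ and the explicit change of $1$-forms $\omega'_1=\tfrac{1}{\sqrt2}\omega_1$, $\omega'=\cos\tfrac{\pi}{8}\,\omega-i\sin\tfrac{\pi}{8}\,\bar\omega$, then simply verifies that the resulting equations (6.4) hold and that $\{\omega'_i\}$ satisfies \eqref{eqn:2.3}. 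This bypasses your eigenvalue computation $(X'_1)^2e_0=-4e_0$ and the determination of $|p|,|q|$ (although both are of course equivalent to the $\pi/8$ rotation once one checks that $X'_1=\sqrt2\,X_1$, which follows from $\mu=0$).

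The substantive difference is in how Lemma~\ref{lem:6.1} is used. You apply it \emph{twice}, once to $p/|p|$ and once to $\bar q/|\bar q|$, which produces two a~priori unrelated unitary bases of $\mathbb C^3$; you then face the alignment problem you flag as the ``main technical difficulty.'' The paper avoids this entirely: it applies Lemma~\ref{lem:6.1} \emph{once}, to $\tilde e_0$ (your normalised $p$), obtaining $\tilde e_0=f_0$ in a single chosen basis, and then recovers $\tilde e_2$ and $\tilde e_1$ in that \emph{same} basis via the derivative relations $\tilde e_2=\tfrac{1}{\sqrt2}Z'\tilde e_0=f_1$ and $\tilde e_1=\tfrac{1}{\sqrt2}Z'\tilde e_2=f_2$, which come straight from the clean structure equations (6.4). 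This delivers $\tilde e_1=f_2$ (hence $q=i\sin\tfrac{\pi}{8}\,f_2$ with the phase $i$ already fixed) without any separate appeal to Lemma~\ref{lem:6.1} or any alignment argument. Your orthonormality-based alignment sketch would need exactly this kind of derivative link between $p$ and $q$ to close; using $(Z')^2p\propto f_2$ and the one-dimensionality of the $-2i$-eigenspace is the cleanest route, and is precisely what the paper does.
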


\begin{proof} According to Proposition \ref{prop:4.2}, we have $n=2$.
Moreover, by Corollary \ref{cor:4.1} (i), we have a unitary frame
$\{e_0,e_1,e_2\}$ of $\underline{\mathbb C}^3$ such that $\varphi=[e_0]$ and
\begin{equation}\label{eqn:6.1}
\left\{
\begin{aligned}
de_0&=i \omega_1  e_0 +\omega_1e_1 +\omega e_2,\\
de_1&=- \omega_1  e_0- i\omega_1e_1+(i\omega+\sqrt2\bar\omega) e_2,\\
de_2&=-\bar\omega e_0 -(\sqrt2\omega-i\bar\omega)e_1.
\end{aligned}
\right.
\end{equation}

Now, we take another unitary frame $\{\tilde e_0,\tilde e_1,\tilde e_2\}$ of
$\underline{\mathbb C}^3$ by
\begin{equation}\label{eqn:6.2}
\tilde e_0=\cos\tfrac\pi8\;e_0-i\sin\tfrac\pi8\;e_1,\ \ \tilde
e_1=-i\sin\tfrac\pi8\;e_0+\cos\tfrac\pi8\;e_1,\ \ \tilde e_2=e_2,
\end{equation}
or, equivalently,
\begin{equation}\label{eqn:6.3}
e_0=\cos\tfrac\pi8\;\tilde e_0+i\sin\tfrac\pi8\;\tilde e_1,\ \
e_1=i\sin\tfrac\pi8\;\tilde e_0+\cos\tfrac\pi8\;\tilde e_1,\ \ e_2=\tilde
e_2.
\end{equation}

By setting $\omega'_1=\frac1{\sqrt{2}}\omega_1$,\
$\omega'=\omega'_2+i\omega'_3=\cos\frac\pi8\omega-i\sin\frac\pi8\bar\omega$, or
equivalently,
$$
\omega'_1=\tfrac1{\sqrt{2}}\omega_1,\ \
\omega'_2=\cos\tfrac\pi8\,\omega_2-\sin\tfrac\pi8\,\omega_3,\ \
\omega'_3=-\sin\tfrac\pi8\,\omega_2+\cos\tfrac\pi8\,\omega_3,
$$
then, using that $\cos\frac\pi8+\sin\frac\pi8=\sqrt2\cos\frac\pi8$
and $\cos\frac\pi8-\sin\frac\pi8=\sqrt2\sin\frac\pi8$, we obtain
$2\omega'_1\wedge\omega'_2\wedge\omega'_3=\omega_1\wedge\omega_2\wedge\omega_3$. This
shows that $\{\omega'_1,\omega'_2,\omega'_3\}$ is a basis of
$\mathfrak{su}(2)^*$.

Next, direct calculations show that
\begin{equation}\label{eqn:6.4}
\left\{
\begin{aligned}
d\tilde e_0&=2i\omega_1'\tilde e_0 + \sqrt2\omega' \tilde e_2,\\
d\tilde e_1&=-\ 2i\omega'_1 \tilde e_1\,- \sqrt2\bar\omega'\tilde e_2,\\
d\tilde e_2&=-\sqrt2\bar\omega'\tilde e_0 +\sqrt2\omega' \tilde e_1.
\end{aligned}
\right.
\end{equation}

On the other hand, exterior differentiations of \eqref{eqn:6.3} give
that $d\omega'_1=i\omega'\wedge\bar\omega'$, $d\omega'=2i\omega'_1\wedge\omega'$.
Hence $\{\omega'_i\}$ satisfies \eqref{eqn:2.3}. By Lemmas
\ref{lem:2.1} and \ref{lem:2.2}, up to an inner automorphism of
$SU(2)$ we may assume that the dual frame $\{X'_i\}$ of $\{\omega'_i\}$
is defined by \eqref{eqn:2.5}, or equivalently by \eqref{eqn:5.1}.
Then we have
\begin{equation}\label{eqn:6.5}
\bar Z'\tilde e_0=0,\ \  X'_1\tilde e_0=2i\tilde e_0,\ \ Z'\tilde e_0=\sqrt2\tilde
e_2,\ \ Z'\tilde e_2=\sqrt2\tilde e_1.
\end{equation}

Using Lemma \ref{lem:6.1} we then have
$$
\tilde e_0=z^2\varepsilon_0+\sqrt2zw\varepsilon_1+w^2\varepsilon_2,\ \ (z,w)\in S^3,
$$
where $\{\varepsilon_0,\varepsilon_1,\varepsilon_2\}$ is a unitary basis of $\mathbb C^3$. By
re-choosing a basis of $\mathbb C^3$, or equivalently by a holomorphic
isometry $A:{\mathbb C^3}\rightarrow{\mathbb C^3}$, we may set
$$
\tilde e_0=\big(z^2,\sqrt2zw,w^2\big).
$$
Then, by \eqref{eqn:6.5}, we have
$$
\tilde e_2=\tfrac1{\sqrt2}Z'\tilde e_0=(-\sqrt2z\bar w,z\bar z-w\bar
w,\sqrt2w\bar z),\ \tilde e_1=\tfrac1{\sqrt2}Z'\tilde e_2=(\bar
w^2,-\sqrt2\bar z\bar w,\bar z^2).
$$
It follows from \eqref{eqn:6.3} that
$$
e_0=\cos\tfrac\pi8\,(z^2,\sqrt{2}zw,w^2)+i\sin\tfrac\pi8\,(\bar{w}^2,-\sqrt{2}\bar{z}\bar{w},\bar{z}^2).
$$

This completes the proof of Theorem \ref{thm:6.1}.
\end{proof}

\begin{theorem}\label{thm:6.2}
If $\varphi(S^3)$ is a Berger sphere with $\nabla^\bot\xi_0=0$ {\rm
(}resp. with $\nabla^\bot\xi_0\ne0${\rm )}, then, up to an inner
automorphism of $SU(2)$, we have $\varphi\sim\varphi_2=[e_0]$ and $e_0$ is
defined by \eqref{eqn:5.20} {\rm(}\,resp. $\varphi\sim\varphi_3=[e_0]$ and
$e_0$ is defined by \eqref{eqn:5.7} and \eqref{eqn:5.8} with some
$\ell>0${\rm)}.
\end{theorem}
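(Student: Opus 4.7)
\textbf{Proof Plan for Theorem \ref{thm:6.2}.}
Throughout the plan, assume $\varphi(S^3)$ is a Berger sphere. By Lemma \ref{lem:4.2} we may start from a normalized frame $\{\omega_i\}$ together with the unitary frame $\{e_0,e_1,e_2,e_3,\dots\}$ for which the first two structure equations have the explicit form \eqref{eqn:4.16}, with parameters $b,c>0$ and $\lambda_3=\sqrt{1-3b^2c}\ge 0$. Passing to the standard-metric forms $\omega'_1=(\sqrt c/b)\,\omega_1$ and $\omega'=\sqrt c\,\omega$, the forms $\{\omega'_i\}$ satisfy \eqref{eqn:2.3}, so by Lemmas \ref{lem:2.1} and \ref{lem:2.2} we may, after an inner automorphism of $SU(2)$, identify $\{\omega'_i\}$ with the canonical coframe of Sect.\ \ref{sect:2.1}; this puts us in the setting where Lemma \ref{lem:6.1} applies.

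The core idea is to imitate the proof of Theorem \ref{thm:6.1}: rotate the first two frame vectors by a suitable angle $t\in(0,\pi/2)$ and set
\[
\tilde e_0:=\cos t\,e_0-i\sin t\,e_1,\qquad \tilde e_1:=-i\sin t\,e_0+\cos t\,e_1,
\]
and choose $t$ so that $d\tilde e_0$ has no $\tilde e_1$-component. A direct computation of $d\tilde e_0$ using \eqref{eqn:4.16} shows that this single condition is equivalent to one scalar equation in $t$ with $b\sqrt c$ as the only parameter, namely $\cot 2t=b\sqrt c$, which determines $t\in(0,\pi/2)$ uniquely. With this $t$, the same computation forces $d\tilde e_1$ to have no $\tilde e_0$-component as well, and produces two eigenvalue relations
\[
X'_1\tilde e_0=ik\tilde e_0,\qquad X'_1\tilde e_1=i\ell\tilde e_1,
\]
for explicit real numbers $k,\ell$ depending on $b,c,t$. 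The Hopf-equivariance $\varphi\circ\text{(right translation)}=\pi\circ E$ forces $\tilde e_0,\tilde e_1$ to be polynomial in $(z,w,\bar z,\bar w)$, so Lemma \ref{lem:6.1} forces $k,\ell$ to be non-negative integers with $k\ge\ell\ge 0$.

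Case 1 ($\lambda_3=0$, equivalently $\ell=0$). Here $3b^2c=1$, so $t$ and $k$ are completely determined. Lemma \ref{lem:6.1} gives, after a holomorphic isometry of $\mathbb CP^n$, the explicit formula $\tilde e_0=\sum_\alpha\sqrt{C_\alpha^k}z^{k-\alpha}w^\alpha\varepsilon_\alpha$, while $\tilde e_1=\varepsilon_n$ (a constant unit vector, since $\ell=0$). Inverting the rotation recovers $e_0=\cos t\,\tilde e_0+i\sin t\,\tilde e_1$, which, after verifying $t=\pi/6$ from $\cot 2t=b\sqrt c=1/\sqrt3$, is exactly \eqref{eqn:5.20}. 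The linear fullness forces $k=n-1$, giving $\varphi\sim\varphi_2$.

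Case 2 ($\lambda_3>0$, equivalently $\ell>0$). Here I also rotate $e_2,e_3$ (by the same angle $t$, a fact that will fall out of the computation of the $\omega'$-coefficients) to produce $\tilde e_2,\tilde e_3$, and then continue inductively, applying $Z'$ as in Lemma \ref{lem:5.1} to generate the remaining basis vectors $\tilde e_4,\dots,\tilde e_n$ out of the two sequences $Z'^{\alpha}\tilde e_0$ and $Z'^{\alpha'}\tilde e_1$. Lemma \ref{lem:6.1} then gives $\tilde e_0=\sum_\alpha\sqrt{C_\alpha^k}z^{k-\alpha}w^\alpha\varepsilon_\alpha$ and $\tilde e_1=\sum_{\alpha'}\sqrt{C_{\alpha'}^\ell}z^{\ell-\alpha'}w^{\alpha'}\varepsilon'_{\alpha'}$ in some unitary basis of $\mathbb C^{n+1}=\mathbb C^{k+1}\oplus\mathbb C^{\ell+1}$, and linear fullness forces $k+\ell+1=n$. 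Undoing the rotation produces the formula \eqref{eqn:5.7}, so $\varphi\sim\varphi_3$.

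The main obstacle is the verification in Case 2 that the parameters $k,\ell,t$ obtained from $b,c,\lambda_3$ automatically satisfy the minimality relation \eqref{eqn:5.8}. This is a consistency check: the defining relations $\cot 2t=b\sqrt c$, $\lambda_3^2=1-3b^2c$, together with the explicit dependence of $k,\ell$ on $b,c,t$, should collapse — after some manipulation using $\sin 2t=1/\sqrt{1+b^2c}$ — to the identity $\tan^2 t=2k/[3(k-\ell)+\sqrt{(k+\ell)^2+8(k-\ell)^2}]$. A secondary bookkeeping step is checking that the inductive generation of $\tilde e_4,\dots,\tilde e_n$ by repeated application of $Z'$ in Lemma \ref{lem:5.1} is compatible with the unitary frame produced by the Gauss--Weingarten formulae, but this is essentially the content of Lemma \ref{lem:5.1} applied separately to the $f$- and $h$-sequences of Sect.\ \ref{sect:5.2}.
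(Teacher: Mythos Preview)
Your approach is essentially the same as the paper's: pass to the standard coframe via \eqref{eqn:6.8}, diagonalize the $X'_1$-action on ${\rm span}\{e_0,e_1\}$ by a rotation through an angle $t$ (your condition $\cot 2t=b\sqrt c$ is equivalent to the paper's \eqref{eqn:6.15}), and then invoke Lemma \ref{lem:6.1} to get the polynomial form of $\tilde e_0$ and $\tilde e_1$. Two simplifications are worth noting. First, in Case 2 you do not need to rotate $e_2,e_3$ or inductively regenerate the full frame: once $\tilde e_0$ and $\tilde e_1$ are identified via Lemma \ref{lem:6.1}, inverting the rotation already gives $e_0$ in the form \eqref{eqn:5.7}, and that is all that is required for $\varphi\sim\varphi_3$. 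Second, and more importantly, your ``main obstacle'' evaporates: once $e_0$ is in the form \eqref{eqn:5.7} for some $t$, Proposition \ref{prop:5.2}(iii) says this immersion is minimal \emph{iff} $t$ satisfies \eqref{eqn:5.8}; since $\varphi$ is minimal by hypothesis, \eqref{eqn:5.8} follows immediately with no algebraic verification needed. The paper uses exactly this shortcut (via Remark \ref{rem:5.1}(c)).
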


\begin{proof} Since $\varphi(S^3)$ is a Berger sphere, according to Corollary \ref{cor:4.1} (ii) and Lemma \ref{lem:4.2},
regardless of $n=2$ or $n\ge3$, we have a normalized frame
$\{\omega_i\}$ and a unitary frame $\{e_0,e_A\}$ such that $\varphi=[e_0]$
and
\begin{equation}\label{eqn:6.6}
d\omega_1=2b\sqrt c\omega_2\wedge\omega_3,\ \ d\omega_2=\tfrac{2\sqrt
c}b\omega_3\wedge\omega_1,\ \ d\omega_3=\tfrac{2\sqrt c}b\omega_1\wedge\omega_2;
\end{equation}
\begin{equation}\label{eqn:6.7}
\left\{
\begin{aligned}
de_0=&\tfrac i{b\sqrt c}\omega_1e_0+\omega_1e_1+\omega\,e_2, \\
de_1=&-\omega_1e_0+\tfrac{i(1-2b^2c)}{b\sqrt
c}\omega_1e_1+ib\sqrt{c}\omega\,e_2\,+\lambda_3\omega\,e_3,
\end{aligned}
\right.
\end{equation}
where $b,c$ are positive real numbers with $b=\tfrac12, c=\tfrac43$
if $n=2$; $\lambda_3=\sqrt{1-3b^2c}\ge0$ and that $\lambda_3=0$ if and only
if $\nabla^\bot\xi_0=0$.

If we set
\begin{equation}\label{eqn:6.8}
\omega'_1=\tfrac{\sqrt c}b\kern2pt\omega_1,\ \ \omega'_2=\sqrt
c\kern2pt\omega_2,\ \ \omega'_3=\sqrt c\kern2pt\omega_3,
\end{equation}
then $\{\omega'_i\}$ is a basis of $\mathfrak{su}(2)^*$. Using
\eqref{eqn:6.6} we see that $\{\omega'_i\}$ satisfy \eqref{eqn:2.3}. As
before, up to an inner automorphism of $SU(2)$, we may assume that
the dual frame $\{X'_i\}$ of $\{\omega'_i\}$ is defined by
\eqref{eqn:5.1}.

From \eqref{eqn:6.7} and \eqref{eqn:6.8} we get
\begin{equation}\label{eqn:6.9}
\left\{
\begin{aligned}
de_0=& \tfrac ic \omega'_1e_0+\tfrac b{\sqrt c}\omega'_1e_1+\tfrac 1{\sqrt c}\omega'e_2, \\
de_1=&-\tfrac b{\sqrt c}\omega'_1e_0+\tfrac{i(1-2b^2c)}c\omega'_1e_1+ib
\omega'e_2\,+\tfrac{\lambda_3}{\sqrt c}\,\omega'e_3,
\end{aligned}
\right.
\end{equation}
where $\omega'=\omega'_2+i\omega'_3$. Hence we have
\begin{equation}\label{eqn:6.10}
\bar Z'e_0=0,\ \ \bar Z'e_1=0,
\end{equation}
\begin{equation}\label{eqn:6.11}
X'_1e_0=ic^{-1}e_0+ bc^{-1/2}e_1, \ \ X'_1e_1=bc^{-1/2}
e_0+i(c^{-1}-2b^2)e_1.
\end{equation}

Let $V$ be the subbundle of $\underline{\mathbb C}^{n+1}$ spanned by $\{e_0,e_1\}$.
Then \eqref{eqn:6.11} defines an endomorphism ${X_1}:V\rightarrow V$. It is
skew-Hermitian, and therefore has two imaginary eigenvalues
$ik,i\ell$ such that
\begin{equation}\label{eqn:6.12}
k=c^{-1}-b^2+b\sqrt{c^{-1}+b^2},\ \
\ell=c^{-1}-b^2-b\sqrt{c^{-1}+b^2}.
\end{equation}

Since $k>c^{-1}>0$ and $k\ell=c^{-2}(1-3b^2c)\ge0$, we see that
$k>\ell\ge0$, and $\ell=0$ if and only if $\lambda_3=0$, or
equivalently, $\nabla^\bot\xi_0=0$.

It is well known that there is a unitary frame $\{f,h\}$ of $V$ such that
\begin{equation}\label{eqn:6.13}
X'_1f=ik\kern2pt f, \ \ X'_1h=i\ell\kern2pt h.
\end{equation}

Since by theory of linear algebra $(X'_1-ik\kern2pt
I)(X'_1-i\ell\kern2pt I)=0$, and, by \eqref{eqn:6.11} and
\eqref{eqn:6.12}, we have
$$
(X'_1-i\ell\kern2pt I)e_0=ib[(b+\sqrt{c^{-1}+b^2})e_0-ic^{-1/2}e_1],
$$
$$
(X'_1-ik\kern2pt I)e_0=b[-i(\sqrt{c^{-1}+b^2}-b)e_0+c^{-1/2}e_1].
$$

We may choose $\{f,h\}$ such that
\begin{equation}\label{eqn:6.14}
f=\cos t\kern2pt e_0-i\sin t\kern2pt e_1,\ \ h=-i\sin t\kern2pt
e_0+\cos t\kern2pt e_1,
\end{equation}
where $t\in(0,\pi/2)$ is determined by
\begin{equation}\label{eqn:6.15}
\tan t=c^{-1/2}/(b+\sqrt{c^{-1}+b^2})=\sqrt{1+b^2c}-b\sqrt c.
\end{equation}

Then, $|f|=|h|=1$, $\langle f,{\bar h}\rangle=0$. Moreover, by
\eqref{eqn:6.10} and \eqref{eqn:6.13}, we get
$$
\bar Z'f=\bar Z'h=0, \ \ X'_1f=ik\kern1pt f, \ \ X'_1h=i\ell\kern1pt
h.
$$
Since $n$ is the full dimension, applying Lemma \ref{lem:6.1} we see
that $k,\ell$ are integers with $k>\ell\ge0$ and $k+\ell=n-1$.
Moreover, there is a unitary basis
$\{\varepsilon_0,\ldots,\varepsilon_k,\varepsilon'_0,\ldots,\varepsilon'_\ell\}$ of $\mathbb C^{n+1}$
such that
$$
f=\sum_{\alpha=0}^k\sqrt{C^k_\alpha}\,z^{k-\alpha}w^\alpha\varepsilon_\alpha,\ \
h=\sum_{\alpha'=0}^\ell\sqrt{C^\ell_{\alpha'}}\,z^{\ell-\alpha'}w^{\alpha'}\varepsilon'_{\alpha'}.
$$
Thus, by \eqref{eqn:6.14}, we can solve $e_0$ to obtain that
\begin{equation}\label{eqn:6.16}
e_0=\cos t\sum_{\alpha=0}^k\sqrt{C^k_\alpha}z^{k-\alpha}w^\alpha\varepsilon_\alpha +i\sin
t\sum_{\alpha'=0}^\ell\sqrt{C^\ell_{\alpha'}}z^{\ell-\alpha'}w^{\alpha'}\varepsilon'_{\alpha'}.
\end{equation}

If $\nabla^\bot\xi_0=0$, then $\lambda_3=0$ and thus $b=1/\sqrt{3c}$.
Hence $\ell=0$, $t=\pi/6$. Then $\varphi\sim\varphi_2$, due to that
\eqref{eqn:6.16} differs from \eqref{eqn:5.20} by a holomorphic
isometry of $\mathbb C^{n+1}$.

If $\nabla^\bot\xi_0\ne0$, then, by \eqref{eqn:6.12}, $k>\ell>0$.
Since $\varphi$ is minimal, according to $(c)$ of Remark 5.1, the
parameter $t$ determined by \eqref{eqn:6.15} has to be that one
determined by \eqref{eqn:5.8}. Then, using \eqref{eqn:6.12} and
\eqref{eqn:6.15}, one gets
$$
(2k)/[3(k-\ell)+\sqrt{(k+\ell)^2+8(k-\ell)^2}]=1+2b^2c-2bc\sqrt{c^{-1}+b^2}=\tan^2t.
$$

Now, by comparing \eqref{eqn:6.16} with \eqref{eqn:5.7}, we come to
the conclusion $\varphi\sim\varphi_3$.
\end{proof}

\vskip 1mm \noindent{\bf Completion of proof of Theorem
\ref{thm:1.1}}.

Let $\varphi: {S^3}\rightarrow{\mathbb C}P^{n}\ (n\ge2)$ be a
linearly full equivariant CR minimal immersion with induced metric
$ds^2$. If  $(S^3,ds^2)$ is not a Berger sphere, then according to
Theorem \ref{thm:6.1}, we get the assertion (1). If on the other
hand  $(S^3,ds^2)$ is a Berger sphere, then Theorem \ref{thm:6.2}
shows that for both cases of $\nabla^\bot\xi_0=0$ and
$\nabla^\bot\xi_0\ne0$, the expression of $\varphi$ assume the form
as stated in the assertion (2), where $\nabla^\bot\xi_0=0$
corresponds to $\ell=0$, and $\nabla^\bot\xi_0\ne0$ corresponds to
an integer $\ell>0$. \qed



\begin{thebibliography}{99}

\bibitem{Be}
Bejancu, A.: {\it Gemetry of CR-submaniflds}, D.~Reidel Publishing
Company, Dordrecht, 1986.

\bibitem{B-J-R-W}
Bolton, J., Jensen, G. R., Rigoli, M., Woodward, L. M.: {\it On
conformal minimal immersions of $\mathbb{S}^2$ into $\mathbb CP^n$},
Math. Ann. {\bf 279}, 599-620 (1988).

\bibitem{C-D-V-V}
Chen, B.-Y., Dillen, F., Verstraelen, L., Vrancken, L.: {\it An
exotic totally real minimal immersions of $S^3$ in $\mathbb CP^3$
and its characterization}, Proc. Roy. Soc. Edinburgh, Sect. A, {\bf
126}, 153-165 (1996).

\bibitem{C-D-V-V-1}
Chen, B.-Y., Dillen, F., Verstraelen, L., Vrancken, L.: {\it
Lagrangian isometric immersions of a real space form $M^n(c)$ into
complex space form $\widetilde{M}^n(c)$}, Math. Proc. Cambridge Philos.
Soc. {\bf 124}, 107-125 (1998).

\bibitem{DLVW}
Dillen, F., Li, H., Vrancken, L., Wang, X.: {\it Lagrangian
submanifolds in complex projective space with parallel second
fundamental form}, Pacific J. Math. \textbf{255}, 79-115 (2012).

\bibitem{DSA}
Dragomir S., Shahid M.H., F.R. Al-Solamy: {\it Geometry of
Cauchy-Riemann submanifolds}, Springer, Singapore, 2016.

\bibitem{F-P}
Fei, J., Peng, C.-K., Xu, X.-W.: {\it Equivariant totally real
$3$-spheres in the complex projective space $\mathbb{C}P^n$}, Diff.
Geom. Appl. {\bf 30}, 262-273 (2012).

\bibitem{H-Li}
Hu, S., Li, K.: {\it The minimal $S^3$ with constant sectional
curvature in $\mathbb{C}P^n$}, J. Aust. Math. Soc. {\bf 99}, 63-75
(2015).

\bibitem{HLW}
Hu, Z., Lyu, D.-L. Wang, J.: {\it On rigidity phenomena of compact
surfaces in homogeneous $3$-manifolds}, Proc. Amer. Math. Soc. {\bf
143}, 3097-3109 (2015).

\bibitem{J-L}
Jenson, G.-R., Liao, R.: {\it Families of flat minimal tori in
$\mathbb{C}P^n$}, J. Differ. Geom. {\bf 42}, 113-132 (1995).

\bibitem{K-R}
Kim, H.-S., Ryan, P.-J.: {\it A classification of pseudo-Einstein
hypersurfaces in $\mathbb{C}P^2$}, Differential Geom. Appl. {\bf 26}
(2008), 106-112.

\bibitem{Li}
Li, Z.-Q.: {\it Minimal $S^3$ with constant curvature in $\mathbb
CP^n$}, J. London Math. Soc. {\bf 68}, 223-240 (2003).

\bibitem{L-H}
Li, Z.-Q., Huang, A.-M.: {\it Constant curved minimal CR $3$-spheres
in $\mathbb CP^n$}, J. Aust. Math. Soc. {\bf 79}, 1-10 (2005).

\bibitem{L-P}
Li, Z.-Q., Peng, J.-W.: {\it Rigidity of $3$-dimensional minimal
CR-submanifolds with constant curvature in $\mathbb{C}P^n$}, Far
East J. Math. Sci. {\bf 34}, 303-315 (2009).

\bibitem{L-T}
Li, Z.-Q. Tao, Y.-Q.: {\it Equivariant Lagrangian minimal $S^3$
in $\mathbb CP^3$}, Acta Math. Sinica (Engl. Ser.) {\bf 22},
1215-1220 (2006).

\bibitem{L-V-W}
Li, H., Vrancken, L., Wang, X.: {\it A new characterizaiton of the
Berger sphere in complex projective space}, J. Geom. Phys., {\bf
92}, 129-139 (2015).

\bibitem{Ma}
Mashimo, K.: {\it Minimal immersions of $3$-dimensional sphere into
spheres}, Osaka J. Math. {\bf 21}, 721-732 (1984).

\bibitem{Mil}
Milnor, J.: {\it Curvatures of left invariant metrics on Lie
groups}, Adv. Math., {\bf 21}, 293-329 (1976).

\bibitem{Na}
Naitoh, H.: {\it Isotropic submanifolds with parallel second
fundamental form in $P^m(c)$}, Osaka J. Math. {\bf 18}, 427-464
(1981).

\bibitem{N1}
Naitoh, H.: {\it Totally real parallel submanifolds in $P^n(c)$},
Tokyo J. Math. \textbf{4}, 279-306 (1981).

\bibitem{N2}
Naitoh, H.: {\it Parallel submanifolds of complex space forms}, I,
Nagoya Math. J. \textbf{90} (1983), 85-117; II, ibid, \textbf{91}
(1983), 119-149.

\bibitem{O}
Ogiue, K.: {\it Differential geometry of K\"aehler submanifolds},
Adv. Math., {\bf 13}, 73-114 (1974).

\bibitem{T} Torralbo, F.: {\it Compact minimal surfaces in the
Berger spheres}, Ann. Global Anal. Geom. \textbf{41}, 391-405
(2012).

\end{thebibliography}
\end{document}